\newtheorem{theorem}{Theorem}
\newtheorem{claim}{Claim}
\newtheorem{fact}{Fact}
\newtheorem{problem}{Problem}
\newtheorem{problems}{Problems}
\newtheorem{question}{Question}
\newtheorem{remark}{Remark}
\newtheorem{lemma}[theorem]{Lemma}
\newtheorem{proposition}[theorem]{Proposition}
\newtheorem{corollary}[theorem]{Corollary}
\newtheorem{example}{Example}
\newtheorem{definition}{Definition}
\newcommand{\fac}{Fac}
\newcommand{\age}{Age}
\newcommand{\NN}{{\mathbb N}}
\newcommand{\ZZ}{{\mathbb Z}}
\author{Maurice Pouzet\affiliationmark{1,2}
  \and Imed Zaguia\affiliationmark{3,4}\thanks{Corresponding author. Supported by Canadian Defence Academy Research Program, NSERC and LABEX MILYON (ANR-10-LABX-0070) of Universit\'e de Lyon within the program ''Investissements d'Avenir (ANR-11-IDEX-0007'' operated by the French National Research Agency (ANR) }}
\title[Long paths in graphs]{Graphs containing finite induced paths of  unbounded length}
\affiliation{
Universit\'e Claude-Bernard Lyon 1, CNRS UMR 5208, Institut Camille Jordan, Villeurbanne, France\\
Department of Mathematics and Statistics, University of Calgary, Calgary, Alberta, Canada\\
Department of Mathematics and Computer Science, Royal Military College of Canada, Kingston, Ontario, Canada \\
Department of Mathematics and Statistics, Queen's University, Kingston, Ontario, Canada }
\keywords{(partially) ordered set; incomparability graph; graphical distance; isometric subgraph, paths, well quasi order, symbolic dynamic, sturmian words, uniformly recurrent sequences}
\begin{document}
\publicationdetails{23}{2021}{2}{3}{6915}
\maketitle
\begin{abstract}
  The age $\mathcal{A}(G)$ of a graph $G$ (undirected and without loops) is the collection of finite induced subgraphs of $G$, considered up to isomorphy and ordered by embeddability. It is well-quasi-ordered (wqo) for this order if it contains no infinite antichain. A graph is \emph{path-minimal} if it contains finite induced paths of unbounded length and every induced subgraph $G'$ with this property embeds $G$. We construct $2^{\aleph_0}$ path-minimal graphs whose ages are pairwise incomparable with set inclusion and which are wqo. Our construction is based on uniformly recurrent sequences and lexicographical sums of labelled graphs.
\end{abstract}

%
%



\section{Introduction and presentation of the results}

We consider graphs that are undirected, simple and have no loops. Among those with finite induced paths of unbounded  length, we ask which ones are \emph{unavoidable}. If a graph is an infinite path, it can be avoided in the following sense: it contains a direct sum of finite paths of unbounded length. This latter one, on the other hand, cannot be avoided. Indeed, two direct sums of finite paths of unbounded length embed in each other. Similarly, two complete sums of  finite paths of unbounded length embed in each other. Hence, the direct sum, respectively the complete sum of finite paths of unbounded length are, in our sense, unavoidable. Are there other examples? This question is the motivation behind this article.

We recall that the \emph{age}  of a graph $G$ is the collection  $\age(G)$ of finite induced subgraphs of $G$, considered up to isomorphy and ordered by embeddability (a graph $H$ \emph{embeds} in a graph $H'$ if $H$ is isomorphic to an induced subgraph of $H'$. cf. \cite{fraissetr}).  It is \emph{well-quasi-ordered} (wqo) for this order if it contains no infinite antichain. A \emph{path} is a graph $P$ such that there exists a one-to-one map $f$ from the set $V(P)$ of its vertices into an interval $I$ of the chain $\ZZ$ of integers in such a way that $\{u,v\}$ belongs to $E(P)$, the set of edges of $P$,  if and only if $|f(u)-f(v)|=1$ for every $u,v\in V(P)$.  If $I=\{1,\dots,n\}$, then we denote that path by $P_n$; its \emph{length} is $n-1$, so, if $n=2$, $P_2$ is made of a single edge, whereas if $n=1$, $P_1$ is a single vertex. We denote by $P_{\infty}$ the path on $\NN$. We say that a graph $G$ is \emph{path-minimal} if  it contains induced paths of finite unbounded length and every induced subgraph $G'$ of $G$ with this  property embeds a copy of $G$. Let $\oplus_n P_n$ respectively $\sum_nP_n$ be the direct sum, respectively the complete sum of paths $P_n$ (these operations are a particular case of lexicographical sum defined in Subsection \ref{subsection:lexicographicsum}). These graphs are  path-minimal graphs.  There are others.
Our main result is this.

\begin{theorem}\label{thm:uncoutable ages}
There are $2^{\aleph_0}$ path-minimal graphs whose ages are pairwise incomparable and wqo for embeddability.
\end{theorem}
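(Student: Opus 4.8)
The plan is to set up a dictionary between uniformly recurrent sequences and graphs, so that recurrence of the sequence becomes minimality of the graph and control of the factor language becomes control of the age. Fix a finite alphabet $A$ and, to each $s\in A^{\NN}$, associate a graph $G_s$ realized as a lexicographical sum indexed by the infinite path $P_{\infty}$: each vertex $n$ of $P_{\infty}$ is replaced by a fixed finite labelled gadget recording the letter $s(n)$, the gadgets being designed once and for all so that (i) the underlying path survives, giving induced copies of every $P_n$, and (ii) a \emph{connected} finite induced subgraph of $G_s$ is determined up to isomorphy by the finite factor of $s$ read along a block of consecutive surviving positions, while a general finite induced subgraph is a disjoint union of such connected pieces (no edges join gadgets at non-consecutive positions of $P_{\infty}$). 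The crux of (ii) is that embeddability between connected finite induced subgraphs mirrors the \emph{factor} (contiguous-substring) order on $L(s)$, the set of finite factors of $s$; in particular a factor of $s$ that is not a factor of $t$ yields a connected finite induced subgraph of $G_s$ that does not embed into $G_t$, so $\age(G_s)$ faithfully encodes $L(s)$.

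Granting this coding, I would prove wqo of $\age(G_s)$ directly from uniform recurrence. First, the factor order on $L(s)$ has no infinite antichain: given an infinite set $S\subseteq L(s)$, there are finitely many factors of each length, so $S$ contains words of arbitrarily large length; pick any $w\in S$ of length $m$, let $N$ be a window length witnessing uniform recurrence of $w$, and pick $w'\in S$ with $|w'|\ge N$; then $w'$, being a factor of $s$ of length at least $N$, contains an occurrence of $w$, so $w\le w'$ in the factor order. Thus the connected pieces are wqo under embeddability, and since an arbitrary finite induced subgraph is a disjoint union of such pieces, Higman's lemma applied to finite sequences of pieces upgrades this to wqo of all of $\age(G_s)$. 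Note that uniform recurrence is essential precisely here: over a general finite alphabet the factor order is far from wqo, and it is recurrence (bounded gaps between occurrences) that kills antichains.

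Next I would establish path-minimality of each $G_s$ for $s$ uniformly recurrent. That $G_s$ has induced paths of unbounded length is immediate from (i). For the minimality, let $G'$ be an induced subgraph of $G_s$ with induced paths of unbounded length. A long induced path in $G'$ forces a long block of consecutive surviving gadgets, hence a long factor of $s$ to be legible inside $G'$; by uniform recurrence every factor of $s$, and so by (ii) every finite induced subgraph of $G_s$, reappears inside any sufficiently long block, in particular inside the blocks witnessed by the unbounded paths of $G'$. Using that the prefixes of $s$ themselves recur with bounded gaps, I would assemble an increasing chain of partial embeddings of the initial segments of $G_s$ into $G'$ and take their union to embed all of $G_s$ into $G'$. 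This is the graph-theoretic shadow of minimality of the subshift generated by $s$, and it is the step I expect to be the main obstacle: one must verify that the reading of long paths in $G'$ is faithful (no spurious edges, no collapsed gadgets, correct boundary behaviour) so that the local factor data is genuinely recoverable and the partial embeddings cohere into a single global one.

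Finally, to produce the continuum of examples, I would invoke a family $(s_i)_{i\in I}$ with $|I|=2^{\aleph_0}$ of uniformly recurrent sequences over a fixed finite alphabet whose factor languages $L(s_i)$ are pairwise incomparable under inclusion; Sturmian words of distinct irrational slopes, suitably coded, supply such a continuum-sized family. By the last remark of the construction, $L(s_i)\not\subseteq L(s_j)$ provides a finite induced subgraph of $G_{s_i}$ absent from $\age(G_{s_j})$, so $\age(G_{s_i})\not\subseteq\age(G_{s_j})$, and symmetrically; pairwise incomparability of the languages thus gives pairwise incomparability of the ages. Combining the three steps, the graphs $G_{s_i}$ are $2^{\aleph_0}$ path-minimal graphs with pairwise incomparable ages, each wqo for embeddability, which is exactly the assertion of the theorem.
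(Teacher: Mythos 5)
Your construction fails at the path-minimality step, and the failure is structural, not a matter of missing details. The graph $G_s$ you build is a single infinite \emph{connected} graph: gadgets sit at the vertices of $P_\infty$, consecutive gadgets are joined by edges (this is what makes ``the underlying path survive''), and by your own stipulation no other cross edges exist. Now take the induced subgraph $G'$ of $G_s$ obtained by keeping, for each $k$, the gadgets at positions in a window $[n_k,\, n_k+k]$, the windows being pairwise separated by gaps of at least two positions. Then $G'$ contains induced paths of unbounded length (one of length about $k$ in the $k$-th window), but every connected component of $G'$ is finite, so the infinite connected graph $G_s$ cannot embed into $G'$; hence $G_s$ is not path-minimal. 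This is exactly the phenomenon the paper flags in its opening paragraph: an infinite path ``can be avoided'' because it contains a direct sum of finite paths of unbounded length which does not embed it back, and any decorated version of $P_\infty$ inherits the same defect. Concretely, your plan to ``assemble an increasing chain of partial embeddings of the initial segments of $G_s$ into $G'$ and take their union'' cannot work: each initial segment of $G_s$ is connected, so its image under any embedding must lie inside a single (finite) component of $G'$, and embeddings landing in different, mutually disconnected components never extend one another, so the chain cannot cohere.

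The paper's construction is designed precisely to dodge this obstruction. Instead of decorating one copy of $P_\infty$, it forms the lexicographical sum, indexed by the chain $\omega$, of infinitely many copies of the labelled path $P_u$, with edges \emph{between} copies governed by an operation $\star$ on the labels, and then restricts each copy to a finite interval so that the resulting components are finite paths of unbounded length (the graphs $\widehat Q_{(u,\star)}$). Such a graph re-embeds into any induced subgraph with unbounded induced paths: Theorem \ref{thm:wqo labelled}$(4)$ (via Higman) shows that unbounded paths force unbounded paths \emph{inside components}, and then uniform recurrence together with Lemma \ref{lem:sameage} yields the embedding of the whole sum (Proposition \ref{prop:pathminimal}). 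Your remaining two steps (wqo via Higman, and continuum many incomparable ages via Sturmian words) do run parallel to the paper's, but the ``faithful legibility'' of factors from induced subgraphs, which you defer to an unspecified gadget design, is where the paper's real technical work lies: its Lemma \ref{lem:longenough}, showing that long paths stay in one component, and Proposition \ref{propdistinctages}, which must treat factor sets only up to reversal and complementation (Definition \ref{def:equivalence}) since an undirected path can be read in both directions --- a point your write-up omits. None of this, however, rescues the construction as stated: built on a single copy of $P_\infty$, your $G_s$ is simply not path-minimal.
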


Our construction uses uniformly recurrent sequences, and in fact Sturmian sequences (or billiard sequences) \cite{morse-hedlund}, and lexicographical sums of labelled graphs.
The existence of $2^{\aleph_0}$ wqo ages is a nontrivial fact. It  was obtained for binary relations in \cite{pouzetetat} and for undirected graphs   in \cite{sobranithesis} and in \cite{sobranietat}. The proofs were based  on  uniformly recurrent sequences. We use similar ideas in \cite{laflamme-pouzet-sauer-zaguia} to prove that there are $2^{\aleph_0}$  linear orders $L$ of order type $\omega$ on $\NN$ such that $L$ is orthogonal to the natural order on $\NN$ (see Theorem 2 and Corollary 2 of \cite{laflamme-pouzet-sauer-zaguia}).

We cannot expect to characterize path-minimal up to isomorphy. Indeed,  recall that two graphs are \emph{equimorphic} if they embed in each other. In general, equimorphic graphs are not isomorphic. Now, if a graph $G$ is path-minimal then as it is easy to observe, every  graph equimorphic to $G$ is path-minimal.

We leave open the following:
\begin{problems}
\begin{enumerate}[{(i)}]
  \item If a graph embeds finite induced paths of unbounded  length, does it embed a path-minimal graph ?
  \item If a graph is path-minimal, is its age wqo ?
  \item If a graph $G$ is path-minimal, how many  path-minimal graphs with the same age are they?
  \item If a graph $G$ is path-minimal, is $G$ can be equipped with an equivalence relation $\equiv$ whose blocks are paths in such a way that $(G, \equiv)$ is path-minimal?
\end{enumerate}
\end{problems}

In some situations, $\oplus_n P_n$ and $\sum_nP_n$  are  the only  path-minimal graphs (up to equimorphy).

\begin{theorem}\label{thm:incomparability graph}
If the incomparability graph of a poset embeds finite induced paths of unbounded length, then it embeds the direct sum or the complete sum of finite induced paths of unbounded length.
\end{theorem}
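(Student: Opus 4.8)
The plan is to reduce the statement to an analysis of how induced paths sit inside the poset $P$, and then to extract, by a Ramsey-style argument, an infinite family of paths whose pairwise interaction is completely rigid; the two rigid outcomes are exactly the direct sum and the complete sum.

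\emph{Step 1 (local structure of a path).} First I would show that every induced path $v_1,\dots,v_n$ of $\inc(P)$ is, up to duality of $P$, a \emph{staircase}: $v_i<v_j$ whenever $j-i\ge 2$, while $v_i$ and $v_{i+1}$ are incomparable. Here the consecutive vertices are incomparable and the non-consecutive ones are comparable (these are the defining properties of an induced path in the incomparability graph). The engine is a propagation fact: if $v_1<v_3$ then $v_1<v_j$ for all $j\ge 3$, since $v_j<v_1$ would give $v_j<v_1<v_{j-1}$, contradicting the incomparability of $v_{j-1}$ and $v_j$. Assuming (after dualizing $P$ if needed) that $v_1<v_3$, applying this to the reversed path forces $v_3<v_5$, and if one had $v_2>v_4$ then the dual propagation would give $v_2>v_5$, so $v_1<v_5<v_2$ would contradict the incomparability of $v_1,v_2$; hence $v_2<v_4$, and inductively $v_i<v_{i+2}$ for every $i$. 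Propagating once more from each $v_i$ yields exactly $v_i<v_j\iff j-i\ge 2$. In particular $\inc(P)$ contains an induced $P_n$ if and only if $P$ contains this $2$-shift semiorder $Z_n$, so by hypothesis $P$ contains $Z_n$ for every $n$.

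\emph{Step 2 (disjoint staircases of growing length).} Next I would produce pairwise disjoint staircases $S_1,S_2,\dots$ with $|S_k|\to\infty$. The point is that deleting a finite set $X$ from a staircase of length $m$ leaves a sub-staircase of length at least $(m-|X|)/(|X|+1)$: removing a vertex \emph{disconnects} the path in $\inc(P)$, because its two neighbours at distance $2$ are comparable and hence non-adjacent, so $X$ cuts the path into at most $|X|+1$ index-intervals, each of which is again a staircase. Starting from arbitrarily long staircases and repeatedly deleting the finitely many vertices already used, I obtain the desired disjoint family.

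\emph{Step 3 (rigid interactions and Ramsey).} This is the heart of the argument. For a fixed vertex $b$ and a staircase $(c_1,\dots,c_q)$, the index sets $\{\ell:c_\ell<b\}$ and $\{\ell:c_\ell>b\}$ are down- respectively up-closed for the $2$-shift order, hence each is an interval up to its two extreme indices; so $b$ bears one fixed relation (``above'', ``below'', or ``incomparable'') to a sub-staircase of length at least $q/3-O(1)$. Applying this to every vertex of $S_i$ in turn, and only ever shrinking the \emph{newly added} staircase, I can arrange the family to be \emph{left-rigid}: for $i<j$ each vertex of $S_i$ has a fixed relation to all of $S_j$, encoded by a map $\tau_{ij}\colon S_i\to\{+,-,0\}$. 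Since each $S_i$ is finite, a diagonal thinning makes $\tau_{ij}=\tau_i$ independent of $j$; passing to a sub-staircase $S_i'$ on which $\tau_i$ is constant (the same interval argument) gives a family whose pairwise interaction type $\sigma_i\in\{+,-,0\}$ depends only on $i$, and a final pigeonhole on $i$ yields an infinite subfamily with a single type $\sigma$. If $\sigma=0$ the staircases are pairwise fully incomparable, so the induced subgraph on their union is $\sum_m P_m$; if $\sigma=+$ or $-$ they are pairwise fully comparable, so it is $\oplus_m P_m$. Because $|S_i'|\to\infty$, selecting one subpath of each length $1,2,3,\dots$ completes the embedding.

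\emph{Main obstacle.} I expect Step 3 to be the delicate part: two staircases can interact in a ``mixed'' way that is neither fully comparable nor fully incomparable, so the finitely many colours required for Ramsey only emerge after the interval/monotonicity analysis and the diagonal thinning. Keeping the bookkeeping consistent—shrinking only the staircases added latest while guaranteeing that all lengths still tend to infinity—is precisely where the care is needed.
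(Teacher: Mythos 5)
Your proposal is correct, and it reaches the theorem by a genuinely different route than the paper. The one shared ingredient is your Step 1, which is exactly the paper's Lemma \ref{lem:inducedpath} (up to duality, an induced path of $\inc(P)$ is a staircase: $x_i<x_j$ iff $j\geq i+2$); your derivation of it is stated a bit loosely (the ``reversed path'' remark), but the propagation-plus-transitivity induction does go through, and the paper proves the same statement anyway. From there the two arguments diverge. The paper is structural: for a finite set $F$ it considers the equivalence $\equiv_F$ (same adjacencies to $F$), shows its classes are order convex (Lemma \ref{lem4}), that a convex set containing the endpoints of an induced path of length $n+3$ contains a subpath of length $n$ (Lemma \ref{lem3}), hence that any covering by $k$ convex sets traps a length-$n$ subpath in one piece (Lemma \ref{lem5}), and that each class sees $F$ split into at most two uniform blocks (Lemma \ref{lem1}); these combine into Lemma \ref{lem6}, which for every $n$ produces a path of length $n$ together with a disjoint convex set that carries unboundedly long paths and is uniformly adjacent or uniformly non-adjacent to that path, and the theorem follows by iterating this and pigeonholing. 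You never introduce convexity or $\equiv_F$: your key poset-specific observation is instead that the comparability relation of a single outside vertex to a staircase is monotone along it, so the three sets (below, above, incomparable) are intervals up to two boundary indices, and a fixed relation survives on roughly a third of the staircase; you then buy uniformity vertex-by-vertex over a family of pairwise disjoint staircases, finishing with a diagonal thinning and two pigeonholes. What the paper's machinery buys is economy: $\equiv_F$ uniformizes all vertices of a path at once (at the linear cost $k(n+4)$ of Lemma \ref{lem5}), so its nested construction needs no pre-inflation of path lengths. Your per-vertex approach costs a factor of $3$ per processed vertex, which is why each new staircase must be chosen of length roughly exponential in the number of vertices already fixed; this is precisely the bookkeeping you flag, and it is satisfiable because arbitrarily long staircases survive the deletion of any finite set (your Step 2). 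In exchange, your proof is more elementary and self-contained, and it exhibits explicitly the pairwise-rigid infinite family that the paper's iteration produces only implicitly.
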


If  $x,y$ are two vertices of a graph $G$, we denote by $d_G(x,y)$ the length of the shortest path joining $x$ and $y$ if any, and $d_G(x,y):=+ \infty$ otherwise. This defines a distance on the vertex set $V$ of $G$ whose  values belong to  the completion $\overline \NN^+:= \NN^+\cup \{+\infty\}$ of non-negative  integers. This distance is the \emph{graphic distance}. If $A$ is a subset of $V$, the graph $G'$ induced by $G$ on $A$  is an \emph{isometric subgraph} of $G$  if $d_{G'}(x,y)=d_G(x,y)$ for all $x,y\in A$.

If instead of induced paths we consider isometric paths, then

\begin{theorem}\label{thm:isometric}
If a graph embeds isometric finite paths of unbounded length, then it embeds a direct sum of such paths.
\end{theorem}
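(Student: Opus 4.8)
The plan is to build the direct sum greedily, one summand at a time, exploiting the fact that in an \emph{isometric} path a single outside vertex can be adjacent to only very few path vertices. First I would record the observation that is the crux of the argument and the exact place where isometry (as opposed to mere inducedness) is used: if $P=v_0v_1\cdots v_\ell$ is an isometric path in $G$ and $f$ is any vertex of $G$, then the set $\{i : \{v_i,f\}\in E(G)\}$ has diameter at most $2$ in $\ZZ$, hence at most $3$ elements. Indeed, if $f$ were adjacent to both $v_i$ and $v_j$, then $v_i\,f\,v_j$ would be a walk of length $2$, forcing $d_G(v_i,v_j)\le 2$; since $P$ is isometric, $d_G(v_i,v_j)=|i-j|$, whence $|i-j|\le 2$. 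Thus $f$ is adjacent to at most $3$ vertices of $P$, and therefore for any finite set $F\subseteq V(G)$ at most $4|F|$ vertices of $P$ either lie in $F$ or have a neighbour in $F$.

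With this in hand I would construct isometric paths $Q_1,Q_2,\dots$ by induction so that, for each $m$, $Q_m$ has length at least $m$, is vertex-disjoint from $F_{m-1}:=V(Q_1)\cup\cdots\cup V(Q_{m-1})$, and has no edge joining it to $F_{m-1}$. Assume $Q_1,\dots,Q_{m-1}$ have been found; $F_{m-1}$ is then finite. Since $G$ embeds isometric paths of unbounded length, I pick an isometric path $P$ whose length exceeds $(m+1)(4|F_{m-1}|+1)+4|F_{m-1}|$. By the observation, at most $4|F_{m-1}|$ of its vertices lie in $F_{m-1}$ or are adjacent to it; deleting these vertices cuts $P$ into at most $4|F_{m-1}|+1$ consecutive segments, whose vertex counts sum to at least $|V(P)|-4|F_{m-1}|$, so by the choice of length one segment is a sub-path with at least $m+1$ vertices. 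Let $Q_m$ be that segment. A sub-path of an isometric path is isometric, and by construction $Q_m$ avoids $F_{m-1}$ together with its neighbourhood, so it is vertex-disjoint from and non-adjacent to every previously chosen path.

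Finally, the $Q_m$ are pairwise vertex-disjoint and pairwise non-adjacent, so the subgraph of $G$ induced on $\bigcup_m V(Q_m)$ is exactly the direct sum $\oplus_m Q_m$, a direct sum of isometric finite paths whose lengths tend to infinity, which is the required embedding. The only genuine obstacle is the neighbourhood-control observation, and it is precisely the isometry hypothesis that makes it hold: for a merely induced path an outside vertex may be adjacent to arbitrarily many of its vertices, and this is exactly what forces the appearance of the complete sum in Theorem~\ref{thm:incomparability graph}, so the greedy deletion step would break down. Note that no connectivity reduction and no compactness argument are needed; the induction delivers the summands directly.
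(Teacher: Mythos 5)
Your proof is correct, but it follows a genuinely different route from the paper's. Your key lemma is a local \emph{attachment bound}: since $P$ is isometric, any vertex $f$ outside $P$ has all its neighbours on $P$ within an index-window of diameter $2$, hence at most $3$ of them; you then run a greedy induction in which each new summand is obtained by taking one very long isometric path and trimming away the at most $4|F|$ vertices that meet or touch the previously chosen paths, keeping a long clean segment (which is still isometric, as a subpath of an isometric path). The paper instead proves a slightly stronger statement — the supremum of the diameters of the connected components of $G$ is infinite if and only if $G$ embeds a direct sum of isometric paths of unbounded length — and its greedy step is metric rather than combinatorial: after reducing to a single component of infinite diameter, it observes that the union $X$ of the paths already chosen has finite diameter, hence so does the ball $B_G(X,m)$, so the complement has infinite diameter; a geodesic of length $k<m$ started at a vertex outside $B_G(X,m)$ can never enter $B_G(X,1)$, and geodesics are isometric, so it serves as the next summand. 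What each approach buys: the paper's argument yields the diameter characterization as a by-product and produces the new paths \emph{far away} from the old ones; your argument is more elementary and uniform (no case split on components, no ball/diameter machinery), works with \emph{any} sufficiently long isometric path wherever it sits, and isolates exactly the point where isometry, as opposed to mere inducedness, is used — which is also why, as you note, the method must fail for induced paths and the complete sum appears in Theorem~\ref{thm:incomparability graph}.
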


We examine the primality of the graphs we obtain. Direct and complete sums of finite paths of unbounded length  are not prime  in the sense of   Subsection \ref{subsection:primality}, and not equimorphic to prime graphs.  We construct $2^{\aleph_0}$  examples, none of them being equimorphic to a prime one (Theorem \ref{thm-nonprime-path-minimal}). We construct also $2^{\aleph_0}$ which are prime (Theorem \ref{thm-nonprime-path-minimal}).  These examples are minimal in the sense of \cite{pouzet79}, but not  in the sense of \cite{pouzet-zaguia}.


We conclude this  introduction with:

\subsubsection{An outline of the proof of  Theorem \ref{thm:uncoutable ages}.}

It uses two main  ingredients. One is the so called uniformly recurrent sequences (or words).

Let $L$  be a set, a \emph{uniformly recurrent} word over $L$  with domain  $\NN$ is a  sequence $u:= (u(n))_{n\in \NN}$ of elements of $L$  such that for any given integer $n$ there is some integer $m(u,n)$ such that every factor $v$ of $u$ of length at most $n$  appears as a factor of every factor of $u$ of length at least $m(u,n)$.  \cite{allouche, berthe, lothaire}.   To a uniformly recurrent word $u$ we associate the labelled path $P_{u}$  on $\NN$ where the label of vertex $n$ is the value $u(n)$ of the sequence $u$. If the alphabet $L$ is $\{0,1\}$, we may view $P_{u}$ as a path  with a loop at every vertex $n$ for which $u(n)=1$ and no loop at vertices for which  $u(n)=0$. Next comes the second ingredient.

Fix  a map $\star:  L\times L\rightarrow \{0,1\}$ and denote by  $i\star j$ the image of $(i,j)$. Let  $G_{(u, \star)}$ be the lexicographical sum  over the chain $\omega$ of non-negative integers of copies of the labelled path  $P_{u}$.  This is a labelled graph  whose vertex set is $\NN\times \NN$ with labelling $\ell$. Sets of the form $\{i\}\times \NN$ are the \emph{components} of $G_{(u, \star)}$.  On each component $\{i\}\times \NN$, the graph is a copy of $P_u$, the label $\ell(i,n)$ being $u(n)$,  two vertices  $(i, n)$ and $(j, m)$ of  $G_{(u, \star)}$, such that  $i<j$, are linked by an edge if $\ell(i, n)\star \ell (j, m)=1$.

Since $u$ is uniformly recurrent, the set $\fac(u)$ of finite factors of $u$ is wqo w.r.t the factor ordering hence, by  a theorem of Higman on words \cite {higman},   the ages of $P_{u}$ and  of  $G_{(u, \star)}$ are wqo. Deleting the loops, we get a graph that we denote $\widehat G_{(u, \star)}$ and whose age is also wqo.
Let $A$ be a subset of $V:= V(\widehat G_{(u, \star)})$ such that for each non-negative  integer $i$, the restriction of $\widehat G_{(u, \star)}$ to $A\cap (\{i\}\times \NN)$ is a finite path, and furthermore, the length of these various paths is unbounded.  Then this restriction has the same age as  $\widehat G_{(u, \star)}$ furthermore, two restrictions obtained by this process are equimorphic (i.e. embeddable in each other), thus we do not refer to the family of path and denote by  $\widehat Q_{(u, \star)}$ any of these restrictions. We prove that $\widehat Q_{(u, \star)}$ is path-minimal (Proposition \ref{prop:pathminimal}). If the operation $\star$ is constant and equal to $0$, respectively equal to $1$, $\widehat Q_{(u, \star)}$ is a direct sum, respectively a complete  sum of paths.  To conclude the proof of the theorem,  we need to prove that there is some operation $\star$ and $2^{\aleph_0}$ words  $u$ such that the  ages of $\widehat G_{(u, \star)}$ are incomparable. This is the substantial part of the proof. For that, we prove  in Lemma \ref{lem:longenough} that if $L= \{0,1\}$ and $\star$ is the Boolean sum or a projection  and $u$ is uniformly recurrent then every long enough path in $\widehat G_{(u, \star)}$ is contained in some component. This is a rather technical fact. We think that it holds for  any operation. We deduce that  if $\fac(u)$ and $\fac (u')$ are not equal up to reversal or to addition (mod $2$) of the constant word $1$ the ages of $\widehat G_{(u, \star)}$ and $\widehat G_{(u', \star)}$ are incomparable w.r.t. set inclusion (Proposition \ref{propdistinctages}). To complete the proof of Theorem \ref{thm:uncoutable ages},
we then use the fact that  there are $2^{\aleph_0}$ uniformly recurrent  words $u_\alpha$ on the two letters alphabet $\{0, 1\}$ such that for $\alpha \neq \beta$ the collections $\fac(u_\alpha)$ and $\fac (u_\beta)$ of their finite factors are distinct, and  in fact incomparable with respect to set inclusion (this is a well known fact of symbolic dynamic, e.g. Sturmian words with different slopes will do).

\section{Prerequisites}
We denote by $\omega$ the order type of the chain $(\NN, \leq)$ of non negative integers, by $\omega^d$ the order type of its reverse, by $\zeta$ the order type of $(\ZZ, \leq)$.

The graphs we consider are undirected, simple and have no loops. That is, a {\it graph} is a
pair $G:=(V, E)$, where $E$ is a subset of $[V]^2$, the set of $2$-element subsets of $V$. Elements of $V$ are the {\it vertices} of $G$ and elements of $ E$ its {\it edges}. We denote by $x\sim y$ the fact that the pair $\{x,y\}$ forms an edge. The graph $G$ be given, we denote by $V(G)$ its vertex set and by $E(G)$ its edge set.
If $x,y$ are two vertices of $G$, we denote by $d_G(x,y)$  the length of the shortest path joining $x$ and $y$ if any, and $d_G(x,y):= +\infty$ otherwise.  This defines a distance on $V(G)$ with values in the completion $\overline \NN^+:= \NN^+\cup \{+\infty\}$ of the non-negative integers. This distance is the \emph{graphic distance}. On each connected component, this is an ordinary distance with integer values. If $x$ is any vertex of $G$ and $r\in \NN$, the \emph{ball} of \emph{center} $x$, \emph{radius} $r$,  is the set $B_G(x,r):= \{y\in V(G): d_G(x,y)\leq r\}$. More generally, if $X$ is a subset of $V$, we set $B_G(X, r):= \bigcup_{x\in X} B_G(x,r)$. The \emph{diameter} of $G$, denoted by  $\delta_{G}$, is the supremum of $d_G(x,y)$ for $x,y\in V$. If $A$ is a subset of $V$, the graph $G'$ induced by $G$ on $A$  is an \emph{isometric subgraph} of $G$  if $d_{G'}(x,y)=d_G(x,y)$ for all $x,y\in A$. The supremum of the length of induced finite paths of $G$, denoted by $D_G$,  is sometimes called the \emph{detour} of $G$ \cite{buckley-harary}. We denote by $D_G(x,y)$ the supremum of the lengths of the induced paths joining $x$ to $y$. Evidently, if $x$ and $y$ are connected by a path, $d_G(x,y)\leq D_G(x,y)$. A graph $G$ is \emph{locally finite} if the neighbourhood of every vertex $x$ of $G$ is finite. Equivalently, the balls $B_G(x,r)$ are finite for every vertex $x$ and integer $r \in \NN$. It is easy to see that if a graph $G$ is finite, the  detour of $G$ is finite; if the detour is finite and the graph  is connected then the diameter is bounded; and if the diameter is bounded and $G$ is locally finite then $G$ is finite.
We formally state below the  well-known result of K\"onig \cite{konig} and refer to  \cite{polat} and \cite{watkins} for significant results on infinite paths.

\begin{proposition}\label{konig}If a connected graph is infinite and locally finite then it contains an infinite isometric path
\end{proposition}

We apply to graphs concepts of the theory of relations as developed in \cite{fraissetr}. We feel free to use these concepts for labelled graphs as well without giving more details.   A graph $G$ \emph{embeds} in a graph $G'$ and we set $G\leq G'$  if $G$ is isomorphic to an induced subgraph of $G$.  Two graphs $G$, $G'$ which embed in each other are said to be \emph{equimorphic}. A class $\mathcal C$ of graphs is \emph{hereditary} if $G'\in \mathcal C$ and $G\leq G'$ imply $G\in \mathcal C$. We recall that the \emph{age} of a graph $G$ is the collection $\age(G)$ of finite graphs $H$ which embed in $G$. A collection $\mathcal A$ of finite graphs is the age of a graph if and only if it is hereditary and up-directed (that is any pair $H,H'$ of members of $\mathcal A$ embeds in some member of $\mathcal A$) (see Fra\"{\i}ss\'e 1945 \cite{fraissetr}). An age $\mathcal A$ is \emph{inexhaustible}  if any two of its members embed disjointedly into a third. Equivalently, if   $G$ is a graph with age $\mathcal A$ then for every finite subset $F$ of $V(G)$ the graph induced on $V(G)\setminus F$ has the same age as $G$. Most of the time we consider the members of an age up to isomorphy.

\subsection{Posets, Comparability and incomparability graphs}
Throughout, $P :=(V, \leq)$ denotes an ordered set (poset), that is
a set $V$ equipped with a binary relation $\leq$ on $V$ which is
reflexive, antisymmetric and transitive. We say that two elements $x,y\in V$ are \emph{comparable} if $x\leq y$ or $y\leq x$, otherwise,  we say they are \emph{incomparable}. The \emph{dual} of $P$ denoted $P^{d}$ is the order defined on $V$ as follows: if $x,y\in V$, then $x\leq y$ in $P^{d}$ if and only if $y\leq x$ in $P$. A set of pairwise comparable elements is called a \emph{chain}. On the other hand, a set of pairwise
incomparable elements is called an \emph{antichain}.

The \emph{comparability graph}, respectively the \emph{incomparability graph}, of a poset $P:=(V,\leq)$ is the undirected graph, denoted by $Comp(P)$, respectively $Inc(P)$, with vertex set $V$ and edges the pairs $\{u,v\}$ of comparable distinct vertices (that is, either $u< v$ or $v<u$) respectively incomparable vertices. We set $x\sim y$ if $x$ and $y$ are comparable, and $x\parallel y$ otherwise. A graph $G:= (V, E)$ is a \emph{comparability graph} if the edge set is the set of comparabilities of some order on $V$. From the Compactness Theorem of First Order Logic, it follows that a graph is a comparability graph if and only if every finite induced subgraph is a comparability graph. Hence, the class of comparability graphs is determined by a set of finite obstructions. The complete list of minimal obstructions was determined by Gallai \cite{gallai}.

\subsection{Words, age of words, uniformly recurrent words}

We recall basic facts on words  \cite{allouche,lothaire,pytheas}.

Let $A$ be a finite set, $k$ be  the number of its elements. A \emph{word} $u$ is a sequence of elements of $A$, called \emph{letters},   whose domain is an interval $I$  of $\ZZ$. If  this sequence is finite, the \emph{length} of $u$ is the length $\vert u \vert$ of the sequence. In this  case,  we may  suppose that the domain is $[0, \dots n[$. If $u$ is a word, and $I$ is an interval of its domain, the \emph{restriction} of $u$ to $I$ is denoted by   $u_{\restriction I}$. We denote by $\Box$ the empty word. If  $I$ is infinite, we may suppose that the domain is $\NN$, $\NN^*:= \{0, -1, \dots, -n \dots \}$ or $\ZZ$. If $u$ is  a finite word and $v$ is a word, finite or infinite with domain $\NN$, the \emph{concatenation} of $u$ and $v$ is the word $uv$ obtained by writing $v$ after $u$. If $v$ has domain $\NN^*$, the word $vu$ is similarly defined. A word $v$ is a \emph{factor} of $u$ if $u= u_1vu_2$. This defines an order on the collection of finite words, the \emph{factor ordering}.  The \emph{age} of a word $u$ is the set $\fac (u)$ of all its finite factors endowed with the factor order. We note that a set $\mathcal A$ of finite words is the age of a word $u$ if $\mathcal A$ is an \emph{ideal}  for the factor ordering, that is a non-empty set of finite words which is an \emph{initial segment} (that is if $v\in \mathcal A$, then every factor of $v$ is also in $\mathcal A$) and  is up-directed (that is if $v,v'\in \mathcal A$, then there exists $v''\in \mathcal A$ such that $v$ and $v'$ are factors of $v''$). Note that the domain of $u$ is not necessarily $\NN$, it can be $\NN^*$ or $\ZZ$.
The age of a word $u$ is \emph{inexhaustible} if for every $v, v'\in \fac(u)$ there is some $w$ such that $v'wv'\in \fac (u)$. If an age is inexhaustible this is the age of a word on $\NN$.  A  word $u$  is \emph{recurrent} if every finite factor occurs infinitely often. This amounts to the fact that $\fac(u)$ is inexhaustible.

A word $u$ on $\NN$  is \emph{uniformly recurrent} if for every $n\in \NN$ there exists $m\in \NN$ such that each factor $u(p),...,u(p+n)$ of length $n$ occurs as a factor of every factor of length $m$.

There is a relation between uniformly recurrent words and well-quasi-ordering via the notion of J\'onsson poset.
An ordered set (poset) $P$ is a \emph{J\'onsson} poset if it is infinite and every proper initial segment has a strictly smaller cardinality than $P$. J\'onsson posets were introduced by Oman and Kearnes \cite{kearnes}. Countable J\'onsson posets were studied and described in \cite{pouzetetat, pouzet-sauer, assous-pouzet}. In particular, a countable poset $P$ is J\'onsson if and only if it is well-quasi-ordered, has height  $\omega$, and  for each $n<\omega$, there is  $m<\omega$ such that each element of
height at most $n$ is below every element of height at least $m$.

Their appearance in the domain of symbolic dynamics is due to the following result.
\begin{theorem}\label{unif-recurrent} Let $u$ be a word with domain $\NN$ over a finite alphabet. The following properties are equivalent:
\begin{enumerate}[{(i)}]
\item $u$ is uniformly recurrent;
\item $\fac (u)$ is inexhaustible and wqo;
\item $\fac(u)$ equipped with the factor ordering is a countable J\'onsson poset.
\end{enumerate}
\end{theorem}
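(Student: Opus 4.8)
The plan is to exploit that $\fac(u)$ is always countable, and infinite because $u$ is, so that $\lvert\fac(u)\rvert=\aleph_0$; consequently the defining J\'onsson property ``every proper initial segment has cardinality $<\lvert\fac(u)\rvert$'' means simply that \emph{every proper initial segment of $\fac(u)$ is finite}. I would make this the hub and first establish
\[
u \text{ is uniformly recurrent} \iff \text{every proper initial segment of } \fac(u) \text{ is finite,}
\]
which gives (i)$\Leftrightarrow$(iii) at once. For ``$\Rightarrow$'', fix a factor $w$; uniform recurrence yields an $m$ with every factor of length $\geq m$ containing $w$, so the largest initial segment omitting $w$, namely $\{x\in\fac(u):w\not\leq x\}$, contains only factors of length $<m$ and is finite; since every proper initial segment is contained in one of this shape, all of them are finite. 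The direction ``$\Leftarrow$'' is the same computation reversed: if each $\{x:v\not\leq x\}$ is finite then its elements have bounded length, so every long enough factor contains $v$, and taking the maximum bound over the finitely many factors $v$ of a given length produces the witness $m$ required by uniform recurrence.

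Granting (i), I would then obtain (ii). Inexhaustibility, i.e.\ recurrence, is immediate: each factor $v$ occurs in every window of length $\geq m(u,\lvert v\rvert)$, and the pairwise disjoint windows $[0,m),[m,2m),\dots$ of the domain $\NN$ then force infinitely many occurrences of $v$. For the wqo part the factor order is well founded (length strictly drops along $<$), so it suffices to rule out an infinite antichain $A$; its down-closure $\downarrow A$ is an initial segment containing the infinite set $A$, hence is not a proper (finite) initial segment, so $\downarrow A=\fac(u)$. But then, extending some $a\in A$ to a strictly longer factor $w$ of $u$, we get $w\leq a'$ for some $a'\in A$ and hence $a<w\leq a'$, contradicting that $A$ is an antichain. (Once (iii) is in hand one may instead quote the cited characterisation, by which a countable J\'onsson poset is automatically wqo.)

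The substantial step, and the one I expect to be the main obstacle, is (ii)$\Rightarrow$(i). Assuming $\fac(u)$ inexhaustible (so $u$ is recurrent) and wqo, I would argue by contraposition. If $u$ is not uniformly recurrent, then for some length $n$ arbitrarily long factors omit some length-$n$ factor; by finiteness of the alphabet a single factor $v$ is omitted from arbitrarily long windows, so, listing its occurrence positions $p_0<p_1<\cdots$, the gaps satisfy $\sup_i(p_{i+1}-p_i)=\infty$. I would then form the bridging factors $z_i:=u_{\restriction[p_i,\,p_{i+1}+\lvert v\rvert)}$, running from one occurrence of $v$ to the end of the next, whose lengths are unbounded. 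The crux is the bookkeeping observation that each $z_i$ contains \emph{exactly two} full occurrences of $v$, one at each end: an occurrence lying wholly inside $z_i$ must begin in $[p_i,p_{i+1}]$, and since $p_i,p_{i+1}$ are consecutive occurrences there are no others, \emph{whatever the self-overlaps of $v$}. Hence if $z_i$ embeds in $z_j$ at offset $d$, the two occurrences of $v$ in $z_i$ must land on the only two occurrences in $z_j$, forcing $d=0$ and equal gaps, so $z_i=z_j$. Thus distinct $z_i$ are pairwise incomparable, and as their lengths are unbounded they give infinitely many distinct words, an infinite antichain contradicting wqo. It is exactly this ``two occurrences'' control that defuses the overlap subtleties of $v$ and turns the $z_i$ into a genuine antichain, which is why I regard it as the heart of the argument.
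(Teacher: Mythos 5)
Your proof is correct, but it is organized differently from the paper's and the hard direction rests on a different mechanism. The paper treats (i)$\Leftrightarrow$(iii) and (i)$\Rightarrow$(ii) as immediate and proves only (ii)$\Rightarrow$(iii), in the contrapositive form of a lemma valid for an \emph{arbitrary} inexhaustible age of words: given an infinite proper initial segment $\mathcal B$, it takes occurrences $I_n$ of longer and longer members of $\mathcal B$, chooses maximal intervals $J_n\supseteq I_n$ with $u_{\restriction J_n}\in\mathcal B$ (finiteness of the candidate intervals is where inexhaustibility enters), and shows that the words $\gamma_n$ obtained by adjoining one letter on each side of $J_n$ form an infinite antichain, incomparability coming from the maximality of $J_n$. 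You instead make the reformulation ``uniformly recurrent $\iff$ every proper initial segment of $\fac(u)$ is finite'' the hub, deduce (i)$\Rightarrow$(ii) from it, and prove (ii)$\Rightarrow$(i) by contraposition using the concrete initial segment $\{x\in\fac(u): v\not\leq x\}$ furnished by the failure of uniform recurrence; your antichain consists of the bridging words $z_i$ between consecutive occurrences of $v$, with incomparability justified by the ``exactly two occurrences'' count rather than by maximality. It is worth noting that your $z_i$ are exactly what the paper's construction produces when specialized to $\mathcal B=\{x: v\not\leq x\}$: a maximal $v$-free window extended by one letter on each side runs from the start of one occurrence of $v$ to the end of the next, so the two antichains coincide. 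What differs is the trade-off: the paper's lemma applies to any infinite proper initial segment of any inexhaustible age, hence gives not-J\'onsson $\Rightarrow$ not-wqo directly without ever mentioning uniform recurrence, whereas your argument is more elementary and self-contained, replacing the abstract maximality bookkeeping by a transparent occurrence-counting step.
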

The only nontrivial implication is $(ii)\Rightarrow (iii)$ (Lemma II-2.5 of Ages belordonn\'es in \cite {pouzetetat} p. 47). For reader's convenience, we prove it below (the proof   was never published).

\begin{lemma}Let $\mathcal A$ be an inexhaustible age of words. If $\mathcal A$ is not J\'onsson  then it is not wqo.
\end{lemma}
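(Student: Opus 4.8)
The plan is to establish the contrapositive in the form already packaged by Theorem~\ref{unif-recurrent}: from an inexhaustible age $\mathcal A=\fac(u)$ that is not J\'onsson, I will exhibit an infinite antichain for the factor ordering. The first step is to turn ``not J\'onsson'' into a single forbidden factor. Since the alphabet is finite and $\mathcal A$ is inexhaustible, $\mathcal A$ is countable and infinite; hence being J\'onsson is equivalent to every proper initial segment being finite, and its negation supplies an \emph{infinite} proper initial segment $I\subsetneq\mathcal A$. Choosing any $f\in\mathcal A\setminus I$, the fact that $I$ is an initial segment for the factor order forces every word of $I$ to avoid $f$ as a factor (otherwise $f$ itself would lie in $I$); and since $I$ is infinite over a finite alphabet, it contains words of unbounded length. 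Thus I obtain a fixed $f\in\fac(u)$ together with $f$-avoiding factors of $u$ of unbounded length, which is exactly the quantitative failure of uniform recurrence.

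Next I would use that an inexhaustible age of words is recurrent, so $f$ occurs infinitely often in $u$; let $p_0<p_1<\cdots$ enumerate the starting positions of its occurrences. I claim the gaps $p_{i+1}-p_i$ are unbounded: if they were bounded by some $C$, then every window of length $C+|f|$ would contain an occurrence of $f$, so no $f$-avoiding factor could have length $\ge C+|f|$, contradicting the previous paragraph. For each $i$ I set $F_i:=u_{[\,p_i,\;p_{i+1}+|f|\,)}$, the factor delimited by two \emph{consecutive} occurrences of $f$. By consecutiveness, no occurrence of $f$ starts strictly between $p_i$ and $p_{i+1}$, so inside $F_i$ the only starting positions of $f$ are its two ends; that is, $F_i$ contains exactly two occurrences of $f$, at mutual distance $g_i:=p_{i+1}-p_i$.

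The antichain will be $\{F_i:i\in S\}$ for an index set $S$ chosen so that the gaps $g_i$ ($i\in S$) are pairwise distinct, which is possible since the gaps are unbounded. The key invariant is that ``the number of occurrences of $f$, together with the distance between the two outermost ones'' is read off a word and is preserved by factor-embeddings. Indeed, if $F_i$ were a factor of $F_{i'}$, the two occurrences of $f$ in $F_i$ (at distance $g_i$) would map to two occurrences of $f$ in $F_{i'}$; but $F_{i'}$ has only its two terminal occurrences, at distance $g_{i'}\neq g_i$, a contradiction. Hence the $F_i$ are pairwise non-embeddable, $\mathcal A$ contains an infinite antichain, and $\mathcal A$ is not wqo.

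The hard part is the synthesis in the middle: I must pass from the mere \emph{existence} of arbitrarily long $f$-avoiding factors to their being realized \emph{inside the single word} $u$ as the interiors between two consecutive occurrences of $f$ at arbitrarily large distance. This is precisely where recurrence (rather than just the existence of avoiders) is essential, and it is handled by the window/pigeonhole argument turning bounded gaps into bounded avoiders. Once that is in place, the delicate bookkeeping of occurrences of $f$ collapses cleanly: consecutiveness of $p_i,p_{i+1}$ makes the count of occurrences and their distance an honest embedding invariant, with no interference from self-overlaps or borders of $f$, and the antichain follows immediately.
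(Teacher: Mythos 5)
Your proof is correct, and it reaches the infinite antichain by a genuinely different route than the paper's. The paper works directly with the given infinite proper initial segment $\mathcal B$: it realizes members of $\mathcal B$ of unbounded length as restrictions $u_{\restriction I_n}$, shows via inexhaustibility that only finitely many intervals $J\supseteq I_n$ satisfy $u_{\restriction J}\in \mathcal B$, picks a \emph{maximal} such $J_n$, and extends it by one letter on each side; incomparability of the resulting words $\gamma_n$ is then forced by the maximality of $J_n$ combined with the initial-segment property of $\mathcal B$. You instead collapse ``not J\'onsson'' into a single forbidden word: choosing $f\in\mathcal A\setminus I$, every member of $I$ avoids $f$, so $u$ has $f$-avoiding factors of unbounded length; recurrence of $u$ (your use of inexhaustibility, playing the role it plays in the paper's Claim) gives infinitely many occurrences of $f$, and the avoiders force the gaps between consecutive occurrences to be unbounded. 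Your antichain consists of complete return words $F_i$ to $f$ with pairwise distinct gaps, and incomparability is certified by counting occurrences of $f$ and their mutual distance -- a verification that is correct as you state it, since a factor embedding of $F_i$ into $F_{i'}$ would place two occurrences of $f$ at distance $g_i$ inside a word whose only occurrences sit at distance $g_{i'}\neq g_i$. The two constructions are in fact cousins: your $F_i$ is precisely the paper's ``maximal interval extended by one letter on each side'' applied to the initial segment of $f$-avoiders rather than to $\mathcal B$ itself. What the paper's version buys is that it never needs to isolate a forbidden word or invoke recurrence as a separate fact; what yours buys is a more concrete incomparability certificate and an explicit link to the failure of uniform recurrence (unbounded return time to some factor), which is the standard symbolic-dynamics picture behind Theorem \ref{unif-recurrent}. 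One small imprecision: ``every window of length $C+|f|$ contains an occurrence of $f$'' holds only for windows starting at or after $p_0$; this is harmless because, by the recurrence you already invoke, every $f$-avoiding factor reoccurs beyond $p_0$ (or one simply enlarges the bound to $p_0+C+|f|$).
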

\begin{proof}
Suppose that $\mathcal A$ contains a proper initial segment $\mathcal B$  which is  infinite. Let $\beta_0, \dots \beta_n\dots $ be  a sequence of members of $\mathcal B$ with $\vert \beta_n\vert <\vert \beta_{n+1} \vert$ for every $n\in \NN$. Let $u$ be a word on $\NN$ with $\fac (u)= \mathcal A$. Let $I_n$ be an interval of $\NN$ such that $u_{\restriction I_n}= \beta_n$.

\noindent \textbf{Claim:} For each $n\in \NN$, the set $\mathcal J_n$  of intervals $J $ of $\NN$ such that $I_n\subseteq J$ and $u_{\restriction J}\in \mathcal B$ is finite.\\
\noindent{\bf Proof of Claim.}
If this is not the case, let $[I_n \rightarrow):= \{m\in \NN: \min (I_n)\leq m\}$ and $(\leftarrow I_n[:= \{m\in \NN: m<\min I_n\}$. Then $\fac (u_{\restriction [I_n \rightarrow)}) \subseteq \mathcal B$. Since $\mathcal A$ is inexhaustible $\fac (u_{\restriction (\leftarrow I_n[})= \fac (u)= \mathcal A$. This contradicts $\mathcal B\not = \mathcal A$. This completes the proof of the claim. \hfill $\blacksquare$

For each $n\in \NN$ let  $J_n$ be a maximal element of $\bigcup \mathcal J_n$. The set $I$ of $n\in \NN$ such that $0\in J_n$ is finite, otherwise the union of the $J_n$'s would be $\NN$ and $\mathcal B$ would be equal to $\mathcal A$. For $n \in \NN\setminus I$ set
$x_n := \min J_n -1$, $y_n:= \max J_n+1$, $\overline J_n:= J_n \cup \{x_n, y_n\}$ and $\gamma_n:= u_\restriction {\overline J_n}$.

Since the length of the $\beta_n's$ is increasing, there is an infinite subset $K$ of $\NN \setminus I$ such that $\vert \gamma_n\vert <\vert \gamma_m\vert$ for $n<m$ in $K$. We claim that the set of $\gamma _n$ for $n\in K$ is an infinite antichain.

Indeed, if $\gamma_n\leq \gamma_m$ then either $\vert \gamma_n\vert=\vert  \gamma_m\vert $ in which case  $\gamma_n=\gamma_m$, or $\vert \gamma_n\vert<\vert  \gamma_m\vert $. In this latter case, either $\gamma_{n \restriction J_n \cup \{x_n\}}\leq \gamma_{m \restriction J_m}$ or $\gamma_{n \restriction J_n \cup \{y_n\}}\leq \gamma_{m \restriction J_m}$. Since $\gamma_{m \restriction J_m}\in \mathcal B$ and $\mathcal B$ is an initial segment of $\mathcal A$, either  $\gamma_{n \restriction J_n \cup \{x_n\}}$ or $\gamma_{n \restriction J_n \cup \{y_n\}}$ belongs to $\mathcal B$, contradicting the maximality of $J_n$.
\end{proof}

In the sequel, we suppose that $A:= \{0, \dots k-1\}$. To the word $u$ with domain $\NN$ we may associate the labelled path $P_u$ on $\NN$ where vertex $n$ is labelled by $u(n)$. We may consider $P_u$ as a binary structure made of the path and $k$ unary relations $U_0,...,U_{k-1}$ where $U_i(n)=1$ if $u(n)=i$.  If the alphabet is $\{0,1\}$, this can be expressed in a much simpler form:  $P_u$ is the path where vertex $n$ has a loop if and only if $u(n)=1$.

Due to Theorem \ref{unif-recurrent} and a theorem of Higman, there is a simple correspondence between properties of words and properties of labelled paths.
First we recall Higman's theorem on words.

Let $P:= (V,\leq)$ be an ordered set and let $P^*$ be the set of finite sequences of elements of $V$ that we see as words on the alphabet $V$. The \emph{subword ordering} on $P^{*}$ is defined as follows.  If $u:=u_0,...,u_{n-1}$ and $v:=v_0,...,v_{m-1}$ are two words then we let $u\preceq v$ if there is a 1-1 order preserving map $\varphi$ from the interval $[0,n[$ into the interval $[0,m[$ such that $u_i\leq v_{\varphi(i)}$ for each $i<n$.

\begin{theorem}\label{thm:.higman}\cite{higman}If $P$ is wqo, then $P^{*}$ is wqo.
\end{theorem}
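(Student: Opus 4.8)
The plan is to use Nash-Williams' \emph{minimal bad sequence} argument. Recall that a quasi-order is wqo if and only if it admits no \emph{bad sequence}, i.e.\ no infinite sequence $(x_n)_{n\in\NN}$ with $x_i\not\leq x_j$ for all $i<j$; so it suffices to show $P^*$ has no bad sequence. Suppose for contradiction that it does. I would then build a bad sequence minimal with respect to length: choose $w_0\in P^*$ of least length among all words beginning some bad sequence, and, having chosen $w_0,\dots,w_{n-1}$, choose $w_n$ of least length among all $w$ for which $w_0,\dots,w_{n-1},w$ begins a bad sequence. The resulting $(w_n)_{n\in\NN}$ is bad and minimal in this sense. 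No $w_n$ equals the empty word $\Box$, since $\Box\preceq v$ for every $v$ would create a good pair; so I may write $w_n=a_n w_n'$ with first letter $a_n\in P$ and strictly shorter tail $w_n'\in P^*$.

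Next I would apply the hypothesis to the sequence of first letters. Since $P$ is wqo, $(a_n)_{n\in\NN}$ has an infinite increasing subsequence: there is an infinite set $I=\{n_0<n_1<\cdots\}\subseteq\NN$ with $a_{n_0}\leq a_{n_1}\leq\cdots$ (via Ramsey's theorem). I then form the mixed sequence
$$w_0,\ w_1,\ \dots,\ w_{n_0-1},\ w_{n_0}',\ w_{n_1}',\ w_{n_2}',\ \dots$$
keeping the original words before index $n_0$ and then listing the tails in order. Since $|w_{n_0}'|<|w_{n_0}|$, its initial segment of length $n_0+1$ ends in a word strictly shorter than the minimal choice $w_{n_0}$; by minimality this mixed sequence cannot be bad, so it has a good pair.

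Finally I would locate that good pair and reach a contradiction. It cannot lie inside $w_0,\dots,w_{n_0-1}$, which inherit badness from $(w_n)$. It cannot be $w_i\preceq w_{n_k}'$ with $i<n_0$, for then $w_i\preceq w_{n_k}'\preceq w_{n_k}$ would contradict badness of $(w_n)$. Hence it is $w_{n_j}'\preceq w_{n_k}'$ for some $j<k$. But $a_{n_j}\leq a_{n_k}$, and prepending comparable letters preserves the subword order, so $w_{n_j}=a_{n_j}w_{n_j}'\preceq a_{n_k}w_{n_k}'=w_{n_k}$ with $n_j<n_k$, contradicting badness of $(w_n)$. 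This contradiction shows $P^*$ is wqo.

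The heart of the method---and the step demanding the most care---is the minimal bad sequence construction together with the case analysis in the last paragraph: one must verify that every possible position of the good pair in the mixed sequence is excluded, which is precisely what forces a comparison $w_{n_j}'\preceq w_{n_k}'$ between two tails and lets the increasing choice of first letters complete the argument. The extraction of an increasing subsequence of first letters via Ramsey's theorem is routine by comparison.
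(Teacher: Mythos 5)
Your proof is correct, but there is nothing in the paper to compare it against: the paper states this theorem as a citation of Higman's 1952 article and gives no proof at all. What you have written is the classical minimal-bad-sequence argument of Nash-Williams, and all the essential steps are in place: the minimal bad sequence exists and is itself bad; no term can be the empty word; wqo-ness of $P$ yields, via Ramsey's theorem, an infinite set of indices along which the first letters form a $\leq$-increasing chain; minimality of the choice of $w_{n_0}$ forces a good pair in the mixed sequence; and your three-way case analysis correctly rules out every possible location of that pair except $w_{n_j}'\preceq w_{n_k}'$, which together with $a_{n_j}\leq a_{n_k}$ gives $w_{n_j}\preceq w_{n_k}$ and the contradiction. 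This route is genuinely different from Higman's original proof, which proceeded through a finite-basis/closure argument rather than minimal bad sequences; the Nash-Williams method buys brevity and is the one that generalizes to better-quasi-ordering theory. One point to align with the paper's conventions: the paper defines wqo as ``no infinite antichain,'' while your argument uses the characterization ``no bad sequence.'' The two notions coincide exactly for well-founded quasi-orders, which covers every use the paper makes of the theorem (finite words under the subword or factor ordering, finite labelled graphs under embeddability, all of which are well-founded), but you should either say you are working with the standard definition of wqo (every infinite sequence contains a pair $x_i\leq x_j$ with $i<j$) or note this well-foundedness explicitly.
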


Next, we state the correspondence.

\begin{theorem}\label{unif-recurrent-charact} Let $u$ be a word with domain $\NN$ on a finite alphabet. The following properties are equivalent:

\begin{enumerate}[{(i)}]
\item $u$ is uniformly recurrent;
\item $\age (P_u)$ is inexhaustible and wqo.
\end{enumerate}
\end{theorem}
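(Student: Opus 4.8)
The plan is to establish the equivalence of Theorem~\ref{unif-recurrent-charact} by transporting the equivalence already proved in Theorem~\ref{unif-recurrent} across the correspondence between a word $u$ and its labelled path $P_u$. The key observation is that the finite induced subgraphs of $P_u$ that are \emph{connected} are precisely the labelled subpaths, and each such subpath corresponds bijectively (up to the labelling data $U_0,\dots,U_{k-1}$) to a factor of $u$. Since a labelled path $P_u$ on $\NN$ is connected and locally finite, and since induced paths joining two vertices $n<m$ must use every vertex between them, the connected finite induced subgraphs of $P_u$ are exactly the restrictions $P_{u_{\restriction I}}$ for finite intervals $I$. Thus, working up to isomorphy, the connected members of $\age(P_u)$ are in order-isomorphic correspondence with $\fac(u)$ under the factor ordering, where the factor order on words translates into embeddability of labelled subpaths.

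First I would make precise this correspondence as a poset isomorphism between $\fac(u)$ and the subposet of $\age(P_u)$ consisting of (isomorphism types of) connected induced subgraphs. A general finite induced subgraph of $P_u$ is a finite disjoint union of such connected labelled subpaths, so $\age(P_u)$ is, as a poset, isomorphic to the poset of finite multisets (or equivalently, of finite $\preceq$-words) over $\fac(u)$. This is where Higman's theorem (Theorem~\ref{thm:.higman}) enters: if $\fac(u)$ is wqo, then the set of finite sequences over it, ordered by the subword ordering, is wqo, and since a finite disjoint union of paths embeds in another exactly when there is an injective order-preserving matching of components respecting embeddability, the wqo property of $\fac(u)$ transfers to $\age(P_u)$. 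Conversely, an infinite antichain in $\fac(u)$ yields an infinite antichain of connected graphs in $\age(P_u)$, so $\age(P_u)$ wqo forces $\fac(u)$ wqo.

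Next I would handle the inexhaustibility side. Inexhaustibility of the age of the word $u$ (every $v,v'\in\fac(u)$ admit $w$ with $v'wv'\in\fac(u)$) and inexhaustibility of $\age(P_u)$ (any two finite induced subgraphs embed disjointly into a third) must be matched. The forward direction uses recurrence of $u$: given two finite induced subgraphs of $P_u$, each is covered by a finite interval of $\NN$, and recurrence lets us find arbitrarily far-apart occurrences of these intervals as factors, producing a third induced subgraph containing two disjoint (hence non-adjacent, since the intervals are separated by at least one vertex) copies. The converse is similar, reading off factor repetitions from disjoint embeddings. With both wqo and inexhaustibility translated in both directions, the chain of equivalences $(i)\Leftrightarrow\textrm{(}\fac(u)\textrm{ inexhaustible and wqo)}\Leftrightarrow(ii)$ closes, the first equivalence being exactly Theorem~\ref{unif-recurrent}.

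I expect the main obstacle to be the bookkeeping in the wqo transfer, specifically verifying that embeddability of one finite labelled graph (a disjoint union of labelled paths) into another is governed precisely by the subword ordering on the sequence of component-factors, so that Higman's theorem applies cleanly. One must be careful that an induced embedding of a union of paths cannot merge or split components and must respect both the path structure and the unary labels; once this is confirmed, the labelled paths form a wqo alphabet (namely $\fac(u)$ itself) and Higman gives the result. A secondary subtlety is ensuring that the graphic distance and induced-subgraph structure genuinely force connected induced subgraphs of $P_u$ to be intervals, which follows from local finiteness and the fact that in a path every induced connected subgraph is again a path on a set of consecutive integers.
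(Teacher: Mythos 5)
Your proposal is correct and takes essentially the same route as the paper: it reduces to Theorem \ref{unif-recurrent} via $\fac(u)$, views members of $\age(P_u)$ as finite direct sums of labelled paths, i.e., as words over the wqo alphabet $\fac(u)$, applies Higman's theorem for the wqo transfer, and checks inexhaustibility directly. One gloss you share with the paper (which calls the converse direction ``trivial''): the factor-to-subpath correspondence identifies a word with its reversal, so an antichain in $\fac(u)$ only yields an antichain in $\age(P_u)$ after a small extraction argument ruling out comparabilities of the form $v^d\leq w$; this affects the substance of neither proof.
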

\begin{proof}
Suppose that $u$ is uniformly recurrent. According to $(i)\Rightarrow (ii)$ of Theorem \ref{unif-recurrent}, $\fac (u)$ is inexhaustible and wqo. Members of $\age (P_u)$ are finite direct sums of labelled paths. They can be viewed as words on the alphabet $\fac (u)$. From Higman's theorem this set of words is wqo. Hence  $\age (P_u)$ is wqo. The fact that $\age (P_u)$ is inexhaustible is obvious.
Conversely, suppose that $\age (P_u)$ is inexhaustible and wqo. Then trivially, $\fac (u)$ is inexhaustible and wqo. Hence, from $(ii)\Rightarrow (i)$ of  Theorem \ref{unif-recurrent}, $u$ is uniformly recurrent.
\end{proof}

As it is well known there are $2^{\aleph_0}$ uniformly recurrent words with distinct ages (e.g. Sturmian words with different slopes, e.g., see Chapter 6 of  \cite{pytheas}).  Despite the fact that  the age of a labelled path $P_u$  does not determine the age of $u$ (it does it  up to reversal), this suffices to prove that:

\begin{lemma}\label{fact1} There are $2^{\aleph_0}$ labelled paths with distinct ages.
\end{lemma}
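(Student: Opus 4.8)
The plan is to transfer the known abundance of uniformly recurrent words to labelled paths, the only subtlety being that the passage $u\mapsto P_u$ forgets the orientation of $\NN$. First I would record the basic structural fact about $P_u$. An induced subgraph of $P_u$ on a finite set $F\subseteq \NN$ splits into its maximal runs of consecutive integers, so the connected finite induced subgraphs of $P_u$, viewed as labelled graphs, are exactly the labelled paths carried by the factors of $u$, and a factor $w$ and its reverse $\overline w$ yield \emph{isomorphic} labelled paths. Since $u$ is recurrent, any two factors occur with arbitrarily large gaps, so $\age(P_u)$ is closed under finite direct sums and is therefore determined by its connected members. Consequently $\age(P_u)$ determines, and is determined by, the reversal-closed language $\fac(u)\cup\overline{\fac(u)}$; equivalently,
\[
\age(P_u)=\age(P_{u'}) \iff \fac(u')=\fac(u)\ \text{or}\ \fac(u')=\overline{\fac(u)}.
\]
This is the precise content of the clause ``it does it up to reversal''.

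With this in hand the argument is a cardinality count. Invoke the $2^{\aleph_0}$ uniformly recurrent words $u_\alpha$ with pairwise distinct ages $\fac(u_\alpha)$ (Sturmian words of distinct slopes), and define $\alpha\equiv\beta$ iff $\age(P_{u_\alpha})=\age(P_{u_\beta})$. By the displayed equivalence, $\alpha\equiv\beta$ with $\alpha\neq\beta$ forces $\fac(u_\beta)=\overline{\fac(u_\alpha)}$, so every $\equiv$-class has at most two elements. A partition of a set of cardinality $2^{\aleph_0}$ into blocks of size at most $2$ still has $2^{\aleph_0}$ blocks, and distinct blocks give distinct labelled-path ages; hence there are at least $2^{\aleph_0}$ distinct ages $\age(P_{u_\alpha})$. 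The reverse inequality is immediate, since there are only countably many finite labelled graphs, so at most $2^{\aleph_0}$ collections of them can occur as ages. Therefore there are exactly $2^{\aleph_0}$ labelled paths with pairwise distinct ages.

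The only place requiring care is the structural reversal fact of the first paragraph, namely that $\age(P_u)$ recovers $\fac(u)$ up to reversal and nothing coarser; this rests on the identification of connected members with factors modulo reversal together with closure under direct sums, both consequences of recurrence. For Sturmian words there is in fact no obstacle at all: the factor language of a Sturmian word is closed under reversal, so $\overline{\fac(u_\alpha)}=\fac(u_\alpha)$, the map $\alpha\mapsto\age(P_{u_\alpha})$ is outright injective on a family with distinct factor sets, and one obtains $2^{\aleph_0}$ distinct ages directly, bypassing the two-to-one bookkeeping entirely. I would present the general counting argument as the main line, since it matches the remark that distinctness of the word ages ``suffices'', and note the reversal-closure of Sturmian languages as the clean special case.
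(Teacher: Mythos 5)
Your proof is correct and takes essentially the same route as the paper: both arguments start from $2^{\aleph_0}$ uniformly recurrent (Sturmian) words with pairwise distinct factor sets and then note that $u\mapsto\age(P_u)$ is finite-to-one because $\age(P_u)$ determines $\fac(u)$ up to reversal, so $2^{\aleph_0}$ distinct path-ages remain after quotienting. The only cosmetic difference is that the paper uses a slightly coarser equivalence with classes of size at most four (folding in complementation ${\dot+}\mathbf{1}$, which it needs later for Proposition \ref{propdistinctages}), whereas you quotient by reversal alone (classes of size at most two) and add the pleasant observation that Sturmian factor languages are reversal-closed, making the map outright injective on your family.
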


We will need this fact and more, namely that the graphs $\widehat G_{(u, \star)}$, that we define in Subsection \ref{subsection:lexicographicsum}, have distinct ages (Proposition  \ref{propdistinctages}).  For this,  we start with the following notation and observations. If $v:=v_0\dots v_{n-1}$ is a finite word, let $v^d:=v_{n-1}\cdots v_0$ its reverse;  if $X$ is a set of words, set $X^d:= \{v^d: v\in X\}$. If $\fac(u)$ is  the age of a word $u$ on $\NN$, then  $\fac (u)^d$ is the set of factors of a word on $\NN^*= \{0, -1, \dots, -n \dots \}$ but in general, it is not necessarily  the age of a word on $\NN$. Except if the word $u$ is recurrent and especially if the word is uniformly recurrent. Next, let $\bf 1$ be the constant word equal to $1$; if $v:=v_0\dots v_{n-1}$ is a word on $\{0,1\}$, denote by $v {\dot+}\bf 1$ the word $(v_0 {\dot+}1 \dots  v_{n-1}{\dot+}1)$ where the sum ${\dot+}$ is modulo $2$. If $X$ is a set of words, set $X{\dot+}{\bf{1}}:=  \{v {\dot+} {\bf {1}}: v\in X\}$. Note that if $X$ is the age of a uniformly recurrent word, then $X{\dot+}\bf 1$ is too.

\begin{definition}\label{def:equivalence} Two ages  $\mathcal{A}$ and $\mathcal{A}'$ of uniformly recurrent words are \emph{equivalent} if $\mathcal{A}\in \{\mathcal{A}', \mathcal{A}^{'d},  \mathcal {A'}{\dot+}{\bf 1}, \mathcal {A}^{' d}{\dot+}{\bf 1}\}$.
\end{definition}
This is an equivalence relation and each equivalence class has at most four elements. From  $2^{\aleph_0}$ uniformly recurrent words with distinct ages  we can extract  $2^{\aleph_0}$ ages of words which are not pairwise equivalent. In particular these words  yield $2^{\aleph_0}$ labelled paths with distinct ages. This proves Lemma \ref{fact1} (and more).

\section{A general setting for minimality}

\begin{definition}
Let $\mathcal{C}$ be a class of finite graphs. A graph $G$ is \emph{$\mathcal{C}$-minimal} if $\age(G)$, the age of $G$, contains $\mathcal{C}$ and every induced subgraph $G'$ of $G$ such that $\mathcal{C} \subseteq \age(G')$ embeds $G$. Equivalently, $G$ is minimal among the graphs $G''$ such that $\mathcal{C} \subseteq \age(G'')$,  the collection of these $G''$ being quasi-ordered by embeddability.
\end{definition}

\begin{example} If $\mathfrak{P}$ is the collection of all finite paths, we say that a $\mathfrak{P}$-minimal graph is \emph{path-minimal}. \end{example}

A natural question is:

\begin{question}Given a class $\mathcal{C}$ of finite graphs, does every graph such that $\mathcal{C}\subseteq \age(G)$ embeds some  $\mathcal{C}$-minimal graph?
\end{question}

We have no answer, except in some special cases.

For example, we have:
\begin{fact} \label{fact4} If $G$ is a path-minimal graph and $\age(G)=\downarrow \mathfrak{P}$, the set of graphs which embed in some finite path, then $G$ is a direct sum of finite paths of unbounded  length.
\end{fact}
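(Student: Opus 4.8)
The plan is to read off the structure of $G$ from the age hypothesis and then use path-minimality to eliminate infinite components. First I would record that $\downarrow\mathfrak{P}$ is precisely the class of finite disjoint unions of finite paths: these are exactly the graphs occurring as induced subgraphs of a single $P_n$ (keep the maximal runs of consecutive vertices). Since $\age(G)=\downarrow\mathfrak{P}$, no finite induced subgraph of $G$ may have a vertex of degree $3$ or a cycle. Indeed, if a vertex $v$ had three neighbours $a,b,c$, then either two of them are adjacent, yielding an induced triangle $C_3$, or none are, yielding an induced $K_{1,3}$; neither $C_3$ nor $K_{1,3}$ embeds in a path, so both are barred from $\downarrow\mathfrak{P}$. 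Hence $G$ has maximum degree at most $2$. With maximum degree at most $2$ every cycle is automatically chordless, so an induced cycle $C_n$ would belong to $\age(G)$, which is again impossible. Therefore $G$ is acyclic of maximum degree at most $2$, i.e. a disjoint union of paths, each of which is finite, a one-way infinite path $P_\infty$, or a two-way infinite path on $\ZZ$.

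The crux of the argument is to show that $G$ has no infinite component, and this is the only place path-minimality is used. Suppose for contradiction that some component $C$ is infinite. Inside the infinite path $C$ I would choose pairwise non-adjacent finite subpaths $Q_1,Q_2,\dots$ of strictly increasing lengths (consecutive blocks of $C$ separated by at least one omitted vertex), and let $G'$ be the induced subgraph of $G$ on $\bigcup_k V(Q_k)$. Because each $Q_k$ is a whole stretch of $C$ and distinct blocks are separated by a gap, $G'$ is exactly the direct sum $\bigoplus_k Q_k$, a disjoint union of finite paths of unbounded length; in particular $G'$ contains induced paths of unbounded length. By path-minimality, $G$ embeds into $G'$. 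But such an embedding carries the connected infinite component $C$ onto a connected induced subgraph of $G'$, which must lie inside a single component of $G'$. Each component of $G'$ is a finite path, so it cannot contain an isomorphic copy of the infinite graph $C$, a contradiction.

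It remains to conclude. Having ruled out infinite components, $G$ is a disjoint union of finite paths. Since every induced path is connected it lies in a single component, so the detour $D_G$ equals the supremum of the lengths of these components; as $G$ is path-minimal it contains induced paths of unbounded length, whence the component lengths are unbounded. Thus $G=\bigoplus_n P_{k_n}$ for an unbounded sequence of lengths, that is, $G$ is a direct sum of finite paths of unbounded length, as claimed. The only genuinely delicate point is the second paragraph: one must manufacture from the putative infinite component an induced subgraph that simultaneously witnesses unbounded induced paths (so that minimality applies) yet is too fragmented to host $C$; everything else is bookkeeping about the forbidden induced subgraphs $K_{1,3}$, $C_3$ and $C_n$ of paths.
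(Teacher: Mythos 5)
Your proof is correct and follows essentially the same route as the paper: read off from $\age(G)=\downarrow\mathfrak{P}$ that $G$ is a disjoint union of paths, then use path-minimality against a putative infinite component by fragmenting it into non-adjacent finite blocks. You merely spell out what the paper leaves implicit (the degree/cycle argument for the decomposition, and the connectivity contradiction showing the infinite component cannot embed into the fragmented sum), which if anything makes your version more complete than the paper's two-line proof.
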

\begin{proof}Since $\age(G)$ is equal to $\downarrow \mathfrak{P}$, the graph $G$ is the direct sum of paths, some finite and some infinite. If no connected component of $G$ is infinite, then the length  of these connected  components is unbounded and $G$ is equimorphic to a direct sum of finite paths of arbitrarily large length. Else if some connected component of $G$ is infinite it embeds a direct sum of finite paths of unbounded length.
\end{proof}

For an other example,  see Theorem \ref{thm:incomparability graph}.

\begin{question}Let $G$ be a graph such that $\mathcal{C}\subseteq \age(G)$. Does $G$ embed a graph $G'$ such that $\age(G')$ is minimal in the sense that for no induced subgraph $G''$ of $G'$ such that $\mathcal{C}\subseteq \age(G'')$, $\age (G'')$ strictly included in $\age(G')$?
\end{question}

The following are obvious observations.
\begin{enumerate}[$(1)$]
  \item A positive answer to Question 1 yields a positive answer to Question 2.
  \item If $\{G' : G'\leq G\, \rm{ and }\, \mathcal{C}\subseteq \age(G')\}$ is well founded, then the answer to Question 1 is positive.
  \item If the collection of ages of induced subgraphs $G'$ of $G$ containing $\mathcal{C}$ is well founded when ordered by set inclusion, then the answer to Question 2 is positive.
\end{enumerate}

\subsection{Links to other notions of minimality}

\begin{definition}
A graph $G$ is \emph{minimal  for its age} (or simply \emph{minimal}) if it embeds in every induced subgraph $G'$ of $G$ with the same age as $G$ \cite{pouzet79}.
\end{definition}

\begin{fact}\label{fact2} If $G$ is $\mathcal{C}$-minimal, then it is minimal.\end{fact}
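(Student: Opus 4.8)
The plan is to unwind the two definitions and observe that the statement is essentially immediate. Recall that $G$ being \emph{minimal} (for its age) means that $G$ embeds in every induced subgraph $G'$ of $G$ satisfying $\age(G')=\age(G)$. So I would start by fixing an arbitrary induced subgraph $G'$ of $G$ with $\age(G')=\age(G)$, and aim to show that $G$ embeds in $G'$, that is $G\leq G'$.

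First I would invoke the hypothesis that $G$ is $\mathcal{C}$-minimal. By the definition of $\mathcal{C}$-minimality, this gives two pieces of information: that $\mathcal{C}\subseteq \age(G)$, and that every induced subgraph $H$ of $G$ with $\mathcal{C}\subseteq \age(H)$ embeds $G$. The key step is to check that $G'$ is exactly such a subgraph. Since $\mathcal{C}\subseteq \age(G)$ and, by our choice of $G'$, $\age(G')=\age(G)$, we immediately get $\mathcal{C}\subseteq \age(G')$. Thus $G'$ is an induced subgraph of $G$ containing $\mathcal{C}$ in its age.

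Applying the defining property of $\mathcal{C}$-minimality to $G'$, I conclude that $G'$ embeds $G$, i.e.\ $G\leq G'$. As $G'$ was an arbitrary induced subgraph of $G$ with the same age as $G$, this is precisely the assertion that $G$ is minimal for its age, completing the argument.

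There is no real obstacle here: the proof is a one-line deduction from the definitions, the only point requiring care being to track the direction of embeddability (``$G'$ embeds $G$'' meaning $G\leq G'$) and to use the equality $\age(G')=\age(G)$ to transport the condition $\mathcal{C}\subseteq \age(G)$ down to $G'$. The content of the fact is conceptual rather than technical: it records that the age-relative notion of minimality introduced via an external class $\mathcal{C}$ is a strengthening of Pouzet's notion of minimality for one's own age.
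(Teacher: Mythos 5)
Your proof is correct and is essentially identical to the paper's: both fix an induced subgraph $G'$ with $\age(G')=\age(G)$, use $\mathcal{C}$-minimality to get $\mathcal{C}\subseteq \age(G)=\age(G')$, and then apply the defining property of $\mathcal{C}$-minimality to conclude $G\leq G'$. Nothing is missing.
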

\begin{proof}Let $G'$ be an induced subgraph of $G$ such that $\age(G')=\age(G)$. Since $G$ is $\mathcal{C}$-minimal, $\mathcal{C}\subseteq \age(G)$. Since $\age(G')=\age(G)$ we have $\mathcal{C}\subseteq \age(G')$. From the $\mathcal{C}$-minimality of $G$ we deduce that $G$ embeds into $G'$ proving that $G$ is minimal.
\end{proof}

\begin{question} Does there exist a graph such that no induced subgraph with the same age is minimal?(\cite{pouzet79}).
\end{question}

In \cite{pouzet79}, examples of multirelations (not graphs) yield a positive answer to that question.

\begin{fact}\label{fact3} If $G$ is a graph as in Question 3, then for $\mathcal{C}= \age(G)$, $G$ does not embed a $\mathcal{C}$-minimal graph.\end{fact}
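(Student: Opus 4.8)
The plan is to argue by contradiction, exploiting the fact that the age of $G$ plays two roles at once: it is the prescribed class $\mathcal{C}$, and it is an automatic upper bound on the age of any graph embedded in $G$. So suppose toward a contradiction that $G$ embeds some $\mathcal{C}$-minimal graph $H$, say $H \leq G$. From $\mathcal{C}$-minimality of $H$ its definition yields $\mathcal{C} \subseteq \age(H)$, while $H \leq G$ forces $\age(H) \subseteq \age(G)$. Since $\mathcal{C} = \age(G)$ by hypothesis, these two inclusions sandwich $\age(H)$ between $\mathcal{C}$ and $\age(G)$, giving $\age(H) = \age(G)$.

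Next I would transport $H$ back into $G$ as a genuine induced subgraph. By the definition of embedding, $H$ is isomorphic to some induced subgraph $G'$ of $G$; since both the age and the property of being minimal for one's age are invariant under isomorphism, $G'$ again satisfies $\age(G') = \age(G)$, and $G'$ is minimal exactly when $H$ is. At this point I invoke Fact \ref{fact2}, which asserts that every $\mathcal{C}$-minimal graph is minimal for its age. Hence $H$, and therefore $G'$, is minimal.

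This exhibits an induced subgraph $G'$ of $G$ that has the same age as $G$ and is minimal for its age --- precisely the configuration forbidden by the assumption that $G$ is a graph as in Question 3, namely that no induced subgraph of $G$ with the same age as $G$ is minimal. The contradiction establishes that $G$ embeds no $\mathcal{C}$-minimal graph.

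There is no genuinely hard step here; the whole argument is a short chain of inclusion-chasing and definitional reductions. The one point meriting a little care is the sandwiching of ages: one must notice that the choice $\mathcal{C} = \age(G)$ makes the defining inclusion $\mathcal{C} \subseteq \age(H)$ of $\mathcal{C}$-minimality collide with the automatic inclusion $\age(H) \subseteq \age(G)$, so that any embedded $\mathcal{C}$-minimal graph is forced to reproduce the entire age of $G$. Once this observation is in place, Fact \ref{fact2} supplies the minimality that contradicts the hypothesis, and the proof is complete.
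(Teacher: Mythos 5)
Your proof is correct and follows essentially the same route as the paper: argue by contradiction, use Fact \ref{fact2} to conclude that an embedded $\mathcal{C}$-minimal graph is minimal for its age, and contradict the defining property of a graph as in Question 3. The paper's own proof is just a terser version of yours, leaving implicit the age-sandwiching ($\mathcal{C}=\age(G)\subseteq\age(G')\subseteq\age(G)$) and the transport of the embedded copy to a genuine induced subgraph, both of which you spell out correctly.
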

\begin{proof} Suppose that some induced subgraph of $G'$ of $G$, with $\mathcal{C}\subseteq \age(G')$, is $\mathcal{C}$-minimal. Then it follows from Fact 2 that $G'$ is minimal. A contradiction.
\end{proof}

\subsection{Indivisibility and minimality}

\begin{definition}An age $\mathcal{A}$ of finite graphs is \emph{indivisible} if for every graph $G$ such that  $\age(G)= \mathcal A$ and every partition of the vertex set of $G$ into two parts $A$ and $B$, one the induced graphs of $G$ on $A$ or $B$ has age $\mathcal{A}$. Equivalently, an age $\mathcal{A}$ of finite graphs is indivisible  if for every $S\in \mathcal{A}$ there is $\overline{S}\in \mathcal{A}$ such that for every partition $A$, $B$ of the vertex set of $\overline{S}$, one of the induced graphs $\overline{S}_{\restriction A}$, $\overline{S}_{\restriction B}$ embeds $S$ (cf.  \cite{pouzet79} p.331 and also \cite{fraissetr}).
\end{definition}

Let $G_n=(V_n,E_n)$ for $n\in \NN$ be a family of graphs having pairwise disjoint vertex sets. We define the \emph{direct sum} of $(G_n)_{n\in \NN}$, denoted $\oplus_n G_n$, is the graph whose vertex set is $\cup_{n\in \NN}V_n$ and edge set $\cup_{n\in \NN}E_n$. The \emph{complete sum} of $(G_n)_{n\in \NN}$, denoted $\sum_n G_n$, is the graph whose vertex set is $\cup_{n\in \NN}V_n$ and edge set $\cup_{i\neq j}\{\{v,v'\} : v\in V_i \wedge v'\in V_j\}\cup  \cup_{n\in \NN}E_n$.

For an  indivisible age $\mathcal A$, graphs that are $\mathcal A$-minimal are easy to describe.

\begin{theorem}If $\mathcal{A}$ is an indivisible age of finite graphs, then every graph $G$ with $\mathcal{A}\subseteq \age (G)$ embeds some graph $G'$ which is $\mathcal{A}$-minimal. In fact, either $G$ embeds a direct sum $\oplus_n G_n$ or a complete sum $\sum_n G_n$ of finite graphs $G_0,G_1,\cdots, G_n,\cdots $ such that $\mathcal{A}=\downarrow \{G_n : n<\omega\}$, the set of graphs which embed in some $G_n$.
\end{theorem}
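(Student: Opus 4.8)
The plan is to realize the required $\mathcal{A}$-minimal graph as a direct sum or a complete sum living inside $G$, built one finite piece at a time, and then to verify minimality directly. First I would record two preliminaries. Since $\mathcal{A}$ is an age of finite graphs it is countable and up-directed, so it carries a cofinal sequence $S_0\le S_1\le\cdots$; I assume $\mathcal{A}$ is infinite, the statement being vacuous otherwise. The finitary form of indivisibility then iterates: by the usual induction on the number of colours, for every $S\in\mathcal{A}$ and every $c$ there is $\overline{S}\in\mathcal{A}$ such that every colouring of $V(\overline{S})$ with $c$ colours admits a monochromatic induced copy of $S$. I would also note that indivisibility forces inexhaustibility: fixing a graph $K$ with $\age(K)=\mathcal{A}$, embedding $S$ on a finite set $A\subseteq V(K)$ and partitioning $V(K)$ into $A$ and its complement, the finite part $A$ has only a finite age and so cannot carry the infinite $\mathcal{A}$; hence the complement does, and therefore contains a copy of any prescribed $S'$ disjoint from the given copy of $S$.

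With these in hand I would build pairwise disjoint finite sets $B_0,B_1,\dots\subseteq V(G)$ together with a type $\varepsilon\in\{0,1\}$, maintaining that $G_{\restriction B_n}$ embeds $S_n$ (so that $\{G_{\restriction B_n}\}$ is cofinal and $\downarrow\{G_{\restriction B_n}\}=\mathcal{A}$) and that the bipartite connection between any two pieces is uniform of type $\varepsilon$ --- empty when $\varepsilon=0$, complete when $\varepsilon=1$. At stage $n$, with $F_n:=\bigcup_{i<n}B_i$ finite, inexhaustibility together with $\mathcal{A}\subseteq\age(G)$ produces a copy of a strongly indivisible member $\overline{S_n}$ inside $G$ and disjoint from $F_n$; colouring its vertices by their trace on $F_n$ (at most $2^{|F_n|}$ colours) and applying the finite-colour form of indivisibility yields a monochromatic sub-copy, that is, a copy $B_n$ of $S_n$ all of whose vertices have the same trace $T\subseteq F_n$ on $F_n$.

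The crux, and the step I expect to be the main obstacle, is to turn this \emph{constant} trace into an \emph{all-or-nothing} one, uniformly across all stages, so that the pieces assemble into a genuine direct or complete sum. Applying the single-vertex case of indivisibility repeatedly --- splitting the current full-age reservoir into the neighbours and non-neighbours of one old vertex at a time and retaining a full-age side --- already gives, for any finite $F$, a reservoir of age $\mathcal{A}$ on which each vertex of $F$ is seen all-or-nothing; what is \emph{not} automatic is that the common trace can be forced empty or full. Concretely I must establish the dichotomy that, for the indivisible age $\mathcal{A}$, either the common non-neighbourhood of every finite subset of every graph of age $\mathcal{A}$ is again of age $\mathcal{A}$ (the direct-sum alternative), or the common neighbourhood is (the complete-sum alternative). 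I would attack this global $0$--$1$ law by a further Ramsey/fusion argument: colour the pairs of an infinite family of disjoint copies by the binary datum ``this pair is completely joined'', using the constant-trace normalisation above to force the only homogeneous possibilities to be ``all pairs complete'' and ``all pairs empty''; ruling out a residual ``mixed'' homogeneous class is exactly where indivisibility must be spent. Granting this, the construction delivers $G':=\oplus_n G_n$ or $G':=\sum_n G_n$ with $G_n:=G_{\restriction B_n}$, and $\age(G')=\downarrow\{G_n:n<\omega\}=\mathcal{A}$, since the same dichotomy yields that $\mathcal{A}$ is closed under finite disjoint unions, respectively finite joins, so the sum creates no finite subgraph outside $\mathcal{A}$.

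Finally I would verify that $G'$ is $\mathcal{A}$-minimal. Let $G''$ be an induced subgraph of $G'$ with $\mathcal{A}\subseteq\age(G'')$. In the direct-sum case no edges run between distinct components, so $G''$ is itself a direct sum of the traces $G''\cap G_n$, and since $\age(G'')\supseteq\mathcal{A}$ these traces must remain cofinal in $\mathcal{A}$ (otherwise $\age(G'')$ would be bounded strictly below $\mathcal{A}$). Extracting such a cofinal subfamily exhibits an induced $\oplus_n G_n'$ inside $G''$ with $\downarrow\{G_n'\}=\mathcal{A}$, which is equimorphic to $G'$; hence $G'\le G''$. The complete-sum case is dual. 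Thus $G'$ is $\mathcal{A}$-minimal, and as $G'\le G$ we conclude that $G$ embeds an $\mathcal{A}$-minimal graph of the asserted form, proving both assertions of the theorem.
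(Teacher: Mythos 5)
The paper offers no proof to compare against: it explicitly defers this theorem to Lemma 4.2 of Milner--Pouzet and Propositions IV.2.3.1--3 of Pouzet's 1979 paper, proving only the clique illustration (via Ramsey's theorem on quadruples). So your argument must stand on its own, and it has two genuine gaps. The first is the step you yourself concede (``Granting this''), and both mechanisms you propose for it are defective. The dichotomy you say you ``must establish'' --- that for an indivisible age $\mathcal{A}$, either the common non-neighbourhood of every finite subset of every graph of age $\mathcal{A}$ has age $\mathcal{A}$, or the common neighbourhood does --- is false: for $\mathcal{A}$ the (indivisible) age of \emph{all} finite graphs, the direct sum of all finite graphs refutes the neighbourhood alternative and the complete sum refutes the non-neighbourhood alternative. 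Worse, no property of $\mathcal{A}$ alone can decide the type of sum, because the type depends on $G$: for $\mathcal{A}$ the age of cliques, $G=\oplus_n K_n$ embeds only the direct sum, while an infinite clique embeds only the complete one. As for the Ramsey step, ``ruling out'' a mixed homogeneous class is impossible: after your constant-trace normalisation the connection between $B_m$ and $B_n$ ($m<n$) is governed by a subset $P_{m,n}\subseteq B_m$ (vertices of $B_m$ joined to all of $B_n$, the rest to none), and families in which $\varnothing\neq P_{m,n}\neq B_m$ for every pair genuinely occur (inside the Rado graph, for instance); nothing in the construction prevents them. What is needed is refinement, not exclusion: (i) for fixed $m$ the set $P_{m,n}$ takes finitely many values, so a diagonal pigeonhole extraction makes $P_{m,n}=P^{*}_{m}$ independent of $n$; (ii) taking each $B_m$ to be a copy of the indivisible companion $\overline{S_m}$, the $2$-colouring of $B_m$ by $P^{*}_{m}$ versus its complement yields a monochromatic copy $C_m$ of $S_m$, whose connection to all later pieces is now full or empty; (iii) a final pigeonhole on that type gives the direct or the complete sum. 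Steps (i) and (ii) --- the stabilisation across stages and the spending of indivisibility \emph{inside} each piece --- are exactly what your text lacks.

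The second gap is in your minimality verification. The claim that the traces $G''\cap G_n$ ``must remain cofinal in $\mathcal{A}$'' is false, and its justification overlooks that a copy of a member of $\mathcal{A}$ may spread over several components, so $\age(G'')$ contains $\mathcal{A}$ regardless: for $\mathcal{A}$ the age of edgeless graphs and $G'$ a direct sum of finite edgeless graphs of unbounded size, the subgraph $G''$ taking one vertex from each component satisfies $\mathcal{A}\subseteq \age(G'')$ although the two-vertex edgeless graph embeds in no single trace. (Similarly, your claim $\age(G')=\downarrow\{G_n\}$ is false in general --- for cliques, $\oplus_n K_n$ has $K_1\oplus K_1$ in its age but not in $\mathcal{A}$ --- though the theorem never requires it.) The repair is a transfer lemma that your outline in fact uses tacitly elsewhere (the ``retaining a full-age side'' step): if $\mathcal{A}$ is indivisible, $\mathcal{A}\subseteq\age(H)$ and $V(H)=A\cup B$, then $\mathcal{A}\subseteq\age(H_{\restriction A})$ or $\mathcal{A}\subseteq\age(H_{\restriction B})$; note that indivisibility alone gives, for each $S\in\mathcal{A}$, a side depending on $S$, and one must invoke up-directedness of $\mathcal{A}$ to make the side uniform. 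With this lemma, every tail of components of $G''$ has age containing $\mathcal{A}$ (a finite union of finite components has finite age), and one greedily embeds $G_0,G_1,\dots$ into pairwise disjoint blocks of components; disjoint blocks carry no (respectively all) cross edges, so the union of these copies is an induced copy of $\oplus_n G_n$ (respectively $\sum_n G_n$) in $G''$, with no cofinality of traces needed.
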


This result is essentially Lemma 4.2 of \cite{milner-pouzet}. It is based on Propositions IV.2.3.3,  IV.2.3.1 and IV.2.3.2, p.338, 339  of  \cite {pouzet79}. We do not give a proof. We prefer to give the following illustration (Lemma 4.1 of \cite{milner-pouzet}).

\begin{proposition}If a graph embeds  finite cliques of unbounded cardinality, then either it embeds an infinite clique or a direct sum of finite cliques of unbounded cardinality.
\end{proposition}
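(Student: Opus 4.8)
The plan is to apply the preceding Theorem to the age generated by cliques. First I would set $\mathcal{A} := \downarrow \{K_n : n < \omega\}$, the collection of finite graphs that embed into some finite clique, where $K_n$ denotes the clique on $n$ vertices. Since every induced subgraph of a clique is again a clique, $\mathcal{A}$ is exactly the set of all finite cliques; it is hereditary by construction and up-directed because any two cliques $K_m, K_n$ both embed into $K_{\max(m,n)}$, so $\mathcal{A}$ is indeed an age. The hypothesis that $G$ embeds finite cliques of unbounded cardinality is precisely the statement that $K_n \in \age(G)$ for every $n$, that is, $\mathcal{A} \subseteq \age(G)$.

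The key verification is that $\mathcal{A}$ is indivisible. Using the finite reformulation in the definition, given $S = K_n$ I would take $\overline{S} := K_{2n-1}$: for any partition of its $2n-1$ vertices into parts $A$ and $B$, the pigeonhole principle forces one part, say $A$, to have at least $n$ vertices, and since every subset of a clique induces a clique, $\overline{S}_{\restriction A}$ embeds $K_n = S$. Thus $\mathcal{A}$ is indivisible; note that no Ramsey argument is needed here, only pigeonhole, because all induced subgraphs are automatically cliques.

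With indivisibility established, the Theorem applies and yields that $G$ embeds either a direct sum $\oplus_n G_n$ or a complete sum $\sum_n G_n$ of finite graphs $G_n$ with $\mathcal{A} = \downarrow\{G_n : n < \omega\}$. Since each $G_n$ lies in $\mathcal{A}$, each $G_n$ is a finite clique, and since their downward closure must recover every finite clique, their cardinalities are unbounded. In the direct sum case this is exactly a direct sum of finite cliques of unbounded cardinality. In the complete sum case I would observe that $\sum_n G_n$ is itself a clique: the edges inside each component are all present because the $G_n$ are cliques, and all cross-component pairs are edges by the definition of complete sum; as there are infinitely many components this clique is infinite. Hence $G$ embeds an infinite clique, which gives the two alternatives in the statement.

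I do not expect a serious obstacle, since the statement is designed as a direct illustration of the Theorem. The only points requiring care are the two identifications above, namely recognising that the members $G_n$ furnished by the Theorem must be cliques of unbounded size, and that a complete sum of cliques collapses to a single infinite clique. Both are immediate once $\mathcal{A}$ is taken to be the age of finite cliques.
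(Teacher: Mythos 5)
Your proof is correct, but it takes a different route from the paper. The paper does \emph{not} derive this proposition from the preceding theorem on indivisible ages; it does the opposite: having declined to prove that theorem (which it attributes to Lemma 4.2 of Milner--Pouzet), it offers this proposition as a self-contained illustration and proves it directly from Ramsey's theorem on quadruples. Concretely, the paper picks a clique $G_m$ on $m$ vertices inside $G$ for each $m$, defines $f(n,m)=v_{n,m}$ on pairs $n<m$, partitions the $4$-element subsets $\{i,j,k,l\}$ of $\NN$ into three classes according to whether $f(i,j)$ is equal, adjacent, or non-adjacent to $f(k,l)$, and extracts an infinite homogeneous set; homogeneity in the ``adjacent'' class yields an infinite clique, while the ``non-adjacent'' class yields a direct sum of cliques of unbounded size. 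Your argument instead treats the proposition as a corollary of the general theorem: you verify that the age of finite cliques is indivisible via pigeonhole (taking $\overline{S}=K_{2n-1}$ for $S=K_n$, which is correct), and then correctly identify the two outputs of the theorem, noting in particular that a complete sum of cliques collapses to a single infinite clique. What your approach buys is brevity and a clean demonstration of how the general machinery specializes; what it costs is self-containment, since within this paper the theorem you invoke is stated without proof, whereas the Ramsey argument stands on its own. Your derivation is not circular relative to this paper's logical structure (the theorem is credited to results of Pouzet 1979, not to this proposition), but a reader wanting a complete proof would still have to chase the reference, which is precisely what the paper's Ramsey proof avoids.
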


This proposition is readily obtained from Ramsey's Theorem on quadruples. For the reader's convenience we provide a proof. For each integer $m>0$, choose a complete graph $G_m$ on $m$ vertices and set $V(G_m):=\{v_{0,m},\cdots,v_{i,m},\cdots,v_{m-1,m}\}$. Let $[\NN]^2$ be the set of pairs $\{n,m\}$ with $n<m$ and $f: [\NN]^2 \rightarrow G$ be the map defined by $f(n,m)=v_{n,m}$. Partition the set $[\NN]^4$ of 4-tuples $\{i,j,k,l\}$ into three classes according to whether $f(i,j)$ is equal, adjacent, or not adjacent to $f(k,l)$. From Ramsey's Theorem there exists an infinite subset $X$ of $\NN$ such that $[X]^4$ is in one class. The conclusion follows.

\subsection{Minimality and primality}\label{subsection:primality}

Let $G:=(V,E)$ be a graph. According to Gallai \cite{gallai}, a subset $A$ of $V$ is \emph{autonomous} (also called \emph{module} in the literature) in $G$ if for every $v \not \in A$, either $v$ is adjacent to all vertices of $A$ or $v$ is not adjacent
to any vertex of $A$. Clearly, the empty set, the singletons in $V$
and the whole set $V$ are autonomous in $G$; they are called
\emph{trivial}. An undirected graph is called \emph{indecomposable}
if all its autonomous sets are trivial. With this definition, graphs
on a set of size at most two are indecomposable. Also, there are no
indecomposable graphs  on a three-element set. An indecomposable
graph with more than three elements will be called \emph{prime}.
The graph $P_4$, the path on four vertices, is prime. In fact, as it
is well known, every prime graph contains an induced $P_4$ (Sumner
\cite{sumner} for finite graphs and Kelly \cite{kelly85} for infinite
graphs). Furthermore,  every infinite prime graph contains an
induced countable prime graph \cite{ille}.

\begin{definition}\label{minimality-pou-zag}\cite{pouzet-zaguia} A an infinite graph $G$ is \emph{minimal prime} if $G$ is prime and every prime induced subgraph with the same cardinality as $G$ embeds $G$.
\end{definition}

In \cite{pouzet-zaguia}, the authors provided among other things, the list of minimal prime graphs with no infinite cliques or no infinite independent sets.

\begin{definition}An age $\mathcal{A}$ of graphs is \emph{minimal prime} if $\mathcal{A}$ contains  prime graphs of unbounded cardinality but every proper hereditary subclass contains only finitely many  prime graphs.
\end{definition}

\begin{problem}\cite{oudrar}(Problem 11 page 99 subsection 5.3.1)
If a graph $G$ (more generally a binary relational structure) is minimal prime, is its age minimal prime?
\end{problem}

\section{Path-minimality}

This section is devoted  to the proof  of Theorem  \ref{thm:uncoutable ages}. In Subsection \ref{subsection:lexicographicsum} we define the lexicographical sum of labelled graphs, the set of labels being endowed with an operation $\star$ with $0$-$1$ values. This allows us to construct a path-minimal graph  $\widehat Q_{(u, \star)}$ attached to each uniformly recurrent word $u$ (Proposition \ref{prop:pathminimal}). In Subsection \ref{congruences-path-minimal} we consider path-minimal graphs attached to some periodic sequences and prove that their ages are pairwise incomparable (Proposition \ref{prop:congruences}). In Subsection \ref{generalconstruction} we show first that for some operations $\star$  on $L:= \{0,1\}$  each long enough path in $\widehat Q_{(u, \star)}$  is contained in some component (Lemma \ref{lem:longenough}). Then we prove that there are $2^{\aleph_0}$ such $\widehat Q_{(u, \star)}$'s whose ages are wqo and  incomparable w.r.t. inclusion. This completes the proof of Theorem \ref{thm:uncoutable ages}.

\subsection{Lexicographical sum of labelled graphs}\label{subsection:lexicographicsum}

\begin{definition}A \emph{labelled graph} is a pair $(G,\ell)$ where $G:=(V,E)$ is a graph and $\ell$ is a map from $V$ into a set $L$ of labels. If $H$ denotes a labelled graph, then $\widehat H$ denotes the underlying graph.
In our context, the set $L$ has no order structure, so when we say  that $(G,\ell)$ \emph{embeds into} $(G',\ell')$ and write $(G,\ell) \leq (G', \ell')$, we mean that there is an embedding $h$ of $G$ into $G'$ such that $\ell'(h(v))=\ell(v)$ for all $v\in V$.
\end{definition}

If $L=\{0,1\}$, we may replace a labelling $\ell$ by a loop at every vertex $v$ such that $\ell(v)=1$ and suppress the label $0$.

In the sequel  $L$ is endowed with a map $\star:L\times L\rightarrow \{0,1\}$ and we set $i\star j:= \star (i,j)$. We denote by $\star^d$ the dual of $\star$, defined by $i\star^dj:= j\star i$.

Let $(G_i,\ell_i)_{i\in I}$ be a family of labelled graphs, indexed by a chain $C:=(I,\leq)$,  and suppose that the sets $V(G_i)$, for $i\in I$, are pairwise disjoint. The \emph{lexicographical sum} $\sum_{\star i\in C} (G_i,\ell_i)$ of the family  $(G_i,\ell_i)_{i\in I}$ is the labelled graph $(G,\ell)$ whose vertex set is $\bigcup_{i\in I}V(G_i)$ and edge set
\[E:=(\cup_{i\in I}E_i)\cup \{\{x_i,x_j\} : i<j,  x_i\in V(G_i),  x_j\in V(G_j) \mbox{ and } \ell(x_i)\star \ell(x_j)=1\}.\]
If $\star$ takes only  the value $0$, then $\sum_{\star i\in I} (G_i,\ell_i)$ is the direct sum of $(G_i,\ell_i)_{i\in I}$. On the other hand, if $\star$ takes only the  value $1$, then $\sum_{\star i\in I} (G_i,\ell_i)$ is the complete sum of $(G_i,\ell_i)_{i\in I}$.

If the sets $V(G_i)$'s are not pairwise  disjoint, we replace each $(G_i, \ell_i) $ by a copy on $\{i\} \times V(G_i)$. If all $(G_i, \ell_i)$'s are equal to the same labelled graph $(G, \ell)$ then the lexicographical sum of copies of $(G, \ell)$ indexed by the chain $C$ is the labelled graph
$(G, \ell)\cdot_{\star} C$ on $I\times V(G)$ such $\ell(i,x):= \ell_i(x)$  for each $(i,x)$ and for $i< j$  in $C$, $(i, x)$ and $(j,y)$ form an edge if and only if $\ell(i, x)\star \ell(j, y)=1$ (our notation agrees with the notation of lexicographical sum of chains). In the sequel we will mostly consider lexicographical sum of copies of a labelled graph $(G, \ell)$ indexed by the chain $\omega$ of the non-negative integers. If $G$ is a labelled graph over a set $L$ of labels  and $\star$ denotes the map from $L\times L$ to $\{0, 1\}$, we denote  by $G\cdot_{\star} \omega$ the lexicographical sum of copies of $G$ indexed by the chain $\omega$ of the non-negative integers;  we denote by $\widehat{G\cdot_{\star} \omega}$  the graph obtained by removing the labels of $G\cdot_{\star} \omega$. In the sequel,  our graphs $G$ will be paths labelled by uniformly recurrent sequences.

\begin{figure}[h]
\begin{center}
\includegraphics[width=120pt]{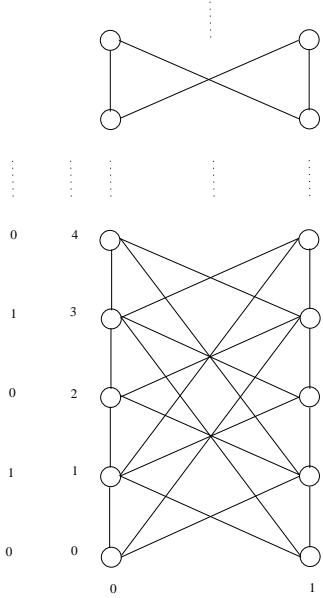}
\end{center}
\caption{Sum of two labeled paths. For $n\in \NN$ set $\ell(2n)=0$ and $\ell(2n+1)=1$. The operation $\star$ is the Boolean sum.}
\label{fig:fig1}
\end{figure}


\begin{remark}\emph{
The fact that our family of labelled graphs is indexed by a chain is enough to serve our purpose. But one could consider a more general notion of lexicographical sum in  a way that the fact that two vertices $x_i$ and $x_j$ are adjacent depends only upon the values of the labels $\ell(x_i)$ and $\ell(x_j)$. This amounts to impose that the sets $V_i$, for $i\in I$, are \emph{Fra\"{\i}ss\'e intervals} (a generalization of autonomous sets;  see \cite{fraisse84} and for a general development, see \cite{ehrenfeucht}) of $(G,\ell)$, that is:
\begin{multline}
\text{for every}\; x_i,x'_i\in V_i, y \not \in V_i,\\
\text{if}\; \ell(x_i)=\ell(x'_i), \text{then}\; \{x_i,y \}\;  \text{is an edge if and only if }\; \{x'_i,y \}\;  \text{is an edge}.
\end{multline}}
\end{remark}
A crucial property of lexicographical sums is the following

\begin{lemma}\label{sumincreasing} Let $(G_i,\ell_i)_{i\in I}$ and  $(G'_{i'},\ell'_{i'})_{i'\in I'}$  be two  families  of labelled graphs, indexed by the  chains $C:=(I,\leq)$ and $C':= (I', \leq)$ respectively. If there is an embedding $\varphi$ from $C$ in $C'$ and for each $i\in I$  an  embedding $f_i$ of $(G_i, \ell_i)$ in $(G'_{\varphi(i)}, \ell'_{\varphi(i)})$ then
$\bigcup_{i\in I}f_i$ is an embedding of $(G, \ell)\cdot_{\star} C$ in $(G', \ell')\cdot_{\star} C'$.
\end{lemma}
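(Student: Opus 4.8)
The plan is to verify that the map $f := \bigcup_{i \in I} f_i$ is a well-defined injection on the vertex set of $(G,\ell)\cdot_\star C$, that it preserves labels, and that it is an edge-isomorphism onto its image—that is, it preserves both edges and non-edges. Recall that the vertex set of $(G,\ell)\cdot_\star C$ is $I \times V(G)$, and a vertex $(i,x)$ sits in the copy $(G_i,\ell_i)$ living over the index $i$; the embedding $f_i$ carries this copy into the copy $(G'_{\varphi(i)}, \ell'_{\varphi(i)})$ over the index $\varphi(i) \in I'$. So I should think of $f$ as acting by $f(i,x) = (\varphi(i), f_i(x))$ after suitable identification of copies with their indices.

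First I would record \emph{well-definedness and injectivity}. Since the copies over distinct indices $i$ are supported on disjoint vertex sets, $f$ is well defined as a union of functions with disjoint domains. For injectivity: if $f(i,x) = f(j,y)$ then the images lie in the copies over $\varphi(i)$ and $\varphi(j)$; because $\varphi$ is injective (being an embedding of chains), we must have $\varphi(i) = \varphi(j)$, hence $i = j$, and then $f_i(x) = f_i(y)$ forces $x = y$ since each $f_i$ is injective. \emph{Label preservation} is immediate: within each copy, $f_i$ preserves labels by hypothesis, and the label of a vertex is computed inside its own copy, so $\ell'(f(i,x)) = \ell'_{\varphi(i)}(f_i(x)) = \ell_i(x) = \ell(i,x)$.

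The substantive step is checking that $f$ preserves adjacency \textbf{both ways}. I distinguish two cases according to whether two vertices lie in the same copy or in different copies. If $(i,x)$ and $(i,y)$ lie in the same copy, then by the definition of the lexicographical sum their adjacency in $(G,\ell)\cdot_\star C$ is exactly their adjacency inside $(G_i,\ell_i)$, which $f_i$ preserves and reflects as an embedding; the images $f_i(x), f_i(y)$ lie in the single copy $(G'_{\varphi(i)}, \ell'_{\varphi(i)})$, so their adjacency in $(G',\ell')\cdot_\star C'$ is likewise their adjacency inside that copy. If $(i,x)$ and $(j,y)$ lie in different copies with, say, $i < j$, then adjacency in the sum is governed by the label rule: $(i,x) \sim (j,y)$ iff $\ell(i,x) \star \ell(j,y) = 1$. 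Here I use that $\varphi$ is order-preserving, so $i < j$ implies $\varphi(i) < \varphi(j)$, and the images $f(i,x), f(j,y)$ lie over $\varphi(i) < \varphi(j)$; hence their adjacency in the target sum is determined by $\ell'(f(i,x)) \star \ell'(f(j,y)) = 1$. By label preservation this equals $\ell(i,x) \star \ell(j,y)$, so the two adjacency conditions coincide, giving the equivalence in both directions.

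\emph{The main obstacle}—really the only place where care is needed—is the cross-copy case, and specifically the fact that one must invoke \textbf{both} properties of $\varphi$: its injectivity (to separate the cases and to keep distinct copies distinct in the image) and its order-preservation (so that the relation $i<j$ is transported to $\varphi(i)<\varphi(j)$, which is what the label rule for the $\star$-sum is keyed to). If $\varphi$ merely embedded the underlying sets without respecting order, the asymmetric edge rule $\ell(x_i)\star\ell(x_j)=1$ for $i<j$ could be violated, since $\star$ need not be symmetric. Because the value of $\star$ is computed from labels alone and $f$ preserves labels exactly, once the correct index ordering is in place the edge condition transfers verbatim; thus $f$ is simultaneously an embedding that preserves and reflects edges, establishing that it embeds $(G,\ell)\cdot_\star C$ into $(G',\ell')\cdot_\star C'$.
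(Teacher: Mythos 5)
Your proof is correct and follows the same route as the paper, which simply declares the result immediate because cross-copy adjacency in a $\star$-sum depends only on the labels (and the order of indices, which $\varphi$ preserves). Your write-up is just a careful expansion of that one-line observation, with the well-definedness, injectivity, label-preservation, and both directions of the adjacency check spelled out explicitly.
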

The proof is immediate: the fact two vertices $(i, x)$ and $(j,y)$ with $i<j$ form an edge depends only upon the values of $\ell_i(x)\star\ell_j(y)$.

We have also:
\begin{lemma}\label{duality-equality} If $C^d$ is the dual of a chain $C$ then
$(G, \ell)\cdot_{\star} C:= (G, \ell)\cdot_{\star^{d}} C^{d}$.
\end{lemma}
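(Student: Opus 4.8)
The plan is to unwind both sides of the claimed identity $(G,\ell)\cdot_{\star} C = (G,\ell)\cdot_{\star^d} C^d$ directly from the definition of the lexicographical sum of copies of a labelled graph, and to check that they are literally the same labelled graph on the same vertex set—not merely isomorphic. First I would observe that both constructions have the same vertex set, namely $I\times V(G)$, since passing to the dual chain $C^d$ does not change the underlying set $I$, and both carry the same labelling $\ell(i,x):=\ell_i(x)$, which depends only on the copy index and the vertex of $G$, not on the order relation of the chain. So the only thing that can differ is the edge set, and the whole content of the lemma is that the two edge sets coincide.

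Next I would compare edges. Within a single component $\{i\}\times V(G)$ the edges are exactly the edges of the fixed copy of $G$, and these are unaffected by whether we use $C$ or $C^d$; so the intra-component edges agree trivially. For the cross-component edges, fix two distinct indices $i,j\in I$ and consider vertices $(i,x)$ and $(j,y)$. The key step is a careful case analysis on the order of $i$ and $j$. Say $i<j$ in $C$; then in $C^d$ we have $j<i$. On the left side, $(i,x)\sim(j,y)$ in $(G,\ell)\cdot_{\star} C$ iff the smaller index comes first, i.e. iff $\ell(i,x)\star\ell(j,y)=1$. On the right side, working in $C^d$ the smaller element is $j$, so $(j,y)$ plays the role of the first coordinate and $(i,x)$ the second; by definition of $\cdot_{\star^d}$, the pair is an edge iff $\ell(j,y)\,\star^d\,\ell(i,x)=1$. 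Now I invoke the definition $a\star^d b:=b\star a$ given just before Lemma~\ref{sumincreasing}, which gives $\ell(j,y)\,\star^d\,\ell(i,x)=\ell(i,x)\star\ell(j,y)$. Hence the edge condition on the right reduces to exactly the edge condition on the left, and by symmetry the case $j<i$ in $C$ is handled identically. Thus the two cross-component edge conditions agree for every pair, and the edge sets are equal.

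I do not expect any genuine obstacle here: the statement is essentially a bookkeeping identity, and the proof is the one-line remark already anticipated in the text that adjacency depends only on the value $\ell\star\ell$ evaluated on the pair taken in increasing order, combined with the defining identity $i\star^d j=j\star i$. The only point requiring care is to make sure one tracks \emph{which} coordinate is ``first'' after the order is reversed, so that the dualization of $\star$ is applied to the arguments in the correct (swapped) order; this is exactly where the definition $\star^d$ is designed to absorb the swap. A reader-facing proof can therefore be compressed to the single sentence: since reversing the chain $C$ to $C^d$ interchanges, for each pair of distinct indices $i,j$, which one is smaller, and since $\ell(j,y)\star^d\ell(i,x)=\ell(i,x)\star\ell(j,y)$, the adjacency relation is unchanged, so the two labelled graphs are identical.
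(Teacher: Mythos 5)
Your proof is correct: the paper states this lemma without proof (treating it as immediate, in the same spirit as the one-line justification of Lemma~\ref{sumincreasing}), and your definitional unwinding—same vertex set and labels, intra-component edges unaffected, and the cross-component edge condition transformed by $\ell(j,y)\star^d\ell(i,x)=\ell(i,x)\star\ell(j,y)$ when the order is reversed—is exactly the verification the authors leave implicit. Nothing is missing; the care you take in tracking which index is ``first'' after dualization is precisely the point of the definition of $\star^d$.
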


Let  $\mathcal{C}$ be a class of $L$-labelled graphs. Let $\widetilde{\mathcal{C_\star}}$ be the collection of $L$-labelled graphs which are lexicographical sums of $L$-labelled graphs belonging to $\mathcal{C}$ indexed by some chain and let ${\widetilde{\mathcal{C_\star}}}_{<\omega}$ be those that are finite sums.

\begin{theorem}\label{thm:wqo labelled}Let $\mathcal{C}$ be a class of $L$-labelled graphs.
\begin{enumerate}[{$(1)$}]
\item If $\mathcal{C}$ is a hereditary  class of $L$-labelled finite graphs, then ${\widetilde{\mathcal{C_{\star}}}}_{<\omega}$ is hereditary;
\item If $\mathcal{C}$ is an inexhaustible age of $L$-labelled graphs, then ${\widetilde{\mathcal{C_{\star}}}}_{<\omega}$  is an inexhaustible age too;
 \item If $\mathcal{C}$ is wqo then ${\widetilde{\mathcal{C_{\star}}}}_{<\omega}$ is wqo.
 \item If $\mathcal C$ consists of direct sums of labelled paths of bounded length  then the length of labelled paths in ${\widetilde{\mathcal{C_{\star}}}}_{<\omega}$ is bounded.
\end{enumerate}
\end{theorem}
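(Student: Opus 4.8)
The plan is to handle the four items by a single structural principle: restricting, combining and comparing a \emph{finite} lexicographical sum can all be done componentwise, because whether two vertices lying in different components are adjacent depends only on the values of their labels under $\star$, not on the vertices themselves. This is exactly the monotonicity recorded in Lemma \ref{sumincreasing}. For $(1)$ I would take $H=\sum_{\star i\in C}(G_i,\ell_i)$ with $C$ a finite chain and each $(G_i,\ell_i)\in\mathcal C$, and an induced labelled subgraph $(H',\ell')$ on a vertex set $W$. Putting $W_i:=W\cap V(G_i)$ and $C':=\{i\in C:\ W_i\neq\emptyset\}$, I would check that the graph induced on $W$ equals $\sum_{\star i\in C'}(G_i,\ell_i)_{\restriction W_i}$: an edge inside a component is an edge of $G_i$ restricted to $W_i$, while for $x\in W_i$, $y\in W_j$ with $i<j$ the pair $\{x,y\}$ is an edge of $H'$ iff $\ell'(x)\star\ell'(y)=1$, since $\ell'$ is the restriction of $\ell$. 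As $\mathcal C$ is hereditary, each $(G_i,\ell_i)_{\restriction W_i}\in\mathcal C$, whence $(H',\ell')\in{\widetilde{\mathcal{C_{\star}}}}_{<\omega}$.

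For $(2)$, heredity is $(1)$, so it remains to produce, for any two members $H=\sum_{\star i<p}G_i$ and $H'=\sum_{\star j<q}G'_j$, a third member into which both embed \emph{disjointly}; this also yields up-directedness, so that ${\widetilde{\mathcal{C_{\star}}}}_{<\omega}$ is an inexhaustible age. Put $r:=\max(p,q)$. For each $k<r$ I would use the inexhaustibility of $\mathcal C$ to choose $K_k\in\mathcal C$ into which $G_k$ and $G'_k$ embed disjointly (when only one of them is defined, take $K_k$ to be that one). Then $H'':=\sum_{\star k<r}K_k\in{\widetilde{\mathcal{C_{\star}}}}_{<\omega}$, and applying Lemma \ref{sumincreasing} with the identity inclusion of the indexing chains and the chosen componentwise embeddings gives embeddings of $H$ and of $H'$ into $H''$. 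By construction their images meet each $K_k$ in disjoint sets, hence are globally disjoint.

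For $(3)$ I would encode finite lexicographical sums as words. Regarding $\mathcal C$, quasi-ordered by embeddability, as an alphabet, define $\Phi\colon\mathcal C^{*}\to{\widetilde{\mathcal{C_{\star}}}}_{<\omega}$ by sending a word $G_0\cdots G_{p-1}$ to $\sum_{\star i<p}G_i$. This map is onto, since a finite sum over any finite chain is isomorphic to one over an initial segment of $\omega$. It is order preserving from the subword ordering to embeddability: if $G_0\cdots G_{p-1}\preceq G'_0\cdots G'_{q-1}$ via a $1$--$1$ order preserving $\varphi$ with $G_i\le G'_{\varphi(i)}$, then Lemma \ref{sumincreasing} (with chain embedding $\varphi$) gives $\Phi(G_0\cdots G_{p-1})\le\Phi(G'_0\cdots G'_{q-1})$. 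Since $\mathcal C$ is wqo, Higman's Theorem \ref{thm:.higman} makes $\mathcal C^{*}$ wqo; and an order preserving surjective image of a wqo is wqo (lift any infinite sequence of the target to preimages, extract a good pair there, and push it forward). Hence ${\widetilde{\mathcal{C_{\star}}}}_{<\omega}$ is wqo.

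Part $(4)$ is the one I expect to be the real obstacle. Here each $G_i$ is a disjoint union of labelled paths of length at most $b$, and I would analyse an induced path $\pi=v_0\cdots v_m$ of $\sum_{\star i<p}G_i$ through its \emph{runs}, a run being a maximal subinterval of $\pi$ lying in a single component. Every run lies inside one path-summand of its component, hence has at most $b+1$ vertices, so $m+1$ is at most $(b+1)$ times the number of runs, and it suffices to bound the number of runs, equivalently the number of inter-component \emph{crossings} (edges joining the end of one run to the start of the next). Each crossing has a type, namely its direction (towards a higher or a lower component) together with the pair of labels of its two endpoints; with only finitely many labels occurring there are only finitely many types. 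The key step I would carry out is that a type cannot repeat too often: if, say, more than a few crossings share an ``upward'' type $(a,b)$ with $a\star b=1$, then the endpoint $\xi$ of smallest component among them (label $a$) sits below the upper endpoints of all the others, all of which carry label $b$; since $a\star b=1$, $\xi$ is adjacent in $H$ to three distinct such vertices of $\pi$, forcing $\xi$ to have degree at least three in the induced path --- impossible, as $\pi$ has maximum degree two. Thus each type occurs boundedly often and the number of crossings, hence the length of labelled paths in ${\widetilde{\mathcal{C_{\star}}}}_{<\omega}$, is bounded by a function of $b$ and of the number of labels involved (finite in the situations we care about, where $L=\{0,1\}$). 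Making this crossing-control argument precise is the crux; it is the finite analogue of the confinement phenomenon isolated later in Lemma \ref{lem:longenough}, and it is exactly the finiteness of the pool of labels that powers the pigeonhole.
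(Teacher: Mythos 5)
Your treatment of items $(1)$--$(3)$ is correct and coincides with the paper's: the paper dismisses $(1)$ and $(2)$ as routine, and its proof of $(3)$ is exactly yours, namely viewing members of $\mathcal{C}$ as letters, applying Higman's Theorem \ref{thm:.higman}, and pushing the wqo forward along the order-preserving surjection (via Lemma \ref{sumincreasing}) from words onto finite sums. For item $(4)$, however, you take a genuinely different route. The paper's proof stays inside the wqo framework: it introduces the notion of a $1$-well class ($\mathcal{C}$ is $1$-well when $\mathcal C_{[\bf 1]}$, obtained by adjoining an arbitrary unary predicate, is wqo), observes that a class containing paths of unbounded length is never $1$-well (mark the two end-vertices; marked paths of distinct lengths are pairwise incomparable), and then applies Higman twice to show that $({\widetilde{\mathcal{C_{\star}}}}_{<\omega})_{[\bf 1]}$ is wqo, which forces boundedness. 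Your proof is instead a direct combinatorial confinement argument: runs inside components have at most $b+1$ vertices, and crossings of a fixed (lower label, upper label) type are boundedly many, since otherwise one finds three distinct upper endpoints all adjacent to the lower endpoint $\xi$ lying in the least component, giving $\xi$ degree at least three in an induced path. Your count does close up (every crossing edge automatically satisfies $\ell(\mathrm{lower})\star\ell(\mathrm{upper})=1$; each vertex is an endpoint of at most two crossings, so five crossings of one type already yield three distinct neighbours of $\xi$), and it gives an explicit bound of order $(b+1)\,(4|L|^2+1)$; as you note, it is a finite precursor of the confinement phenomenon of Lemma \ref{lem:longenough} and of Claim 2 in Proposition \ref{prop:congruences}. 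What each approach buys: yours is elementary, self-contained and quantitative; the paper's is shorter granted Higman's theorem and involves no case analysis. The caveat you flag deserves emphasis but is not a gap of yours alone: both proofs require the set of labels occurring to be finite --- you for the pigeonhole on types, the paper for the alphabet of marked labelled paths of bounded length to be wqo in its first application of Higman. Indeed item $(4)$ is simply false for infinite $L$: take $L=\NN$, $i\star j=1$ if and only if $j=i+1$, and $\mathcal C$ the class of single labelled vertices (paths of length $0$); then the finite lexicographical sums include arbitrarily long paths. So finiteness of $L$ is an implicit hypothesis of the statement, shared by the paper's own argument, and under that reading your proposal is a correct alternative proof.
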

\begin{proof}Only the last two  statements are  nontrivial. They are consequence  of Higman's Theorem \cite{higman}.\\
$(3)$ Let $P$ be the  class $\mathcal{C}$ quasi ordered   by embeddability. To a finite sequence $(G_0,...,G_{n-1})\in P^*$ associate the sum $\sum_{i\in \{1,...,n-1\}}G_i$ and observe that the operation $\sum$ is order preserving from $P^*$ into ${\widetilde{\mathcal{C_{\star}}}}_{<\omega}$: if $(G_0,...,G_{n-1})\preceq (G'_0,...,G'_{n'-1})$ then $\sum_{i\in \{1,...,n-1\}}G_i \leq \sum_{i\in \{1,...,n'-1\}}G'_i$.
Since $P$ is wqo, $P^*$ is wqo by Higman's Theorem. Its  image is ${\widetilde{\mathcal{C_{\star}}}}_{<\omega}$ which is  wqo too.\\
$(4)$ The proof relies on a strengthening of the notion of wqo due to the first author (\cite{pouzet72}, \cite{fraissetr} 13.2.1 page 355).
A collection $\mathcal C$ of labelled  graphs is \emph{$1$-well} if the collection $\mathcal C_{[\bf 1]}$ made of pairs $(G,U)$ where $G\in \mathcal C$ and $U$ is a unary relation on $V(G)$ is wqo. A collection of paths (labelled or not) of unbounded length is not $1$-well. Indeed, to each path $G$ associate $(G, U)$ where $U$ take value $1$ on the end-vertices of $G$ and $0$ on the other vertices. Then paths with different length are incomparable. Now, suppose that  $\mathcal C$ consists of direct sums of labelled paths of bounded length then, with  Higman's Theorem,  $\mathcal C_{[\bf 1]}$ is wqo, and again by Higman's  Theorem  $({\widetilde{\mathcal{C_{\star}}}}_{<\omega})_ {[\bf 1]}$  is wqo. Hence the length of paths in ${\widetilde{\mathcal{C_{\star}}}}_{<\omega}$ is bounded.
\end{proof}

\begin{lemma}\label{lem:sameage} Let $G$ be a labelled graph and let $G\cdot_{\star} C$ be the  lexicographic sum of copies of $G$ indexed by the chain $C:= (I, \leq)$. Let $J$ be an infinite subset of $I$, $(A_j)_{j\in J}$ be a family of subsets of $V(G)$, $\mathcal S:= (G_j)_{j\in J}$ where $G_j:= G_{\restriction A_j}$ and   $G_{(\star, \mathcal S)}$ be the graph  induced by  $G\cdot_{\star} C$ on  $\bigcup_{j\in J} \{j\}\times A_j$. Then

\begin{enumerate}[$(1)$]
\item  $\age (G_{(\star, \mathcal S)})= \age (G\cdot_{\star} C)$ if $\age (G)= \bigcup_{i\in J} \age (G_i)$.

\item Furthermore, if $C_{\restriction J}$ has order type $\omega$ or $\omega^d$ and  $(A_j)_{j\in J}$,  $(A'_j)_{j\in J}$ are two families  of finite subsets of $V(G)$ such that $\age (G)= \bigcup_{i\in J} \age (G_i)= \bigcup_{i\in J} \age (G'_i)$ then $G_{(\star, \mathcal S)}$ and  $G_{(\star, \mathcal S')}$ are equimorphic.

\end{enumerate}
\end{lemma}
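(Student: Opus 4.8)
The plan is to reduce both parts, through Lemma~\ref{sumincreasing}, to an ``index-matching'' problem. First note that $G_{(\star,\mathcal S)}$ is exactly the lexicographic sum $\sum_{\star}(G_j)_{j\in J}$ of its fibres over the chain $C_{\restriction J}$, so to embed one such sum into another it is enough to exhibit an order embedding of the index chains together with, on each fibre, a label-preserving embedding into the fibre above it. The single fact that makes this work is the following cofinal-appearance property: $(\ast)$ \emph{for every $S\in\age(G)$ one has $S\in\age(G_j)$ for infinitely many $j\in J$}. I would record at the outset that $(\ast)$ holds as soon as $\age(G)$ has members of unbounded cardinality (automatic over a finite label set) and the fibres $G_j$ are finite: since $\age(G)$ is up-directed and infinite it contains members $T$ with $S\leq T$ and $|T|$ arbitrarily large; each such $T$ lies in some $\age(G_j)$, forcing $|G_j|\geq|T|$, so these indices are infinitely many, and $S\leq T\leq G_j$ gives $S\in\age(G_j)$. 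When the fibres may be infinite, $(\ast)$ still holds whenever the union $\age(G)=\bigcup_j\age(G_j)$ is cofinal, in particular when the ages $\age(G_j)$ increase to $\age(G)$, which is the case in the applications (the fibres being longer and longer initial segments of a fixed labelled path).

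For part~$(1)$ the inclusion $\age(G_{(\star,\mathcal S)})\subseteq\age(G\cdot_\star C)$ is immediate, $G_{(\star,\mathcal S)}$ being an induced subgraph of $G\cdot_\star C$. For the reverse inclusion I would take a finite $H\in\age(G\cdot_\star C)$; it meets only finitely many components, indexed say by $i_1<\dots<i_k$, and its trace $H_m$ on the $m$-th of them is an induced subgraph of a copy of $G$, hence $H_m\in\age(G)$. By up-directedness of $\age(G)$ I pick $S^\ast\in\age(G)$ with label-preserving embeddings $H_m\leq S^\ast$ for all $m$. Property $(\ast)$ then provides indices $j_1<\dots<j_k$ in $J$ with $S^\ast\leq G_{j_m}$, whence $H_m\leq G_{j_m}$; applying Lemma~\ref{sumincreasing} to the order embedding $i_m\mapsto j_m$ and these fibre embeddings yields $H\leq G_{(\star,\mathcal S)}$.

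For part~$(2)$ I would first reduce to order type $\omega$: if $C_{\restriction J}$ has type $\omega^d$, Lemma~\ref{duality-equality} replaces $\star$ by $\star^d$ and $C$ by $C^d$ without altering the underlying graphs, turning $C_{\restriction J}$ into a chain of type $\omega$, and the argument below applies verbatim to $\star^d$. Enumerate $J=\{j_0<j_1<\cdots\}$. To obtain $G_{(\star,\mathcal S)}\leq G_{(\star,\mathcal S')}$ it suffices, by Lemma~\ref{sumincreasing}, to build a strictly increasing $\varphi\colon\NN\to\NN$ and label-preserving embeddings $f_n\colon G_{j_n}\hookrightarrow G'_{j_{\varphi(n)}}$. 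I construct $\varphi$ greedily: the fibre $G_{j_n}$ is finite and lies in $\age(G)=\bigcup_m\age(G'_{j_m})$, so by $(\ast)$ (applied to the finite fibres $G'_{j_m}$) it embeds into $G'_{j_m}$ for infinitely many $m$; I choose one such $m$ exceeding $\varphi(n-1)$, set $\varphi(n):=m$ and take $f_n$ to be a witness. The symmetric construction yields $G_{(\star,\mathcal S')}\leq G_{(\star,\mathcal S)}$, and the two embeddings show the graphs are equimorphic.

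The one genuine obstacle is precisely $(\ast)$, and it is where the finiteness assumptions enter. A naive use of $\age(G)=\bigcup_j\age(G_j)$ only locates \emph{one} component receiving a given fibre, which is useless when several fibres must be embedded simultaneously in increasing order; it is up-directedness together with infiniteness of the age that upgrades ``some component'' to ``arbitrarily late components''. This is automatic in part~$(2)$, where the fibres are finite, and I would flag that in part~$(1)$, with possibly infinite fibres, the statement should be read with the cofinal version of the hypothesis (the ages $\age(G_j)$ increasing to $\age(G)$), which is exactly the situation arising in the construction of the graphs $\widehat Q_{(u,\star)}$.
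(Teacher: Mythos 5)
Your proposal is correct and follows the same route as the paper's own proof: decompose a finite induced subgraph of $G\cdot_{\star} C$ into its traces on the components, embed those traces into fibres $G_j$ indexed in increasing order, and conclude with Lemma~\ref{sumincreasing}; for part $(2)$, the paper uses the very same greedy construction of a strictly increasing map $\varphi$. Where you genuinely go beyond the paper is the cofinal-appearance property $(\ast)$ together with the up-directedness trick of dominating all traces by a single $S^{*}$: the paper simply \emph{asserts} that the indices $n_{k_i}$ can be chosen strictly increasing, and that assertion does not follow from the stated hypothesis. In fact the lemma as literally stated fails: take $G$ to be a single vertex labelled $1$, $\star$ the constant operation $1$, $C=\omega$, $J=\NN$, $A_0=V(G)$ and $A_j=\varnothing$ for $j\geq 1$; then $\age(G)=\bigcup_{j}\age(G_j)$, yet $G_{(\star,\mathcal S)}$ is a single vertex while $G\cdot_{\star}\omega$ is an infinite clique. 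So your $(\ast)$ (equivalently, the cofinal reading of the hypothesis) is exactly the supplement the argument needs, and it does hold in every application the paper makes of the lemma, where $G$ is an infinite labelled path $P_u$ and the fibres are labelled paths of unbounded length.

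One slip you should fix: members of $\age(G)$ of unbounded cardinality are \emph{not} automatic over a finite label set --- this requires $G$ itself to be infinite, since the age of a finite labelled graph is finite. For the same reason, part $(2)$ is not automatic from finiteness of the fibres alone: in the example above, taking $A'_j=V(G)$ for all $j$ gives two families of finite fibres satisfying the age hypothesis whose restrictions (a single vertex and an infinite clique) are not equimorphic. So the infiniteness of $G$ (or of $\age(G)$) should be flagged in part $(2)$ exactly as you flag the cofinality hypothesis in part $(1)$; this is harmless in the paper's context, but worth stating since you are otherwise careful about precisely this kind of degeneracy.
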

\begin{proof} $(1)$ Let $H\in \age (G\cdot_{\star} C)$. Let $F \subseteq I\times V(G)$ such that $(G\cdot_{\star} C)_{\restriction F}$ is isomorphic to $H$. Let $K$ be the projection of $F$ on $I$. For $k\in K$, set $F_k:= \{x\in V(G): (k, x)\in F\}$. Since $\age (G)= \bigcup_{i\in J} \age (G_i)$,  $G_{\restriction F_k}$ embeds in some $G_j$. Write $k_0, \dots,  k_{m-1}$ the elements of $K$ in an increasing order, so $k_i< k_{i+1}$ for $i\in \{0,...,m-2\}$.  For each $i$, $0\leq i<m$,  we may select  $n_{k_i}$ such that $G_{\restriction F_{k_i}}$ embeds in $G_{n_{k_i}}$, this $G_{n_{k_i}}$ being isomorphic to $G_{\restriction \{n_{k_i}\}\times A_{n_{k_i}}}$ and the indices $n_{k_i}$ strictly increasing with $i$. Then, with Lemma \ref{sumincreasing},  one can see that $H$ embeds into $(G\cdot_{\star} C)_{\restriction \bigcup_{0\leq i <m} \{n_{k_i}\}\times A_{n_{k_i}}}$, hence in $G_{(\star,  \mathcal S)}$, thus $H\in \age(G_{(\star, \mathcal S)})$. This proves our assertion.

$(2)$We prove that $G_{(\star, \mathcal S)}$ embeds in $G_{(\star, \mathcal S')}$.  We may suppose that $C_{\restriction J}$ has order type $\omega$ and identify it with $\NN$.  We built  a strictly increasing self map $\varphi$ on $\NN$  such that $(G\cdot_{\star} C_{\restriction J})_{\restriction \{n\}\times A_n}$ embeds in $(G\cdot_{\star} \omega)_{\restriction \{\varphi(n)\}\times A'_{\varphi(n)}}$. The conclusion follows with Lemma \ref{sumincreasing}.
\end{proof}
\begin{corollary}\label{cor:zeta-omega} $G\cdot_{\star} \zeta$, $G\cdot_{\star} \omega^d$ and $G\cdot_{\star} \omega$ have the same age.
\end{corollary}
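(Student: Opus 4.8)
The plan is to identify all three ages with a single, chain-independent description: the collection of isomorphism types of finite lexicographical sums $\sum_{\star k\in D}(G_{\restriction B_k})$, where $D$ ranges over finite chains and each $B_k$ over finite subsets of $V(G)$. Since this description makes no reference to which of $\omega$, $\omega^d$, $\zeta$ we started from, proving that each of $\age(G\cdot_{\star}\omega)$, $\age(G\cdot_{\star}\omega^d)$, $\age(G\cdot_{\star}\zeta)$ equals this common collection will give the claim at once.

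First I would prove the inclusion of each age into this collection, which is a matter of restriction. A finite induced subgraph of $G\cdot_{\star}C$ is $(G\cdot_{\star}C)_{\restriction F}$ for some finite $F\subseteq I\times V(G)$. Letting $K$ be the (finite) projection of $F$ onto $I$ and $F_k:=\{x\in V(G):(k,x)\in F\}$, one reads off directly from the definition of the edge set that this induced graph is exactly the finite lexicographical sum $\sum_{\star k\in K}(G_{\restriction F_k})$ taken over the finite subchain $K$ of $C$. Hence every member of $\age(G\cdot_{\star}C)$ belongs to the collection above, for $C\in\{\omega,\omega^d,\zeta\}$.

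For the reverse inclusion I would show that each finite lexicographical sum $\sum_{\star k\in D}(G_{\restriction B_k})$ embeds into $G\cdot_{\star}C$ for every $C\in\{\omega,\omega^d,\zeta\}$. The key point is that a finite chain $D$ embeds, as a chain, into each of $\omega$, $\omega^d$ and $\zeta$; choose such an order embedding $\varphi\colon D\to C$. Applying Lemma~\ref{sumincreasing} with this $\varphi$ and the identity embeddings $G_{\restriction B_k}\hookrightarrow G_{\restriction B_k}$ on the components (these preserve labels, since within a single component the labelling of $G\cdot_{\star}C$ is a copy of that of $G$), we get an embedding of $\sum_{\star k\in D}(G_{\restriction B_k})$ into $G\cdot_{\star}C$. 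Thus the whole collection lies in $\age(G\cdot_{\star}C)$ for each of the three chains, and combined with the previous paragraph all three ages coincide.

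I do not expect a genuine obstacle here; the only subtlety to be careful about is that $\star$ need not be symmetric, so adjacency between two components depends on their relative order. This is precisely what Lemma~\ref{sumincreasing} accommodates, since it requires $\varphi$ to be order preserving: a crossing edge $\ell(x)\star\ell(y)$, with the component of $x$ preceding that of $y$, is computed with the two components in the same relative order before and after applying $\varphi$. The one fact to verify is therefore that finite chains embed order-preservingly into $\omega$, $\omega^d$ and $\zeta$, which is immediate (and indeed $\omega\hookrightarrow\zeta$ and $\omega^d\hookrightarrow\zeta$ give the inclusions $\age(G\cdot_{\star}\omega),\age(G\cdot_{\star}\omega^d)\subseteq\age(G\cdot_{\star}\zeta)$ at a glance).
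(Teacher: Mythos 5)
Your proof is correct, but it takes a different route from the paper's. The paper gives no separate argument for Corollary \ref{cor:zeta-omega}: it is an immediate instance of Lemma \ref{lem:sameage}(1), taking $C=\zeta$, letting $J$ be the set of non-negative (respectively, negative) integers and $A_j=V(G)$ for every $j\in J$, so that $G_{(\star,\mathcal{S})}$ is exactly $G\cdot_{\star}\omega$ (respectively, $G\cdot_{\star}\omega^d$) and the hypothesis $\age(G)=\bigcup_{j}\age(G_j)$ holds trivially; hence both of these ages equal $\age(G\cdot_{\star}\zeta)$. You instead prove a chain-independent normal form for the age: it is the collection of finite lexicographical sums $\sum_{\star k\in D}(G_{\restriction B_k})$ over finite chains $D$. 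Your two inclusions are, in substance, the two halves of the paper's proof of Lemma \ref{lem:sameage} (decomposition of a finite induced subgraph into fibers over the projection to the index chain, then re-embedding via Lemma \ref{sumincreasing}), so the technical content coincides; what your packaging buys is self-containedness and a little extra generality, since your argument shows that $\age(G\cdot_{\star}C)$ is the same for \emph{every} infinite chain $C$ (every finite chain embeds in every infinite chain), not just for $\omega$, $\omega^d$ and $\zeta$. One cosmetic slip: in the reverse inclusion, the component maps fed into Lemma \ref{sumincreasing} should be the label-preserving inclusions of $G_{\restriction B_k}$ into the full copy of $(G,\ell)$ sitting at position $\varphi(k)$, not embeddings $G_{\restriction B_k}\hookrightarrow G_{\restriction B_k}$; your parenthetical remark makes clear this is what you intended.
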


\begin{definition}
To a uniformly recurrent word $u$ on $\NN$ over the alphabet $L$ we associate four graphs $G_{(u, \star)}$, $Q_{(u, \star)}$, $\widehat G_{(u, \star)}$ and $\widehat Q_{(u, \star)}$. The graph  $G_{(u, \star)}:= P_{u}\cdot_{\star} \omega$ is the lexicographical sum of $\omega$ copies of the labelled graph $P_u$. To emphasize, the vertex set $V(G_{(u, \star)})$ is $\NN\times \NN$; on each component $\{i\} \times \NN$, the graph induced by $G_{(u, \star)}$ is a copy of the path $P_u$ labelled by $u$; the label of vertex $(i, x)$ is $u(x)$,   two vertices  $(i, x)$ and $(j, y)$ of  $G_{(u, \star)}$ with $i<j$ are linked by an edge if and only if $u(x)\star u(y)=1$. Let $(I_n)_{n\in \NN}$ be a family of intervals of $\NN$ of unbounded finite length then $Q_{(u, \star)}$ is  the induced subgraph of $G_{(u, \star)}$ on  $I:=\bigcup_{n\in \NN}\{n\}\times I_n$.
Let $\widehat G_{(u, \star)}$ and $\widehat Q_{(u, \star)}$ be the graphs obtained by removing the labels of $G_{(u, \star)}$ and $Q_{(u, \star)}$ respectively. Hence, $\widehat Q_{(u, \star)}$ is the restriction to $I$ of $\widehat G_{(u, \star)}$.
\end{definition}

\begin{proposition}\label{prop:pathminimal}The graphs $\widehat G_{(u, \star)}$ and $\widehat Q_{(u, \star)}$ have the same age and this age is wqo. Furthermore $\widehat Q_{(u, \star)}$ is path-minimal.
\end{proposition}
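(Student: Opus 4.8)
The plan is to settle the three assertions in turn, each time reducing the unlabelled statement to its labelled counterpart through the label-forgetting map, which sends label-preserving embeddings to embeddings and maps $\age(G_{(u,\star)})$ onto $\age(\widehat G_{(u,\star)})$: equality of labelled ages then gives equality of unlabelled ages, and---a surjective order-preserving image of a wqo being wqo---wqo of a labelled age gives wqo of the unlabelled one. For the equality of ages I would invoke Lemma~\ref{lem:sameage}(1) with $G:=P_u$, $C:=\omega$, $J:=\NN$ and $A_j:=I_j$, so that the resulting $G_{(\star,\mathcal S)}$ is precisely $Q_{(u,\star)}$; the hypothesis to verify is $\age(P_u)=\bigcup_{j}\age\big((P_u)_{\restriction I_j}\big)$. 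The inclusion $\supseteq$ is clear, while for $\subseteq$ a member of $\age(P_u)$ is a finite direct sum of labelled paths with labels in $\fac(u)$, and, $u$ being recurrent, I can lay these factors one after another with a gap between consecutive ones inside a single factor $W$ of $u$; uniform recurrence then forces $W$ to occur in every long enough factor of $u$, hence in $u_{\restriction I_j}$ as soon as $|I_j|$ is large (the $|I_j|$ being unbounded). Forgetting labels yields $\age(\widehat Q_{(u,\star)})=\age(\widehat G_{(u,\star)})$.

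For wqo, set $\mathcal C:=\age(P_u)$. As $u$ is uniformly recurrent, Theorem~\ref{unif-recurrent-charact} makes $\mathcal C$ an inexhaustible, wqo, hereditary class of $L$-labelled finite graphs, whence Theorem~\ref{thm:wqo labelled}(3) makes ${\widetilde{\mathcal{C_{\star}}}}_{<\omega}$ wqo. Any finite induced subgraph of $G_{(u,\star)}=P_u\cdot_{\star}\omega$ meets only finitely many components, induces a member of $\mathcal C$ on each of them, and is the lexicographical sum of these over the finite chain of indices it uses; hence $\age(G_{(u,\star)})\subseteq{\widetilde{\mathcal{C_{\star}}}}_{<\omega}$ is wqo, and forgetting labels completes this point.

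It remains to show $\widehat Q_{(u,\star)}$ is path-minimal. It carries induced paths of unbounded length because its component $\{n\}\times I_n$ induces $P_{|I_n|}$ and the $|I_n|$ are unbounded. For the minimality part, let $\widehat Q'$ be an induced subgraph of $\widehat Q_{(u,\star)}$ that still carries induced paths of unbounded length; I must embed $\widehat Q_{(u,\star)}$ into it. The decisive structural feature is that adjacency of two vertices lying in distinct components depends only on their two labels and on the order of the components. Consequently, if inside $\widehat Q'$ I can locate infinitely many components $j_0<j_1<\cdots$ and, in each, an interval of the path $P_u$ (a \emph{vertical segment}), with the lengths of these segments unbounded, then the graph they induce is exactly a lexicographical sum of the corresponding factors of $u$---a restriction of the type appearing in Lemma~\ref{lem:sameage}(2) whose component-ages exhaust $\age(P_u)$; by that lemma together with Lemma~\ref{sumincreasing} it is equimorphic to $\widehat Q_{(u,\star)}$, giving $\widehat Q_{(u,\star)}\le\widehat Q'$.

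The main obstacle is to produce these long vertical segments from the sole assumption that $\widehat Q'$ has arbitrarily long induced paths---equivalently, to show that if every vertical segment of $\widehat Q'$ had length under some bound $B$ then its induced paths would be bounded, so that a long induced path cannot be too ``diagonal''. I would cut a long induced path into its maximal vertical runs (each of length $<B$) and study the cross-edges joining consecutive runs: since cross-adjacency is governed by the two labels alone, the demand that non-consecutive vertices be non-adjacent forces rigid, repetitive constraints on the labels carried by the runs, and a pigeonhole argument over the finite label set, fed by the uniform recurrence of $u$, should bound the number of runs and hence the path length. This confinement of long paths to essentially single components is the technical heart; it is the weaker cousin of the later Lemma~\ref{lem:longenough} (which gives full confinement to one component for the distinguished operations $\star$), and it is where the real work lies.
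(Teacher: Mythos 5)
Your first two assertions are handled correctly and along the paper's own route: equality of the ages of $\widehat G_{(u,\star)}$ and $\widehat Q_{(u,\star)}$ via Lemma \ref{lem:sameage}(1) (your verification of the hypothesis $\age(P_u)=\bigcup_j\age\bigl((P_u)_{\restriction I_j}\bigr)$ by uniform recurrence is a detail the paper leaves implicit), and wqo via Theorem \ref{unif-recurrent-charact}, Theorem \ref{thm:wqo labelled}(3) and the inclusion $\age(G_{(u,\star)})\subseteq{\widetilde{\mathcal{C_{\star}}}}_{<\omega}$ for $\mathcal C:=\age(P_u)$, followed by forgetting labels.

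The genuine gap is in the third part, exactly where you locate ``the real work''. From an induced subgraph with induced paths of unbounded length you must extract vertical segments of unbounded length; equivalently, you must show that if every path lying inside a single component of $\widehat G_{\restriction A}$ has length less than some bound $k$, then all induced paths of $\widehat G_{\restriction A}$ have bounded length. You only sketch a pigeonhole argument (it ``should bound the number of runs'') and do not carry it out; as written this is not a proof, and it is doubtful it can be completed as described for an arbitrary operation $\star$: statements of this confinement type are precisely what costs the paper a long case analysis in Lemma \ref{lem:longenough}, and even there only for the Boolean sum and the projections, the general case being explicitly left open. Moreover the paper does not use Lemma \ref{lem:longenough} here at all; it closes this step with a tool set up in advance exactly for this purpose, which you overlooked: item (4) of Theorem \ref{thm:wqo labelled}. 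If all vertical runs have length at most $k$, then each $G_{\restriction A_n}$, where $A_n:=A\cap(\{n\}\times\NN)$, is a direct sum of labelled paths of length at most $k$, so every finite induced subgraph of $G_{\restriction A}$ lies in ${\widetilde{\mathcal{C_{\star}}}}_{<\omega}$ with $\mathcal C$ the class of such direct sums; item (4) --- proved via Higman's theorem and the notion of $1$-well quasi-order, paths of unbounded length failing to be $1$-well because marking end vertices produces an infinite antichain --- asserts that paths in ${\widetilde{\mathcal{C_{\star}}}}_{<\omega}$ have bounded length, a contradiction, and this works uniformly in $\star$. Once the unbounded vertical segments are produced, your conclusion via cofinality (Theorem \ref{unif-recurrent-charact}), Lemma \ref{lem:sameage}(2) and Lemma \ref{sumincreasing} coincides with the paper's.
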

\begin{proof}
First, we prove that   $\widehat G_{(u, \star)}$ and $\widehat Q_{(u, \star)}$ have the same age.
This follows from the fact that $G_{(u, \star)}$ and $Q_{(u, \star)}$ have the same age. This fact is an immediate consequence of Lemma \ref{lem:sameage}.

Next we prove that $\age(\widehat G_{(u, \star)})$,  the age of $\widehat G_{(u, \star)}$,  is wqo. Again, this fact follows from the fact that  $\age(G_{(u, \star)})$ is wqo. This last fact follows from Item $(3)$ of Theorem \ref{thm:wqo labelled}. Indeed, according to Theorem \ref{unif-recurrent-charact}, $\age(P_u)$ is wqo. If $\mathcal C:= \age (P_u)$ then ${\widetilde{\mathcal{C_{\star}}}}_{<\omega}$ is wqo by Item $(3)$ of Theorem \ref{thm:wqo labelled}. Since $\age(G_{(u, \star)})\subseteq {\widetilde{\mathcal{C_{\star}}}}_{<\omega}$,  $\age(G_{(u, \star)})$ is wqo.

We prove that any other family of intervals of finite  unbounded length yields an equimorphic graph to $\widehat Q_{(u, \star)}$ and furthermore  that  this graph as well as $\widehat Q_{(u, \star)}$ is path-minimal. For that, we apply the  following claim.


\noindent \textbf{Claim:} Let $G:= G_{(u, \star)}$, $Q:=Q_{(u, \star)}$ and $\widehat G$, $\widehat Q$ be the graphs obtained by deleting the labels.
\begin{enumerate}[$(1)$]
  \item $\widehat Q$ contains  finite induced paths of unbounded length.
  \item Let  $A\subseteq V(G)$. If  $\widehat G_{\restriction A}$ contains finite paths of unbounded length, then $\widehat Q$ embeds into  $\widehat G_{\restriction A}$.
\end{enumerate}

\noindent \textbf{Proof of Claim:} For $n\in \NN$ let $A_n:=A\cap (\{n\}\times P_u)$. We claim that for each integer $m$ there exists some $n_m$ such that $\widehat G_{\restriction A_{n_m}}$ contains a path of length at least $m$.  Suppose that  this is not the case. This means that there is   some integer $k$  such that the length of paths in each $A_n$ is bounded by $k$. In this case, $\widehat G_{\restriction A_{n}}$ decomposes into a direct sum of labelled paths of length at most $k$. According to $(4)$ of Theorem \ref{thm:wqo labelled}, the length of paths in $G_{\restriction A}$ is bounded, contradicting our hypothesis. This proves our claim. For each $m$,  let $I_{n_m}$ be an interval of $\NN$ of length $m$ such that $\{n_m\}\times I_{n_m}$ is included into some $A_{n_m}$, the   $A_{n_ m}$ being distinct and in an increasing order. The set $S:= \{P_u\restriction I_{m_n}: m\in \NN\}$ is an infinite subset of $\age(P_u)$. Since $u$ is uniformly recurrent, it follows from Theorem \ref{unif-recurrent-charact} that $S$ is cofinal in $\age (P_u)$, that is every element of $\age (P_u)$ embeds in some element of $S$.  Thus Lemma \ref{lem:sameage} applies,   $Q$ embeds into $G_{\restriction A}$, hence $\widehat Q$ embeds into $\widehat G_{\restriction A}$. This completes the proof of the claim.

The proof of the Proposition \ref{prop:pathminimal} is now complete.
\end{proof}

\subsection{Congruences and path-minimal graphs}\label{congruences-path-minimal}
In this subsection, we start with a simple  construction. It gives  an   illustration of our techniques and allows us to show that there are countably many  path-minimal graphs with incomparable ages (Proposition \ref{prop:congruences}).

%

 Let $k\geq 2$ be an integer and $\equiv_k$ be the congruence modulo $k$ on the integers. Let $\widehat G_{k}$  be the graph with vertex set $\NN\times \NN$ such that two  vertices $(x,y)$ and $(x',y')$ form  an edge if either $x=x'$ and $|y-y'|=1$ or $x\neq x'$ and $y \not \equiv_k y'$.   Let $\widehat Q_k$ be the restriction of $\widehat {G_k}$ to $\{(n,m) : m< n+5\}$.

 The graph $\widehat G_k$ can be obtained by the process described in Subsection \ref{subsection:lexicographicsum}. Indeed, let $u:=(u(n))_{n\in \NN}$ where $u(n)$ is the residue of $n$ modulo $k$. Hence $u$ is a periodic word on  the alphabet $\{0,...,k-1\}$. Define $\star$ from $\{0, \dots, k-1\}\times \{0, \dots, k-1\}$ into $\{0,1\}$ by setting  $i\star i=0$ for $i=0, \dots, k-1$ and $i\star j=1$ otherwise. To the word $u$ we associate the labelled path $P_u$ on $\NN$ where vertex $n$ is labelled by $u(n)$.  Let $G$ be the  lexicographical sum  of copies of $P_u$ indexed by the chain $\omega$, say $G:=P_u \cdot_{\star}\omega$, and $\widehat G$ be the graph obtained by removing the labels of $G$.   Then $\widehat G_k = \widehat G$. Note that  $\widehat G_2=\widehat G_{(u, \star)}$ where $u$ is the periodic sequence $010101 \dots$ and $\star$ is the Boolean sum on $\{0,1\}$.

\begin{proposition}\label{prop:congruences}
For each integer $k\geq 2 $, $\widehat Q_k$ is a path-minimal graph and for different values of $k$ the ages of $\widehat Q_k$ are incomparable w.r.t. inclusion.
\end{proposition}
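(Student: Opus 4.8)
The plan is to treat the two assertions separately: path-minimality is essentially free, whereas the incomparability of the ages is where the work lies, and it splits into two opposite non-inclusions proved by two different invariants.

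\textbf{Path-minimality.} First I would note that the word $u=(u(n))_{n\in\NN}$ with $u(n)$ the residue of $n$ modulo $k$ is periodic, hence uniformly recurrent (for each $n$, every factor of length at most $n$ already occurs inside every window of length $k+n$). Since $\widehat Q_k$ is exactly the graph $\widehat Q_{(u,\star)}$ produced from this $u$ and the ``inequality'' operation $i\star j=1\iff i\neq j$, with the interval family $I_n=\{0,\dots,n+4\}$ of unbounded finite length, Proposition \ref{prop:pathminimal} applies directly: $\widehat Q_k$ is path-minimal and $\age(\widehat Q_k)=\age(\widehat G_k)$. From here on I would argue with $\age(\widehat G_k)$.

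\textbf{First non-inclusion, via the clique number.} I would show that the clique number of $\widehat G_k$ equals $k$. In a clique, two vertices of the same component must be consecutive on the path (so at most two per component, and those two carry distinct residues), while two vertices of distinct components must carry distinct residues; combining these, no residue can be used twice, so a clique has at most $k$ vertices, and $k$ vertices chosen in $k$ distinct components with residues $0,\dots,k-1$ realize $K_k$. Hence $K_k\in\age(\widehat G_k)\setminus\age(\widehat G_j)$ whenever $j<k$, so $\age(\widehat G_k)\not\subseteq\age(\widehat G_j)$. In particular the only inclusion that could still hold between two of these ages is $\age(\widehat G_j)\subseteq\age(\widehat G_k)$ with $j<k$, and this is precisely what the second invariant must destroy.

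\textbf{Second non-inclusion, via a period detector.} Fix $j<k$. For $n$ large I would build the finite graph $D$ consisting of an induced path $a_0-a_1-\cdots-a_n$ together with one extra vertex $b$ joined to $a_i$ exactly when $i\not\equiv_j 0$. Placing the path inside a single component of $\widehat G_j$ (residues $0,1,\dots,j-1,0,\dots$) and $b$ in another component with residue $0$ shows $D\in\age(\widehat G_j)$. To prove $D\notin\age(\widehat G_k)$, assume it embeds. As $b$ is adjacent to more than two pairwise non-consecutive vertices $a_i$, and a vertex lying in the path's component would have at most two neighbours among the $a_i$, the vertex $b$ must be external to the component carrying the path; its non-neighbours among $a_0,\dots,a_n$ are then the $a_i$ whose residue modulo $k$ equals that of $b$, i.e.\ an arithmetic progression of common difference $k$. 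But $D$ forces them to be the progression of difference $j$, and for $n$ large the two prescribed sets have different cardinalities (about $n/k$ against $n/j$, with $j<k$), a contradiction. Thus $D\in\age(\widehat G_j)\setminus\age(\widehat G_k)$, and together with the clique argument the ages of $\widehat Q_k$ for distinct $k$ are pairwise incomparable.

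\textbf{The main obstacle.} The delicate point, used implicitly above, is the localization statement \emph{every sufficiently long induced path of $\widehat G_k$ lies in a single component}; this is the congruence-case analogue of Lemma \ref{lem:longenough}, and without it the vertex $b$ could ``hide'' in a component meeting a split path and fake a wrong period (short split paths genuinely occur). I would prove it by residue bookkeeping: along any induced path one always has $\rho_i\neq\rho_{i+1}$, while for non-consecutive indices lying in different components $\rho_i=\rho_{i'}$ is forced. Inspecting the vertex just past a maximal single-component run shows that a run of three or more vertices would force two consecutive (hence distinct) residues to coincide, so every run has at most two vertices; a further analysis of the equalities and inequalities imposed on the residues of the successive runs then bounds the number of runs in terms of $k$. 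This bound gives the localization and closes the argument, and it is the step I expect to require the most care.
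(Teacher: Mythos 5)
Your overall architecture is sound, and one piece is genuinely neater than the paper's: path-minimality via Proposition \ref{prop:pathminimal} is exactly how the paper itself begins (it then adds a self-contained proof only to avoid the wqo machinery), and your clique-number invariant (the clique number of $\widehat G_k$ equals $k$, proved correctly by observing that no residue can occur twice in a clique) yields the non-inclusion $\age(\widehat G_k)\not\subseteq\age(\widehat G_j)$ for $j<k$ with no structural analysis whatsoever, whereas the paper handles both directions with a single ``path in a component plus external vertex'' gadget. Your period detector $D$ is essentially that same gadget (the non-neighbours of the external vertex along the path form an arithmetic progression of common difference $k$), finished by a cardinality count instead of the paper's direct pattern comparison; that part of the argument is fine \emph{once its hypothesis is available}.

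The genuine gap is the localization lemma, which you correctly single out but do not prove: every sufficiently long induced path of $\widehat G_k$ lies in a single component (Claim 2 in the paper's proof of Proposition \ref{prop:congruences}, which shows that length six suffices, uniformly in $k$). Without it, an embedding of $D$ into $\widehat G_k$ need not send the path $a_0,\dots,a_n$ into one component, so the analysis of where $b$ can sit never gets started, and the clique argument alone gives only one of the two required non-inclusions. The first step of your sketch is correct: if a run of three or more consecutive path vertices lies in one component and the path leaves that component, the vertex just past the run is forced to share its residue with two consecutive (hence residue-distinct) vertices of the run, a contradiction; so all runs have at most two vertices. But the second step, ``a further analysis of the equalities and inequalities imposed on the residues of the successive runs then bounds the number of runs,'' is precisely where the entire difficulty of the paper's proof is concentrated, and in your proposal it is asserted rather than carried out. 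It cannot come for free: the paper exhibits an induced path of length five, namely $(0,0),(1,1),(0,k),(0,k+1),(1,2k),(0,2k+1)$, which splits into five runs across two components, so short multi-run paths genuinely exist, and the constraints one must exploit (equal residues between any two non-consecutive vertices lying in different components) are global and lead to a case analysis over which components the successive runs occupy --- this is what statements $(1)$--$(5)$ of the paper's Claim 2 do. A bound depending on $k$ would indeed suffice for your application, but it still has to be established; as written, your proof of Proposition \ref{prop:congruences} is complete only modulo this unproven lemma.
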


 The fact that $\widehat Q_k$ is path-minimal follows from Proposition \ref{prop:pathminimal}. We give a self-contained proof of the full proposition above avoiding the use of wqo.  It  relies on  claims 2, 3 and 4 stated below.

For $i\in \NN$, let $\NN_i:= \{(i, j) : j\in \NN\}$ and let $\widehat G_k(i)$ be the graph induced by $\widehat G_k$ on $\NN_i$. This is an infinite path that  we  call  a \emph{component} of $\widehat G_k$. The restriction  $\widehat Q_k$ of $\widehat G_k$ to the set $X_i:= \{(i,j) : j< i+5\}$ is an induced path of length $i+5$  that we call also a component of $\widehat Q_k$.
We note that $\widehat Q_k$ is a union of finite paths of unbounded length.

\noindent \textbf{Claim 1:} $\delta(\widehat G_2)=3$  and $\delta(\widehat G_k)=2$ for $k\geq 3$.

\begin{proof} Let $a:=(x,y)$ and $a':=(x',y')$ be two distinct vertices. We claim that their distance is at most $3$ if $x\not = x'$ or $k>2$, and at most $3$ if $x=x'$ and $k=2$. Suppose that $x\not =x'$, eg.,  $x<x'$. If $y' =y$ then   $(x,y)\sim (x',y+1) \sim (x', y)$ and the distance is $2$. Suppose that $y\not =y'$, e.g., $y<y'$. If $y'\not \equiv_k y$ then $(x,y)\sim (x',y')$ and the distance is $1$. If  $y' \equiv_k y$ then $(x,y)\sim (x',y+1) \sim (x', y')$ and the distance is $2$. Suppose that $x=x'$. We may suppose that $y\not =y'$  otherwise the distance is $0$. We may thus also suppose $y<y'$. Let $r$ be the residue of $y'-y$ mod $k$, i.e. $y'=my+r$ with $0\leq r<k$. If $r=0$ then $(x,y)\sim (x+1,y+1) \sim (x',y')$ and thus $d_{\widehat G_k}(a,a')\leq 2$.  If $k=2$ and $r=1$ then either $m=1$ and  thus $(x,y)\sim (x, y')$, and the distance is $1$, or $m>1$,  in which case  $(x,y)\sim (x+1,y+1) \sim (x, y'-1)\sim (x,y')$ and hence $d_{\widehat Q_k}(a,a')\leq 3$. If $k>2$ let $r':= 2$ if $r=1$ and $r':= 1$ if $r>1$. Then  $(x, y) \sim (x+1,y+r') \sim (x,y')$ and hence $d_{\widehat G_k}(a,a')\leq 2$.
\end{proof}

\noindent \textbf{Claim 2:} An induced  path of length at least six of $\widehat G_k$ is necessarily  included  in a component of $\widehat G_k$.

\begin{proof} Let $R:=((x_i,y_i))_{i\geq 0}$ be an induced path in $\widehat G_k$ of length at least four. Then the following four statements below hold:
\begin{enumerate}[$(1)$]
 \item For all $i$ and for all $j \geq i+2$ we have $x_i=x_{j}$ or  $x_i=x_{j+1}$.\\
\emph{Proof:} Suppose  $x_i \neq x_{j}$ and $x_i \neq x_{j+1}$.  Since $(x_i,y_i) \not \sim (x_{j}, y_{j})$ and $(x_i,y_i) \not \sim (x_{j+1}, y_{j+1})$,  $y_{j}  \equiv_k y_i  \equiv_k  y_{j+1} $. Since $(x_{j}, y_{j})\sim(x_{j+1}, y_{j+1})$ and $y_j  \equiv_k  y_{j+1}$ we must have $x_{j}= x_{j+1}$ and  $|y_{j}- y_{j+1}|=1$ which is impossible since $k\geq 2$.$\blacksquare$
  \item For all $i$ and for all $j \geq i+2$, $x_i$ is equal to at least three elements among $x_j,x_{j+1},x_{j+2},x_{j+3}$.\\
\emph{Proof:} Suppose for a contradiction that $j\leq r<s\leq j+3$ such that $x_i\not \in  \{x_r,x_s\}$. Since $R$ is induced we have $y_r\equiv_ky_i \equiv_k y_s$. It follows from (1) that $s-r\neq 1$.  There are three cases to consider.

 \noindent \textbf{Case 1:} $r=j$ and $s=j+2$.\\
It follows from (1) that $x_{j+1}=x_i=x_{j+3}$. Since $x_{j+3}\neq x_{j}$ and $R$ is induced we infer that $y_{j+3}\equiv_k y_{j}$. But then $y_{j+3}\equiv_k y_{j+2}$ contradicting $x_{j+3}\neq x_{j+2}$.

 \noindent \textbf{Case 2:} $r=j+1$ and $s=j+3$.\\
This follows  from Case 1 and symmetry.

 \noindent \textbf{Case 3:} $r=j$ and $s=j+3$.\\
It follows from (1) that $x_{j+1}=x_i=x_{j+2}$ and hence $|y_{j+1}-y_{j+2}|=1$. Furthermore, $y_{j+3}\equiv_k y_{j+1}$ and $y_j\equiv_k y_{j+2}$. Hence, $y_{j+1}\equiv_ky_i\equiv_k y_{j+2}$ contradicting $|y_{j+1}-y_{j+2}|=1$.$\blacksquare$

  \item If $R$ has length at least five, there exists at most one index $i\geq 2$ such that $x_0\neq  x_i$.\\
\emph{Proof:} Suppose $2\leq i<j$ such that $x_i\neq x_0\neq x_j$. It follows from (1) that $x_{i+1}=x_0$. Then $j\neq i+1$. It follows from (2) that we may assume without loss of generality that $j=i+2$. Then $(x_{i+3},y_{i+3})\in R$  or $i>2$ and hence $(x_{i-1},y_{i-1})$ is not a neighbour of $(x_{0},y_{0})\in R$. We obtain a contradiction by applying (2) to the sequence $(x_0,y_0)$ and $(x_i,y_i),(x_{i+1},y_{i+1}),(x_{i+2},y_{i+2}),(x_{i+3},y_{i+3})$ or to the sequence $(x_0,y_0)$ and $(x_{i-1},y_{i-1}),(x_{i},y_{i}),(x_{i+1},y_{i+1}),(x_{i+2},y_{i+2})$.$\blacksquare$

\item Let $(x_0,y_0),(x_1,y_1),...,(x_6,y_6)$ be an induced path in $\widehat G_k$ for $k\geq 2$. If there exists $i$ such that $x_0\neq x_{i}$, then $i=1$.\\
\emph{Proof:} If not, then by (3) there is a unique $i\geq 2$ such that $x_0\neq x_i$ . If $i\neq 4$, then there are then at least two indices $r,s\geq 2$ such that $|r-s|=1$ and $(x_r,y_r)$ and $(x_s,y_s)$ are not adjacent to $(x_{i},y_{i})$. Hence, $y_r\equiv_k y_0\equiv_k y_s$. Since $x_r=x_0=x_s$ we infer that $|y_s-y_r|=1$ contradicting $y_s \equiv y_r \bmod k$. We now assume $i=4$. Hence, $x_0=x_2=x_3=x_5=x_6$. We now consider the path $(x'_0,y'_0)=(x_6,y_6), (x'_1,y'_1)=(x_5,y_5),...,(x'_5,y'_5)=(x_1,y_1),(x'_6,y'_6)=(x_0,y_0)$. Since $x'_0=x'_4$ we apply the previous argument (as in the case $i_0\neq 4$) to this path and we obtain a contradiction.$\blacksquare$

  \item Let $(x_0,y_0),(x_1,y_1),...,(x_6,y_6)$ be an induced path in $\widehat G_k$ for $k\geq 2$. Then $x_0=x_1=...=x_6$.\\
\emph{Proof:} It follows from (4) that $x_0=x_2=...=x_6$. By symmetry, $x_6=x_4=x_3=x_2=x_1=x_0$. Therefore, $x_0=x_1=...=x_6$ as required.$\blacksquare$
\end{enumerate}
\end{proof}

The value six is best possible:  The sequence  $(0,0),(1,1),(0,k),(0,k+1),(1,2k),(0,2k+1)$  yields an induced path in $\widehat G_k$  which is not included in a  component of $\widehat G_k$.


%
\noindent \textbf{Claim 3:} The graph $\widehat Q_k$ is path-minimal.

\begin{proof}
Let $Y$ be a subset of $V(\widehat Q_k)$ such that the graph induced by  ${\widehat Q_k}$ on $Y$ contains finite induced paths of unbounded  length. From Claim 2 above, each induced path  of length  at least six is included in some component of $\widehat G_k$, hence in a component  of $\widehat Q_k$.  Since the components of $\widehat Q_k$ are finite, there is an infinite sequence of integers $(i_n)_{n\in \NN}$ such that $\widehat Q_k {\restriction  Y\cap X_{i_n}} $  contains a path on a subset $Z_{i_n} $ of length $n$. According to Lemma \ref{lem:sameage},   $\widehat Q_k$ embeds in $\widehat Q_k{\restriction \bigcup_{i_n\in \NN} Z_{i_n}}$.
\end{proof}

\noindent \textbf{Claim 4:} For each $k\geq 2$, $\age(\widehat Q_k)= \age(\widehat G_k)$ and if $k\neq k'$, then $\age(\widehat G_k)$ and $\age(\widehat G_{k'})$ are incomparable with respect to set inclusion.

\begin{proof}The first part is obvious. For the second part, let $R$ be a path of length $m\geq \max \{k+1, 6\}$ in a component of $\widehat G_k$.  Let  $x$ be a vertex in another component.  The trace on $R$ of the neighborhood of $x$ is a union of paths, each of length $k-1$;  two such paths consecutive in $R$ being separated by a single vertex. Then  $\widehat G_k{\restriction R\cup \{x\}}$ cannot be embedded in $\widehat G_{k'}$. Indeed, otherwise, by Claim 2, the path $R$ of length $m$ will be mapped in a component of $\widehat G_{k'}$ on a path $R'$ and the element $x$ will be mapped on an element $x'$  in another component. Hence  $\widehat G_k{\restriction R\cup \{x\}}$ will be  isomorphic to $\widehat G_{k'}{\restriction R'\cup \{x'\}}$. But the trace on  $R'$ of the neighborhood of $x'$ is a union of paths, each of length $k'-1$ and   two such paths consecutive in $R'$ are separated by a single vertex. Since $k\not =k'$ this is not possible.
\end{proof}

Sequences other  than  $u:=(u(n))_{n\in \NN}$, where $u(n)$ is the residue of $n$ modulo $k$, will produce path-minimal graphs with different ages. This is the subject of the next two subsections.

\subsection{A general construction of path-minimal graphs with incomparable ages}\label{generalconstruction}

Our aim in this subsection  is to show that there is an operation $\star$ on $L:= \{0, 1\}$ and  $2^{\aleph_0}$ uniformly recurrent words in such a way that the ages of the graphs $\widehat Q_{(u,\star)}$ given in Proposition \ref{prop:pathminimal} are pairwise incomparable w.r.t. inclusion. Since $\widehat Q_{(u,\star)}$ and $\widehat G_{(u,\star)}$ have the same age, we only need to  consider $\widehat G_{(u,\star)}$.
We restrict ourselves to $L:= \{0, 1\}$ while several arguments extend to general alphabets. We leave the general case to further investigations.

The proof has two parts, developed in the next two subsections.  We show first that long paths in $\widehat G_{(u,\star)}$ are contained  in components (Lemma \ref{lem:longenough}). Next, we use that fact to prove that well-chosen words provide distinct ages (Proposition  \ref {propdistinctages}).

\subsubsection{Long paths are contained  in components}

Let $L:=\{0,1\}$. The \emph{first projection}  is the operation  defined by $i\star j=i$ for all $i,j$, while the \emph{second projection} is defined by $i\star j=j$ for all $i,j$.
The \emph{Boolean sum} is the operation $\dot +$ on $\{0, 1\}$ such that $0{\dot +} 0= 1{\dot +}1=0$ and $0{ \dot + }1=1{\dot +}0=1$. This operation is also the sum modulo two (cf. Section \ref{congruences-path-minimal}). Note that the term Boolean Sum is also used for the inclusive \emph{or} in Boolean Algebra in which case $1+1=1$. If $\star$ is an operation on $\{0,1\}$, its \emph{dual}, $\star^d$,  is the operation defined by $i\star^dj:= j\star i$. Let  $\overline \star$ be the operation on $\{0,1\}$ defined by $i\overline \star j:= (i {\dot+}1)\star (j{\dot +}1)$. The Boolean sum is invariant under the two transformations defined above (i.e. $(\dot +)^d=\dot +$ and $\overline {\dot +}=\dot +$). The dual of the  first projection is the second, while  $i\overline \star j= i{\dot +} 1$ if $\star$ is the first projection and $ i\overline \star j= j{\dot +} 1$ if $\star$ is the second.

Let $u:= (u_n)_{n\in \NN}$ be a $0$-$1$ sequence; let $u{\dot+} \bf{1}$ be the sequence defined by  $u{\dot+}{\bf 1}(n):= u(n) {\dot+}1$.
\begin{lemma} Let $u$  be a $0$-$1$ sequence. Then $\widehat G_{(u, \star)}= \widehat G_{(u{\dot +}{\bf 1}, \overline \star)}$ and  $\age (\widehat G_{(u, \star)})=\age (\widehat G_{(u, \star^d)})$.
\end{lemma}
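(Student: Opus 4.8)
The plan is to treat the two assertions separately: the first is a direct computation on edge sets, and the second is a transfer, through label-deletion, of an age identity already furnished by the duality lemmas at our disposal.

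For the equality $\widehat G_{(u, \star)}= \widehat G_{(u{\dot+}{\bf 1}, \overline \star)}$, I would first note that both graphs live on the same vertex set $\NN\times\NN$ and that, after the labels are deleted, the path structure inside each component $\{i\}\times\NN$ is identical, since adding ${\bf 1}$ to the word changes labels but not the underlying path. Hence it suffices to check that the cross-component adjacencies coincide. Fixing $i<j$ and vertices $(i,x),(j,y)$, the edge in $\widehat G_{(u{\dot+}{\bf 1}, \overline \star)}$ is present exactly when $\big((u{\dot+}{\bf 1})(x)\big)\,\overline\star\,\big((u{\dot+}{\bf 1})(y)\big)=1$. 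Unfolding the definition of $\overline\star$ and using that ${\dot+}1$ is an involution on $\{0,1\}$, this reduces to
\[
\big((u(x){\dot+}1){\dot+}1\big)\star\big((u(y){\dot+}1){\dot+}1\big)=u(x)\star u(y),
\]
which is precisely the adjacency condition in $\widehat G_{(u, \star)}$. So the two graphs are literally equal, not merely isomorphic.

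For the equality $\age (\widehat G_{(u, \star)})=\age (\widehat G_{(u, \star^d)})$, I would first establish the corresponding identity at the level of labelled graphs and then push it down to the unlabelled level. Recalling $G_{(u,\star)}=P_u\cdot_{\star}\omega$, Lemma \ref{duality-equality} gives $P_u\cdot_{\star}\omega=P_u\cdot_{\star^d}\omega^d$, and Corollary \ref{cor:zeta-omega}, applied to the labelled graph $P_u$ with operation $\star^d$, yields that $P_u\cdot_{\star^d}\omega^d$ and $P_u\cdot_{\star^d}\omega=G_{(u,\star^d)}$ have the same age. Chaining these gives $\age(G_{(u,\star)})=\age(G_{(u,\star^d)})$ as ages of labelled graphs.

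It remains to transfer this to the underlying graphs, and this is the only point requiring a small argument, hence the place to be careful. The key observation is that for any labelled graph $H$ one has $\age(\widehat H)=\{\widehat F : F\in\age(H)\}$: every finite induced subgraph of $\widehat H$ arises, with the labels it inherits from $H$, as the underlying graph of a member of $\age(H)$, and conversely label-deletion sends embeddings of labelled graphs to embeddings of graphs since it leaves adjacency unchanged. Consequently equal labelled ages force equal unlabelled ages, and applying this to $H=G_{(u,\star)}$ and $H=G_{(u,\star^d)}$ completes the proof. I expect the first equality to be entirely mechanical; the only subtlety is this last compatibility between $\age(\widehat{\,\cdot\,})$ and $\age(\cdot)$, which must be stated explicitly because our notion of embedding of labelled graphs matches labels exactly.
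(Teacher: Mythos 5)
Your proposal is correct and follows essentially the same route as the paper: both parts rest on the same two ingredients, the literal identity of Lemma \ref{duality-equality} and the age identity of Corollary \ref{cor:zeta-omega}, merely applied in the opposite order. The only addition is that you spell out the transfer from equality of labelled ages to equality of unlabelled ages (via $\age(\widehat H)=\{\widehat F : F\in\age(H)\}$), a point the paper absorbs silently by applying Corollary \ref{cor:zeta-omega} directly to the hatted graphs; this is a legitimate and slightly more careful bookkeeping step, not a different argument.
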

\begin{proof}
The verification of the first equality is immediate. For the second, observe that from Corollary \ref{cor:zeta-omega},  $\age (\widehat G_{(u, \star)}):= \age (\widehat{P_{u} \cdot_{\star}\omega})=\age (\widehat{P_{u} \cdot _{\star}\omega^d})$. Since by Lemma \ref{duality-equality},   $\widehat {P_{u} \cdot _{\star}\omega^d}= \widehat {P_{u}\cdot_{\star^d}\omega}=\widehat G_{(u, \star^d)}$, the second equality follows.
\end{proof}

This fact suggests that we consider operation $\star$ and $\star'$ as equivalent for each $\star'\in \{\star, \star^d, \overline \star, (\overline \star)^d\}$.

 Let $u$ be a  uniformly recurrent and non constant word on $L$.  For $i\in \{0,1\}$  there is a maximum, denoted by  $l_i(u)$, of the  length of factors of $i's$ in $u$.  Let $l(u):= \max \{l_0(u), l_1(u)\}$ and  set $\varphi(u):= 2(l(u)+1)+1$ if $u$ is not the periodic word $010101\dots$; otherwise set $\varphi(u):=6$. Note that $\varphi(u)= \varphi(u\dot{+}1)$.

\begin{figure}[h]
\begin{center}
\includegraphics[width=150pt]{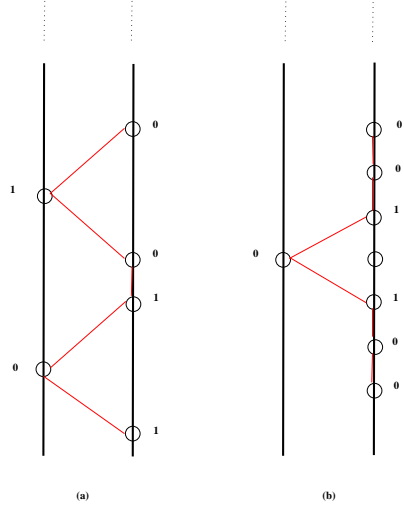}
\end{center}
\caption{Two examples of paths not contained in one component of  $\widehat G_{(u, \star)}$ if $\star$ is the Boolean sum. The thick vertical lines represent components of $\widehat G_{(u, \star)}$.}
\end{figure}


\begin{lemma}\label{lem:longenough}Let $\star$ be  the Boolean sum or an operation equivalent to the second projection. If $u$ is any  uniformly recurrent and non constant word, then every  path of length at least  $\varphi(u)$ in $\widehat G_{(u, \star)}$ is contained  in one component.
\end{lemma}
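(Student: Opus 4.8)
The plan is to reduce, via the lemma just proved, to two representative operations and then argue directly on the combinatorics of an induced path. Since $\widehat G_{(u,\star)}=\widehat G_{(u\dot{+}\mathbf 1,\overline\star)}$ and $\varphi(u)=\varphi(u\dot{+}\mathbf 1)$, replacing $\star$ by $\overline\star$ changes neither the graph nor the threshold; and by Lemma \ref{duality-equality} passing from $\star$ to $\star^d$ merely reverses the chain indexing the lexicographical sum, which alters neither the partition into components nor the notion of induced path. I will therefore prove the statement for the two operations $\star=\dot{+}$ (Boolean sum) and $\star=$ second projection, and in fact for $P_u\cdot_\star C$ over an \emph{arbitrary} chain $C$, so that all four projection variants (first/second projection and their $\overline{\phantom{\star}}$-images, which only substitute $u\dot{+}\mathbf 1$ for $u$ or $\omega^d$ for $\omega$) are subsumed at once. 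Write a putative escaping induced path as $v_0,\dots,v_L$ with $v_t=(x_t,y_t)$, $x_t$ the component and $y_t$ the position inside it, and set $\ell_t:=u(y_t)$. A standing observation is that inside one component the traversed positions are strictly monotone (consecutive positions differ by $1$ and all vertices are distinct, so a block cannot reverse), so the labels read along a block form a factor of $u$ or of its reverse, and any maximal stretch of constant label inside a block has at most $l(u)$ vertices.

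For the Boolean sum one has $(p,r)\sim(q,s)$ with $p\ne q$ iff $u(r)\ne u(s)$, and the usual path adjacency inside a component. The engine is the dichotomy proved exactly as statement $(1)$ of Claim 2: if $j\ge i+2$ and $x_i\notin\{x_j,x_{j+1}\}$, the non-edges $v_iv_j,\,v_iv_{j+1}$ force $\ell_i=\ell_j=\ell_{j+1}$, so $v_jv_{j+1}$ is not a cross edge, whence $x_j=x_{j+1}$, $|y_j-y_{j+1}|=1$ and $u(y_j)=u(y_{j+1})$. Taking $i=0$, the indices $t\ge2$ with $x_t\ne x_0$ split into maximal runs of consecutive indices, each confined to one fixed component and traversing a block of constant label; by monotonicity such a block lies in a single run of $u$, hence has at most $l(u)$ vertices. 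Dually, every home-component ($c^*$) vertex exposed to the excursion must, by the cross-non-edge condition, carry that same constant label, so the home block facing the excursion is again a run of $u$ of length at most $l(u)$. Adding the at most two crossing vertices yields total size at most $2(l(u)+1)=\varphi(u)-1$. The one non-constant uniformly recurrent word with $l(u)=1$ is the alternating word $010101\dots$, i.e. the periodic case already settled in Claim 2 with sharp bound $6=\varphi(u)$.

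For the second projection, $(p,r)\sim(q,s)$ with $p$ below $q$ in $C$ holds iff $u(s)=1$, so cross-adjacency depends only on the label of the higher vertex, and a higher vertex with label $1$ is adjacent to \emph{all} path vertices below it. Suppose the path meets at least two components and let $c^*$ be the greatest it meets. Being connected, the path must enter $c^*$ through a cross edge whose $c^*$-endpoint is a label-$1$ vertex $z$; as $z$ has degree at most $2$ and dominates everything below $c^*$, there are at most two path vertices below $c^*$. Moreover, if $w$ sits at the foot of a cross edge whose top is $(c^*,m)$, then $w$ must be non-adjacent to every earlier $c^*$-vertex of that block, and by the adjacency rule all those vertices carry label $0$; hence they form a run of $0$'s of length at most $l(u)$. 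So each crossing $c^*$-block is short, and with at most two low vertices one again bounds the total length by $2(l(u)+1)<\varphi(u)$.

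The main obstacle in both cases is not any single conceptual step but the exhaustive local bookkeeping that upgrades the dichotomy into the exact count. Concretely, one must rule out that a path escapes through several well-separated excursions — the analogue of statements $(3)$--$(4)$ of Claim 2, now complicated by runs of length up to $l(u)$ instead of $1$ — and then tally the crossing vertices against the run bound to reach precisely $2(l(u)+1)$. Isolating the alternating word as the exceptional periodic case is exactly what forces the separate value $\varphi(u)=6$ there, and I expect the verification of the at-most-one-excursion claim, together with the careful accounting of the two transition vertices, to be the bulk of the work.
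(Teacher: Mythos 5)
Your reduction to the Boolean sum and the second projection (via $\widehat G_{(u,\star)}=\widehat G_{(u\dot{+}{\bf 1},\overline{\star})}$, the invariance $\varphi(u)=\varphi(u\dot{+}{\bf 1})$, and chain reversal through Lemma \ref{duality-equality}) is correct and matches how the paper itself frames the problem, and your dichotomy for the Boolean sum (for $j\ge i+2$, either $x_i\in\{x_j,x_{j+1}\}$, or $v_j,v_{j+1}$ lie in one common component on consecutive positions with both labels equal to $\ell_i$) is sound --- it is the right analogue of statement $(1)$ of Claim 2 in the proof of Proposition \ref{prop:congruences}. The problem is everything after that. Your tally for the Boolean sum --- one excursion of at most $l(u)$ vertices, one home block of at most $l(u)$ vertices, plus ``at most two crossing vertices'', total $2(l(u)+1)$ --- is neither justified nor correct as written: an excursion in the interior of the path is flanked by \emph{two} home blocks, so the count you actually set up only yields about $3l(u)+2$ vertices, which is perfectly compatible with length at least $\varphi(u)=2(l(u)+1)+1$ whenever $l(u)\ge 2$. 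Closing the gap requires structural facts you never establish: for instance, a home crossing vertex must carry label $\ell_0\dot{+}1$ and is therefore cross-adjacent to \emph{every} vertex of the excursion, so inducedness forces any excursion with a crossing neighbour to be a singleton; and one needs a global bound on the number of components the path can meet (the actual proof's Claims 1 and 2: at most three components always, at most two once the length is at least five, proved by $4$-cycle and degree-two arguments plus an induction) before the two-component configurations can be counted (its Claims 3 and 4, split according to whether one trace is a singleton). You explicitly defer exactly this --- the ``at-most-one-excursion claim'' and the ``careful accounting'' --- and merely ``expect'' it to go through; but that deferred part \emph{is} the substance of the lemma, not routine bookkeeping.

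The same gap recurs in your second-projection argument. You correctly obtain that at most two path vertices lie below the top component $c^{*}$, but you never prove that there is exactly one (the published Claim 1 for this case, which uses a $4$-cycle argument plus connectivity to show that two low vertices force $|V(R)|=3$, contradicting the assumed length). Without that step, two low vertices allow the path to split into as many as three $c^{*}$-blocks, and your claimed bound $2(l(u)+1)$ simply does not follow from ``each crossing block is short''. In short, the proposal is a sensible plan whose skeleton parallels the paper's proof, but the combinatorial core that makes the threshold $\varphi(u)$ work is missing.
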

\begin{proof} Set $G:=\widehat G_{(u, \star)}$ and for $i\in \NN$, set $X_i:= \{i\}\times \NN$. Throughout this proof, $R$ denotes a finite induced path of $G$, $V(R)$ denotes its set of vertices and $E(R)$ its set of edges. We let $V_0(R)$, respectively $V_1(R)$,  denote the vertices of $R$ labelled $0$, respectively labelled 1, in $G_{(u, \star)}$.

\begin{enumerate}[$(\bullet )$]

\item  $\star$ is the Boolean sum.
We observe first  that  since $\widehat G_{(u, \star)}=\widehat G_{(u\dot{+}1, \star)}$, $R$ is a path of $\widehat G_{(u\dot{+}1, \star)}$. Since $\varphi(u)= \varphi(u\dot{+}1)$, we may suppose that the values of $u$ are defined up to complementation. Also, and since $R$ is connected $V_0(R)\neq \varnothing \neq V_1(R)$.

\noindent \textbf{Claim 1:} Every induced path, regardless of its length, is contained in at most three components of $G$.\\
\noindent \emph{Proof of Claim $1$:} If $R$ is contained in at most two components, then we are done. Next, suppose that $R$ is contained in at least three components of $G$.\\
\noindent \textbf{Sub Claim:} Up to the complement of $u$, there are three vertices $a,b,c$ of $R$ contained in three distinct components of $G$ such that $\ell(a)=1$,  $\ell(b)=0$ and $\ell(c)=0$.\\
\emph{Proof of Sub Claim:} Let $X$ be a component meeting $R$ in a vertex $a$ labeled $1$. If all vertices of $R$ not in $X$ have the same label $0$ then pick $b$ and $c$ not in $X$ and in two distinct components. If all vertices of $R$ not in $X$ have the same label $1$, then since $R$ is connected there must be a vertex $a'$ in $X$ labeled $0$, then choose $a'$ and pick $b$ and $c$ not in $X$ and in two distinct components.  Replacing $u$ by its complement $u\dot{+}1$, the vertices $a', b, c$ satisfy the conclusion of the claim. Else, there exists a component $Y\neq X$ meeting $R$ in a vertex $b$ labeled $0$. If all the remaining vertices of $R$ not in $X\cup Y$ are labeled $0$, then we have two vertices in two distinct components  and not in $X$ both labeled $0$. Else if there exists a  vertex $c$ of $R$ not in $X\cup Y$ which is labeled $1$, then replacing $u$ by its complement,  $u\dot{+}1$, the vertices $a', b', c'$ with $a':=b, b':= a, c':= c$,  satisfy the conclusion of the claim. This completes the proof of the sub claim.$\blacksquare$

Denote by $X_a$, $X_b$ and $X_c$ the components of $G$ containing $a$, $b$ and $c$, respectively. We claim that $V(R)\subseteq X_a\cup X_b\cup X_c$. Indeed, let $x\not \in  X_a\cup X_b\cup X_c$. If $\ell(x)=1$, then $x$ is adjacent to both $b$ and $c$ and therefore the graph induced on $\{a,b,c,x\}$ is a 4-cycle. Else if $\ell(x)=0$, then $x$ is adjacent to $a$ and hence $a$ has degree at least three. Hence, $x$ cannot be a vertex of $R$. This proves $V(R)\subseteq X_a\cup X_b\cup X_c$ and therefore $R$ is contained in three components of $G$. This completes the proof of Claim 1. \\
\textbf{Claim 2:} Every path of length at least five is contained in at most two components of $G$.\\
\emph{Proof of Claim $2$:} We  prove Claim 2 by induction on the length of the path contained in $G$. For the basis case we prove that every path of length five is contained in at most two components of $G$. In fact, we will prove that every path that contains the vertices $a,b,c$ defined in the Sub Claim of Claim 1 will have length at most four.\\
 Suppose that $c$ is not an end vertex of $R$. Let $c'\in R$ be the neighbor of $c$ distinct of $a$. Then $\ell(c')=1$ and $c'\in X_b$. Indeed, if $\ell(c')= 0$ then $c'\in X_c$ and since $\ell(a)=1$, $\{a, c'\}$ is an edge, which is impossible. Thus $\ell(c')=1$. If $c'\not \in X_b$, then $\{b, c’\}$ is an edge, thus $\{a, c, c', b\}$ forms a $4$-cycle. This is impossible.
 Similarly, if $b$ is not an end vertex, and if $b’$ is the  neighbor of $b$ in $R$ distinct from $a$ then $\ell(b')=1$ and $b'$ is in $X_c$. Now, if there are $a'$ and $b’$ as above, the path $c’,c, a, b,b’$  cannot be extended. Suppose not and let $x\not \in \{a,b,c,b',c'\}$ a vertex that extends the path $c’,c, a, b,b’$. If $\ell(x)=0$, then $x\in X_a$ because otherwise $a$ would have degree three in $R$ which is not possible. But then  $\{a,b,c,b',c',x\}$ induces a 6-cyle. Else if  $\ell(x)=1$, then $b$ or $c$ would have degree three which is impossible.

 Now, suppose that $b$ is an end vertex.  Then  $c$ is not an end vertex because $R$ has length five. Let $c'\in R$ be the neighbor of $c$ distinct of $a$. Then, as proved in the previous paragraph, $\ell(c')=1$ and $c'\in X_b$. Let $d$ be the neighbor of $c'$ in $R$ distinct from $c$. Then $\ell(d)=0$ and $d\in X_a$. Indeed, suppose $\ell(d)=1$, then $d$ is adjacent to $c$ or to $b$ which is impossible. It follows then that $d\in X_a$ because otherwise $d$ would be adjacent to $a$, making $a$ of degree three in $R$ which is impossible. We now prove that $\{a,b,c,c',d\}$ cannot be extended. Indeed, let $x\not \in V(R)$. If $\ell(x)=0$, then $x$ is adjacent to $c'$ or $a$, and this is impossible. Else if $\ell(x)=1$, then $x$ is adjacent to $b$ or $c$, and this is impossible.

For the inductive step, let $k>5$ be  the length of $R:=x_0,...,x_k$;  assume that every path of length $k-1$ is contained in at most two components of $G$.  By the induction hypothesis, the path $x_1,...,x_k$ is contained in at most two components $X_n$ and $X_m$ of  $G$. If $m=n$ or $x_0\in X_m\cup X_n$, then $R$ is contained in at most two components of $G$ and we are done. So we may  assume that $m\neq n$ and $x_0\not \in X_m\cup X_n$. We argue to a contradiction. We may assume that $x_1\in X_n$. Since the path $x_0,...,x_{k-1}$ has a smaller length than that of $R$, it  is contained in two  components of $G$,  namely $X_n$ and the component containing $x_0$. Thus  $V(R)\cap X_m=\{x_k\}$.  Next, we may assume without loss of generality that $\ell(x_0)=0$ (otherwise, replace $u$ by $u\dot{+} 1$). Then $\ell(x_k)=0$ (this is because $\{x_0,x_k\}$ is not an edge and $x_0$ and $x_k$ are in different components) and $\ell(x_{1})=1$ (this is because $\{x_0,x_{1}\}$ is an edge and $x_0$ and $x_{1}$ are in different components). From $\ell(x_{1})=1$ and $\ell (x_k)=0$ we get that $\{x_1,x_k\}$ is an edge hence $k=2$, a contradiction. This completes the proof of Claim 2.\\
We should mention that the bound in Claim 2 is sharp. For an example see Figure \ref{fig3}.\\

\begin{figure}[h]
\begin{center}
\includegraphics[width=100pt]{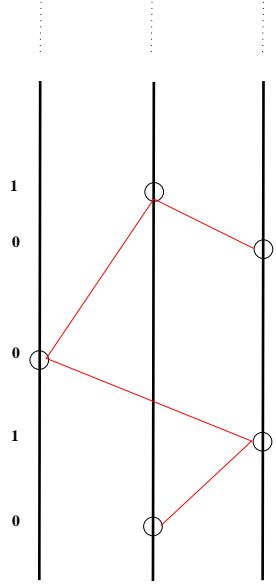}
\end{center}
\caption{An example of a path of length four contained in three components in the case $\star$ is the Boolean sum. The thick vertical lines represent components of $\widehat G_{(u, \star)}$.}
\label{fig3}
\end{figure}


\noindent \textbf{Claim 3:} If $R$ is contained in two distinct components $X_m$ and $X_n$ of $G$ such that $|V(R)\cap X_m|=1$ or $|V(R)\cap X_n|=1$, then the length of $R$ is less than $\varphi(u)$.\\
\emph{Proof of Claim 3:} Let $R$ be a path that is contained in $G$ so that $|V(R)\cap X_m|=1$. Let $x$ be such that $V(R)\cap X_m:=\{x\}$. Without loss of generality we may assume that $\ell(x)=0$ (otherwise, replace $u$ by $u\dot{+} 1$). We suppose that the length of $R$ is at least $\varphi(u)$ and we argue to a contradiction. Since the degree of $x$ in $R$ is at most two we infer that the graph induced by $V(R)\setminus \{x\}$ on $X_m$ is the direct sum of at most two paths $R_1$ and $R_2$ where $R_1$ or $R_2$ could be empty. Hence $\vert V(R) \vert = \vert V(R_1)\vert +\vert V(R)\vert +1$. Since the length of $R$ is at least $\varphi(u):= 2(l(u)+1)+1$,  we infer that $R_1$ or $R_2$ has length at least $l(u)+1$, say $R_1$ has length at least $l(u)+1$. From $x$ is adjacent to exactly one vertex $y$ of $R_1$ and $\ell(x)=0$ we deduce that $\ell(y)=1$ and all other vertices $z$ of $R_1$ verify $\ell(z)=0$. Hence $u$ has a factor of zero's of cardinality $l(u)+1$ which is impossible. This completes the proof of Claim 3.\\
\textbf{Claim 4:} If $R$ is contained in two distinct components $X_m$ and $X_n$ of $G$ such that  $|V(R)\cap X_m|\geq 2$ and $|V(R)\cap X_n|\geq 2$, then the length of $R$ is at most five.\\
\emph{Proof of Claim 4:} Since $R$ is connected we infer that, up to a permutation of $m$ and $n$, there are vertices $x$ and $y$ such that $x\in V(R)\cap X_m$, $\ell(x)=0$, and $y\in V(R)\cap X_n$, $\ell(y)=1$. From $\ell(y)=1$ we deduce that $|V_0(R)\cap X_m|\leq 2$. Now suppose for a contradiction that all vertices of $V(R)\cap X_m$ are labeled $0$. From $|V_0(R)\cap X_m|\leq 2$ and our assumption $|V(R)\cap X_m|\geq 2$ we deduce that $|V(R)\cap X_m|= 2$. In particular, $y$ has degree two in $R$. Therefore,  no vertex $v \in V(R)\cap X_n$  distinct from $y$ is joined to $y$ and furthermore can be labeled $1$ because otherwise the graph induced on $\{y,v\}\cup (V(R)\cap X_m)$ is a 4-cycle. But then there is no subpath of $R$ that connects   $y$ and $v$ (such a path will have to contain some vertex labelled $1$ and there are none). This proves that not all vertices of $V(R)\cap X_m$ can be labeled $0$. By symmetry we obtain that not all vertices of $V(R)\cap X_n$ can be labeled $1$. Let $t\in V(R)\cap X_m$ be labeled $1$ and let $z\in V(R)\cap X_n$ be labeled $0$.

Next,  we prove that $|V_0(R)|\leq 3$. Suppose for a contradiction that there are distinct vertices $r,s\in V(R)$ labeled $0$ and such that $\{r,s\}\cap \{x,z\}=\varnothing$. Then $\{r,s\}\nsubseteq X_m$ and $\{r,s\}\nsubseteq X_n$ because otherwise $y$ or $t$ would have degree at least three which is not possible. Say $r\in X_m$ and $s\in X_n$ and note that $t$ and $y$ have degree two. We prove  that $V(R)=\{r,s,t,x,y,z\}$. This  leads to a contradiction since $\{r,y,x\}$ and $\{s,t,z\}$ are then nontrivial connected components of $R$. For the proof, suppose  that $q\not \in \{r,s,t,x,y,z\}$ is a vertex of $G$. If $\ell(q)=0$, then $q$ is adjacent to $t$ if $q\not \in X_m$ or $q$ is adjacent to $y$ if $q\not \in X_n$. Hence, $t$ or $y$ have degree three which is impossible. Else if $\ell(q)=1$, then $q$ is adjacent to $x$ and $r$ if $q\not \in X_m$ and therefore $\{q,x,y,r\}$ induces a $4$-cycle, or $q$ is adjacent to $z$ and $s$ if $q\not \in X_n$ and therefore $\{q,t,z,s\}$ induces a $4$-cycle. In both cases we obtain a contradiction. This proves our claim $V(R)=\{r,s,t,x,y,z\}$. Similarly, we prove that $|V_1(R)|\leq 3$ and therefore $|V(R)|\leq 6$. This proves that the length of $R$ is at most five, as claimed. This completes the proof of Claim 4.

The conclusion of the lemma in the case $\star$ is the Boolean sum is obtained as follows. Let $R$ be an induced path of length at least  $\varphi(u)$ in $G$. It follows from Claim 2 and $\varphi(u)\geq 6$ that $R$ is contained in at most two components of $G$. It follows from Claims 3 and 4 that $R$ cannot be included in two distinct components of $G$. Hence, $R$ is contained in one component of $G$ as required.

\item $\star$ is the second projection, that is $i\star j=j$ for all $i,j$.
We note that since $R$ is connected, there exist $n'<n$, $x'\in V(R)\cap X_{n'}$, $x\in V(R)\cap X_{n}$  such that $\{x',x\}\in E(R)$.
Since $\{x',x\}\in E(R)$,  $\ell(x)=1$.  Since $\star$ is the second projection, $x$ is adjacent to all vertices of $V(R)\setminus X_n$. Hence, $|V(R)\setminus X_n|\leq 2$ because otherwise the degree of $x$ in $R$ would be at least three which is not possible. \\
\noindent \textbf{Claim 1:} $|V(R)\setminus X_n|=1$. Hence,  $V(R)\setminus X_n=\{x'\}$.\\
Suppose that  $|V(R)\setminus X_n|=2$.  If $y\in  V(R)\cap X_n$  distinct from $x$,  then $y$ cannot  be adjacent to $x$ (this is because $x$ has degree two in $R$).  Furthermore, $\ell(y)=0$ because otherwise $y$ would be adjacent to all vertices of $V(R)\setminus X_n$ and hence $R$ would have a $4$-cycle which is not possible. From that,  $y$ is not adjacent to any vertex of $R$ which is impossible. Consequently, $|V(R)\cap X_n|=1$. It follows that  $|V(R)|=3$.
Due to $\varphi (u) \geq 5$, we have $\vert V(R)\vert \geq 6$, a contradiction. This completes the proof of  Claim 1.

\emph{Note that since every vertex of $V(R)\setminus X_n$ is adjacent to every vertex of $V_1(R)\cap X_n$ we must have $|V_1(R)\cap X_n|\leq 2$.}

\noindent \textbf{Case 1.} $|V_1(R)\cap X_n|=1$.\\
In this case, the  length of $R$ is at most $l_0(u)+1$.\\
Indeed, we have $V_1(R)\cap X_n=\{x\}$. Hence $x$ is the only neighbour of $x'$. Thus
$x'$ is an end vertex of  $R$, the remaining vertices of $R$ are in $X_n$,  all vertices distinct from $x$ being labelled $0$. So the length of $R$ is $l_0+1$ as claimed.
Since $\varphi(u)>l_0(u)+1$ this case is not possible.\\
\noindent \textbf{Case  2.}  $|V_1(R)\cap X_n|=2$.\\
In this case,   the  length of $R$ is at most $2(l_0(u)+1)$.\\
Indeed, the vertex $x'$ is linked to the two vertices of $V_1(R)$, thus, $R\setminus \{x'\}$ has two connected components, each one made of a path in $X_n$ labelled $0$ except an end vertex belonging to $V_1(R)$. Thus the length of $R$ is $2(\ell_0(u)+1)$.
Since $\varphi(u)>2(l_0(u)+1)$ this case is not possible.
\end{enumerate}
\end{proof}

This proof is quite different from the original one. We thank one of the referees of this paper for suggesting an other approach that eventually lead us to the proof given above.

\begin{remark}Recall that $G\cdot_{\star} \omega^d$ and $G\cdot_{\star} \omega$ have the same age (Corollary \ref{cor:zeta-omega}). If the operation $\star$ is symmetric, then  $G\cdot_{\star} \omega^d$ and $G\cdot_{\star} \omega$ are equimorphic and the corresponding $\widehat Q_{(u, \star)}$'s are path-minimal. On the other hand, if $\star$ is the second projection the corresponding $\widehat Q_{(u, \star)}$'s are path-minimal but not equimorpic.
\end{remark}

%
\subsubsection{Incomparable ages}

 We complete the proof of  Theorem \ref{thm:uncoutable ages} by proving:

\begin{proposition}\label {propdistinctages}
Let $\star$ be the Boolean sum or the second projection on $\{0,1\}$. If $u$ and $u'$ are  two uniformly recurrent words which are not constant and  the factor sets $\fac (u)$ and $\fac (u')$ are nonequivalent in the sense of Definition \ref{def:equivalence} then $\age(\widehat G_{(u, \star)})$ and $\age(\widehat G_{(u', \star)})$ are incomparable w.r.t. set inclusion.
\end{proposition}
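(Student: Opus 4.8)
The plan is to prove the contrapositive of each inclusion. Since the equivalence of Definition \ref{def:equivalence} is symmetric and the hypotheses are symmetric in $u,u'$, it suffices to show that $\age(\widehat G_{(u,\star)})\subseteq\age(\widehat G_{(u',\star)})$ forces $\fac(u)$ and $\fac(u')$ to be equivalent. The governing idea is that one can recover the labels along a long induced path from its adjacencies to a single extra vertex (a \emph{probe}) lying in another component. Concretely, I would fix a long factor $w$ of $u$, realized by an induced path $R$ in one component of $\widehat G_{(u,\star)}$, and adjoin one probe vertex $x$ in another component; for the second projection I take the component of $x$ to precede that of $R$, while for the Boolean sum the choice is irrelevant by symmetry. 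The finite induced subgraph $H:=R\cup\{x\}$ lies in $\age(\widehat G_{(u,\star)})\subseteq\age(\widehat G_{(u',\star)})$, hence embeds into $\widehat G_{(u',\star)}$ via some $f$. Choosing $|w|$ large, at least $\varphi(u')+1$ and large enough that $w$ contains at least three $0$'s and three $1$'s (possible since $u$ is non-constant and uniformly recurrent), Lemma \ref{lem:longenough} forces $f(R)$ into a single component $X'$, where it occupies an interval and thus carries a factor $w'$ of $u'$, read along $f$ and possibly reversed.

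Next I would locate $x':=f(x)$. Inside $X'$ the vertex $x'$ has at most two path-neighbours, whereas $x$ has at least three neighbours in $R$ (the $0$'s or the $1$'s of $w$); since $f$ preserves adjacency, $x'$ must lie in a component distinct from $X'$. Comparing the trace of $x$ on $R$ with that of $x'$ on $f(R)$, which agree because $f$ is an embedding, yields the key relation. For the Boolean sum, $x$ is adjacent to $v$ iff $\ell(x)\dot{+}\ell(v)=1$, and likewise for $x'$; equating traces position by position gives $w_i=w'_{\sigma(i)}\dot{+}c_0$ for all $i$, where $c_0:=\ell(x)\dot{+}\ell(x')$ and $\sigma$ is the orientation of $f$ along the path, whence $w\in\{w',(w')^d,w'\dot{+}\mathbf 1,(w')^d\dot{+}\mathbf 1\}$. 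For the second projection, $x$ (in the earlier component) is adjacent to $v$ iff $\ell(v)=1$, so its trace records exactly the $1$'s of $w$; were the component of $x'$ to follow $X'$, the trace of $x'$ would be constant, contradicting that $w$ carries both letters, so the component of $x'$ precedes $X'$ and records the $1$'s of $w'$, giving $w\in\{w',(w')^d\}$. In both cases every long factor $w$ of $u$ is a transform $\tau(w')$ of a factor $w'$ of $u'$ for some $\tau$ in the group $\Gamma:=\{\mathrm{id},d,c,dc\}$, where $c$ is complementation $v\mapsto v\dot{+}\mathbf 1$ and $d$ is reversal, and for the second projection $\tau\in\{\mathrm{id},d\}$.

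The remaining and most delicate point is to make a single $\tau$ work for all factors at once. For each factor $w$ put $\Gamma_w:=\{\tau\in\Gamma:\tau(w)\in\fac(u')\}$; the second paragraph shows $\Gamma_w\neq\varnothing$ for all long $w$. Each element of $\Gamma$ carries factors of $W$ to factors of $\tau(W)$, and $\fac(u')$ is hereditary, so $w$ a factor of $W$ gives $\Gamma_W\subseteq\Gamma_w$; since $\fac(u)$ is up-directed and its factors extend arbitrarily by uniform recurrence, the family $\{\Gamma_w\}$ is downward directed with nonempty members inside the finite set $\Gamma$, hence has the finite intersection property and some $\tau_0\in\bigcap_w\Gamma_w$. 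Then $\tau_0(\fac(u))\subseteq\fac(u')$, that is $\fac(u)\subseteq\tau_0(\fac(u'))$. Now $\tau_0(\fac(u'))$ is again the age of a uniformly recurrent word, hence a countable J\'onsson poset by Theorem \ref{unif-recurrent}; as $\fac(u)$ is hereditary it is an initial segment of it, and a proper initial segment of a J\'onsson poset has strictly smaller cardinality, impossible since $\fac(u)$ is infinite. Therefore $\fac(u)=\tau_0(\fac(u'))$, so $\fac(u)$ and $\fac(u')$ are equivalent, contradicting the hypothesis and completing the proof. I expect this uniformization to be the main obstacle: the trace argument alone only pins each factor down up to an \emph{a priori} $w$-dependent transform, and it is the directedness of $\fac(u)$ together with the finiteness of $\Gamma$ that forces a global choice, after which the J\'onsson property upgrades the inclusion to equality.
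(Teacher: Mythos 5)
Your proposal is correct and follows essentially the same route as the paper: long induced paths are forced into components by Lemma \ref{lem:longenough}, a probe vertex's adjacency trace recovers each factor up to reversal and complementation, an up-directedness argument uniformizes the transformation, and the J\'onsson property (Theorem \ref{unif-recurrent}) upgrades the resulting inclusion of factor sets to equality. The only difference is packaging: the paper encodes the traces once and for all in the set $\mathcal A_G(u)$ (Lemma \ref{lem:keyword}) and invokes the fact that an ideal contained in a finite union of initial segments lies in one of them, whereas you work factor-by-factor with the group $\Gamma$ and prove that same fact via the directed family $\{\Gamma_w\}$ — your version is, if anything, more explicit about why the probe's image must land in a different component (the degree-three argument), a point the paper passes over with ``necessarily.''
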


Let   $G_{(u, \star)}$ be the lexicographical sum of copies of $P_u$ and $G:= \widehat G_{(u, \star)}$ be the resulting graph on $\NN\times \NN$. We associate to this graph $G$ a  set $\mathcal A_G(u)$ of  words over the alphabet $\{0,1\}$ defined as follows. A word $w:=w_0\cdots w_{n}\cdots  w_{m-1}$ belongs to $\mathcal A_G$ if  there is a path $(j,v_0), \dots,  (j, v_{m-1})$ in $\{j\}\times \NN$ and some $(i, v) \in \NN\times \NN$ with   $j\not =i$  such that  $w_n=1$ if and only if  the pair $\{(j, v_n), (i,v))\}$ is an edge of $G$ for all $n<m$.

Taking account of the definition of $G_{(u, \star)}$ the condition on $w$ amounts to say that either $w_n =  u(v)\star u(v_n)$ for all $n<m$ if $j<i$ or $w_n= u(v_n)\star u(v)$ if $i<j$.

Let $u:= (u(n))_{n\in I}$ be a word on a general alphabet $L$, the index set $I$ being an interval of $\ZZ$. For $i\in L$ set $i\star u:= (i\star u(n))_{n\in I}$. For a set $U$ of words set $i\star U:= \{i\star u: u\in U\}$ and if $X$ is a subset of $L$ set $X\star U:= \bigcup_{i\in X} i\star U$. Similarly define $u\star i$, $U\star i$, $U\star X$.

With these notations we have immediately:
\begin{lemma}\label{lem:keyword} $\mathcal A_G(u)$ is the set union of $L\star \fac (u)$, $\fac(u)\star L$ and their duals.
\end{lemma}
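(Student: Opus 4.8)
The plan is to prove the asserted equality by two inclusions, both obtained by directly unfolding the definition of $\mathcal A_G(u)$; the only genuine content is identifying which words can be read along a path inside a single component, and keeping track of reversals.

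First I would analyse the shape of a witnessing configuration. A path $(j,v_0),\dots,(j,v_{m-1})$ lies inside the component $\{j\}\times\NN$, which is an isomorphic copy of $P_{\infty}$, the path on $\NN$. Every path in $P_{\infty}$ traverses a contiguous interval of $\NN$ monotonically, so either $v_n=v_0+n$ for all $n$ (the increasing case) or $v_n=v_0-n$ for all $n$ (the decreasing case). Hence the label-sequence $(u(v_0),\dots,u(v_{m-1}))$ equals a factor $f:=u\restriction\{v_0,\dots,v_{m-1}\}\in\fac(u)$ in the increasing case, and equals its reverse $f^d$ in the decreasing case.

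Then I would establish the inclusion $\subseteq$ by combining this with the computation recorded just before the lemma: for $w\in\mathcal A_G(u)$ witnessed by the path above and a vertex $(i,v)$ with $i\neq j$, one has $w_n=u(v)\star u(v_n)$ for all $n$ when $j<i$, and $w_n=u(v_n)\star u(v)$ for all $n$ when $i<j$; that is, $w=u(v)\star(u(v_0)\cdots u(v_{m-1}))$ or $w=(u(v_0)\cdots u(v_{m-1}))\star u(v)$. Substituting the two possible shapes of the label-sequence gives four cases. The increasing cases yield $w=u(v)\star f\in L\star\fac(u)$ and $w=f\star u(v)\in\fac(u)\star L$. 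The decreasing cases yield $w=u(v)\star f^d$ and $w=f^d\star u(v)$, which I would rewrite using the letterwise identities $i\star(f^d)=(i\star f)^d$ and $(f^d)\star i=(f\star i)^d$ (valid because $\star$ acts coordinatewise and reversal is an involution on positions) as elements of $(L\star\fac(u))^d$ and $(\fac(u)\star L)^d$. This covers every $w\in\mathcal A_G(u)$.

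Finally I would run the argument backwards for $\supseteq$. Given $i\in L$ and $f\in\fac(u)$, pick an interval $I$ of $\NN$ with $u\restriction I=f$ and a vertex $v$ with $u(v)=i$, which exists since every letter of $L$ occurs in $u$. Placing the fixed vertex $(i,v)$ in a component above, respectively below, the component carrying the path over $I$ realizes both $u(v)\star(\cdot)$ and $(\cdot)\star u(v)$; enumerating $I$ increasingly produces $i\star f$ and $f\star i$, while enumerating it decreasingly produces their reverses. Thus all four sets are contained in $\mathcal A_G(u)$, giving equality. I expect the only step needing care to be this reversal bookkeeping, namely correctly matching decreasing paths with the dualised sets via $i\star(f^d)=(i\star f)^d$; the remainder is a mechanical unfolding of the definitions.
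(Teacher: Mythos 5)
Your proof is correct and matches the paper's intent exactly: the paper offers no proof at all (the lemma is introduced with ``With these notations we have immediately''), and your unfolding---monotone traversal of an interval inside a component, the four cases according to the side of $\star$ and the direction of traversal, and the coordinatewise identities $i\star f^d=(i\star f)^d$ and $f^d\star i=(f\star i)^d$---is precisely the routine verification being left to the reader. The only caveat is the one you flagged yourself: the inclusion $L\star \fac(u)\subseteq \mathcal A_G(u)$ requires every letter of $L$ to occur in $u$, which holds in the paper's application since there $L=\{0,1\}$ and $u$ is non-constant.
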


Specializing to $L=\{0,1\}$, if $\star$ is the Boolean sum then $L\star \fac (u)= \fac(u)\star L= \fac(u)\cup \fac (u {\dot+} {\bf 1})$, while $L\star \fac (u)= \fac(u)$ and $\fac (u)\star L= \fac({\bf 0})\cup \fac ({\bf {1}})$ if $\star$ is the second projection.\\

\noindent {\bf Proof of Proposition \ref {propdistinctages}.} The proof goes in two steps.

\begin{enumerate}
\item If $\age(\widehat G_{(u, \star)}) \subseteq \age(\widehat G_{(u', \star)})$ then $\mathcal A_G(u)\subseteq \mathcal A_G(u')$.  \label{item:step1}
\item If $\mathcal A_G(u)\subseteq \mathcal A_G(u')$ then $\fac (u)$ and $\fac (u')$ are equivalent. \label{item:step2}
\end{enumerate}

The proof of Item (\ref{item:step1}) is easy. Let $w\in \mathcal A_G(u)$. Let $H$ be a path in a component and a vertex $(i, v)$ in $G_{(u, \star)}$ witnessing this fact. Since the component are infinite  this path is included in a path $H'$ long enough so that Lemma \ref{lem:longenough} ensures that
 in any  embedding of $\widehat G_{(u, \star)} {\restriction H'\cup \{(i,v)\}}$ in $\widehat G_{(u', \star)}$ the image of $H'$ is mapped into  a component. Since necessarily $(i, v)$ is mapped into another component, from this embedding we get that $w\in \mathcal A_G(u')$.

The proof of Item (\ref{item:step2}) goes as follows. Suppose that   $\mathcal A_G(u)\subseteq  \mathcal A_G'(u')$. Then, by Lemma \ref{lem:keyword},  $\fac (u)$ is included in the set union of $L\star \fac (u')$, $\fac(u')\star L$ and their dual. Since $\fac(u)$ is an ideal included in  a finite union of initial segments,  it is included in one of them. Hence it is included in a set of the form $\fac(v)$ equivalent to  $\fac(u')$ or equal to $\fac(\bf{0})$ or to  $\fac(\bf{1})$. According to Theorem \ref{unif-recurrent}, since $v$ is uniformly recurrent, $\fac(v)$ is J\'onsson.  Hence $\fac (v)= \fac(u)$. Since $u$ is non constant,  $v$ is distinct from $\bf {0}$ and $\bf{1}$, hence $\fac (v)$ is equivalent to $\fac(u')$. \hfill $\Box$

%
%
%
%

\subsection{Primality and path-minimality}

Let $G:=(V,E)$ be a graph. If $G$ is path-minimal then, by definition,  $G$ is embeddable  in $G_{\restriction V'}$ provided that this restriction contains  finite induced paths of unbounded length. And,  in this case, $G_{\restriction V'}$ is path-minimal too. In some cases, as presented in Theorem \ref{claim1} and \ref{thm-prime-path-minimal},  $G$ is prime; in other cases, e.g. in Theorem \ref{thm-nonprime-path-minimal}, no graph equimorphic to $G$ is prime.

\begin{theorem}\label{claim1} The graphs $\widehat Q_k$, for  $k\geq 2$, are prime.
\end{theorem}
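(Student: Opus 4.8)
The plan is to prove directly that $\widehat Q_k$ is indecomposable; since $\widehat Q_k$ is infinite, by the definitions of the previous section it then suffices to show that its only autonomous sets (modules) are trivial, i.e. that every autonomous $A\subseteq V(\widehat Q_k)$ with $|A|\ge 2$ equals the whole vertex set. The organizing observation exploits the component structure. Each component $X_i$ induces in $\widehat Q_k$ exactly the path on $i+5\ge 5$ vertices, and a path on at least four vertices has only trivial modules (a short check: a nontrivial module of a path must be an interval $[a,b]$, and then a neighbour of an endpoint distinguishes it). Since the intersection of an autonomous set with any vertex subset is autonomous in the induced subgraph, $A\cap X_i$ is a module of that path, whence for every $i$,
$$A\cap X_i\in\{\varnothing,\ \text{a singleton},\ X_i\}.$$
This trichotomy drives the two cases below, and throughout I use that cross-component adjacency depends only on residues: for $x\neq x'$ the vertex $(x',y')$ is adjacent to $(x,y)$ iff $y\not\equiv_k y'$.

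The first case is that no component lies entirely in $A$, so $A$ is a set of at least two singletons in distinct components. Picking $a=(x_1,y_1)$ and $b=(x_2,y_2)$ in $A$ with $x_1\neq x_2$, I would look for a distinguishing vertex inside one of the two components: a position $y'$ in $X_{x_1}$ that is not a path-neighbour of $y_1$ (so $(x_1,y')\not\sim a$) yet satisfies $y'\not\equiv_k y_2$ (so $(x_1,y')\sim b$). Such a vertex is automatically outside $A$ because $A\cap X_{x_1}=\{a\}$, and it contradicts autonomy. This search fails only when every position of $X_{x_1}$ outside the three-element window around $y_1$ is congruent to $y_2\bmod k$, and a counting argument shows this degenerate situation arises only for the central vertex of the smallest component $X_0$ when $k=2$. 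Since $x_1\neq x_2$ cannot both be $0$, the symmetric search in $X_{x_2}$ then succeeds, so this case never yields a module.

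The second case is that some component $X_{x_0}$ is contained in $A$. If $X_{x_0}$ already contains all $k$ residues (which holds as soon as $x_0+5\ge k$), then every vertex $w=(x',y')$ with $x'\neq x_0$ is adjacent to some but not all of $X_{x_0}$ — there is a position of $X_{x_0}$ congruent to $y'$ and one incongruent to it — so autonomy forbids $w\notin A$; hence all other components lie in $A$ and $A=V$. If instead $X_{x_0}$ is too short to see every residue (possible only for $k\ge 6$), autonomy forces every vertex whose residue is one of those \emph{present} in $X_{x_0}$ into $A$; intersecting with any large component $X_{x_1}$ (take $x_1\ge k$, which exists since component lengths are unbounded) gives at least two vertices there, so by the trichotomy $X_{x_1}\subseteq A$, and as $X_{x_1}$ sees all residues we are reduced to the previous subcase. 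Either way $A=V$.

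I expect the \textbf{first case} (only singletons) to be the delicate step: one must produce a distinguishing vertex for \emph{every} pair, and the residue bookkeeping genuinely degenerates for the shortest components and the smallest modulus, where the three-element window around $y_1$ can swallow all positions of the deviating residue. The clean way around it is precisely that two distinct components cannot both be the minimal one $X_0$, so the distinguishing search can always be completed on at least one side; and the unboundedness of the component lengths, which guarantees arbitrarily large components seeing all residues, is what lets the second case close.
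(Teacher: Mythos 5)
Your proof is correct, but it is organized around a different key lemma than the paper's, so a comparison is in order. The paper argues directly about a nontrivial module $M$ in two steps: (i) $M$ meets each component in at most one vertex — if $M$ contained two vertices of some $X_i$, it would first absorb all of $X_i$, then absorb every other component $X_j$ via the vertices $(j,0)$ and $(j,1)$ (each of which is adjacent to exactly one of $(i,0),(i,1)$), forcing $M=V$; and (ii) $M$ is a singleton — given $a\in X_i$, $b\in X_j$ in $M$ with $i>j$, the component $X_i$ has at least six vertices, and a residue computation produces $a'\in X_i$ adjacent to exactly one of $a,b$, hence $a'\in M$, contradicting (i). You instead derive the trichotomy $A\cap X_i\in\{\varnothing,\ \text{singleton},\ X_i\}$ from two general facts (modules restrict to modules of induced subgraphs; paths on at least four vertices are prime) and then split into the cases ``no full component'' / ``some full component''. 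Your frame is structurally cleaner and reuses standard facts, but it costs you in the second case: your absorption needs the residues-present analysis plus the extra sub-case for $k\geq 6$ with short components, where the paper's absorption via the pairs $(j,0),(j,1)$ works uniformly for every $k$. Note also that the paper sidesteps your degenerate case entirely by always searching in the larger of the two components (WLOG $i>j$, so $\vert X_i\vert\geq 6$), instead of using a search-then-fallback.

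One inaccuracy should be fixed, though it does not break the argument: your claim that the degenerate configuration arises ``only \ldots when $k=2$'' is false. It also arises for $k=4$: with $a=(0,2)$ and $b=(x_2,y_2)$, $y_2\equiv_4 0$, the two positions $0$ and $4$ of $X_0$ outside the window are both congruent to $y_2$ (since $4\equiv_4 0$), so the search in $X_0$ fails there too. The correct characterization is $x_1=0$, $y_1=2$, $k\in\{2,4\}$ and $y_2\equiv_k 0$. Your proof survives because the fallback step only uses the true part of the claim — that degeneracy forces the component in question to be $X_0$ — so with $x_1=0$ and $x_2\neq 0$ the symmetric search in $X_{x_2}$ indeed cannot degenerate and succeeds.
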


\begin{proof}
Let $M$ be a nontrivial module in $\widehat Q_k$.
\begin{itemize}

\item  $M$ meets each  component in at most one vertex.

Indeed, suppose that $M$ contains at least two vertices, say $a$ and $b$, of some component $X_i$ of $G$. Then, it contains $X_i$. Indeed, first, it contains the vertices  of $X_i$ exterior to the path joining $a$ and $b$; since $X_i:= \{(i,j): j<i+5\}$ it contains the vertices on that path. Next, we prove that $M = X$. For that we prove that $M$ contains $X_k$ for all $k\neq i$. Let $a := ( i, 0 )$, $b := ( i, 1 )$, $c := ( k, 0 )$, $d := ( k, 1 )$. By construction $d\sim b$ and $d\sim a$. Hence by the same token, $c,d \in M$. Since $M$ is a module containing $a$ and $b$ it contains $c$. Similarly $c\nsim a$ and $c\sim b$. Hence by the same token, $d\in M$. Since $M$ contains two elements of $X_k$ it contains all of it. Thus $M =X$. Contradicting the fact that $M$ is nontrivial.

\item $M$ is a singleton.

Indeed, suppose that $M$ contains at least two elements $a:= (i, n)$ and $b:= (j, m)$. According to the previous item, $i\not =j$. Without loss of generality we may suppose that $i>j$, so that $\vert X_i\vert \geq 6$. We claim that there is some $a':= (i, n')$ distinct of $a$ such that $a'$ is not linked to $a$ and $b$ in the same way. Since $M$ is a module, this implies that $a$ belongs to $M$ and this contradicts the previous item. In order to prove our claim, set $v:=1$ if $n\not \equiv_k m$ and $v=0$ otherwise.  Suppose that $v= 0$, that is $a\not \sim b$. Since $\vert X_i\vert \geq 6$, one of the sets  $\{n+1, n+2, n+3\}$ and  $\{n-1, n-2, n-3\}$ is included in $X_i$.  Suppose that $\{n+1, n+2, n+3\}\subseteq X_i$. If $k>2$  then  for $n':=n+2$, $a'\in X_i$ and $a'\not\sim a$ but $a'\sim b$. If $k=2$ then the same conclusion holds for $n':=  n+3$. Suppose that $v=1$, that is $a\sim b$.   If $n+1\equiv_k m$, set $n':= n+1$. If not, then either $n+2\not \equiv_k m$ in which case set $n':= n+2$, or $n+2 \equiv_k m$, in which case $n+3\not \equiv_k m$ and we may set $n':=  n+3$.
\end{itemize}
\end{proof}

\begin{remark} $\widehat Q_k$, $k\geq 2$, is not minimal prime in the sense of Definition \ref{minimality-pou-zag}.
\end{remark}
Indeed, with similar arguments as in the proof of Theorem \ref{claim1}, the restriction of $\widehat Q_k$ to $\NN\times \{0,1,2,3,4\}$ is also prime and hence $\widehat Q_k$ is not minimal prime in the sense of Definition \ref{minimality-pou-zag}.

\begin{theorem}\label{thm-prime-path-minimal} There are $2^{\aleph_0}$ path-minimal prime graphs whose ages are pairwise incomparable and wqo.
\end{theorem}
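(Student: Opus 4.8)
The plan is to reuse the family produced for Theorem~\ref{thm:uncoutable ages} and to upgrade it with a primality lemma modelled on Theorem~\ref{claim1}. Fix $\star$ to be the Boolean sum on $L:=\{0,1\}$ and let $(u_\alpha)_{\alpha<2^{\aleph_0}}$ be $2^{\aleph_0}$ non-constant uniformly recurrent words (for instance Sturmian words of pairwise distinct slopes) whose factor sets $\fac(u_\alpha)$ are pairwise nonequivalent in the sense of Definition~\ref{def:equivalence}. By Proposition~\ref{prop:pathminimal} each $\widehat Q_{(u_\alpha,\star)}$ is path-minimal with wqo age, and by Proposition~\ref{propdistinctages} the ages $\age(\widehat Q_{(u_\alpha,\star)})=\age(\widehat G_{(u_\alpha,\star)})$ are pairwise incomparable for inclusion. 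Thus everything is already in place except primality, and it suffices to prove the following lemma: \emph{for the Boolean sum $\star$ and any non-constant uniformly recurrent $u$, if the defining intervals $I_n$ are chosen so that each component is a path on at least four vertices displaying both letters of $u$, then $\widehat Q_{(u,\star)}$ is prime.} Such a choice is harmless, since path-minimality only requires the lengths $|I_n|$ to be unbounded, while the uniform recurrence of $u$ guarantees that every sufficiently long window of $u$ contains both letters.

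To prove the lemma I would run a module argument paralleling the two bullets of Theorem~\ref{claim1}. Write $x=(i,n)$ for a vertex, with label $\ell(x)=u(n)$, and recall that for $i\neq j$ one has $(i,n)\sim(j,m)$ if and only if $u(n)\neq u(m)$. Let $M$ be a nontrivial module. \textbf{Step 1: $M$ meets each component in at most one vertex.} Suppose instead that $M$ contains two vertices of a component $X_i$. For $w\in X_i\setminus M$ the module property forces $w$ to be adjacent to all or to none of $M$, hence to all or to none of $M\cap X_i$; since the induced graph on $X_i$ is a pure path, $M\cap X_i$ is a module of that path. As $|X_i|\ge 4$ the path is prime, so $M\cap X_i=X_i$, i.e.\ $X_i\subseteq M$. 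Now take any $z\notin M$; it lies in another component and, under the Boolean sum, is adjacent to exactly those vertices of $X_i$ carrying the label opposite to $\ell(z)$. Because $X_i$ displays both letters, this is a proper nonempty subset of $X_i\subseteq M$, so $z$ is adjacent to some but not all of $M$, contradicting that $M$ is a module. Hence $M=V$, contradicting nontriviality.

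\textbf{Step 2: $M$ is a singleton.} Suppose $M$ meets two distinct components, say $x=(i,n)$ and $y=(j,m)$ with $i\neq j$. First $\ell(x)=\ell(y)$: otherwise every vertex of any third component is adjacent to exactly one of $x,y$, and by Step~1 the set $M$ is a transversal, so each long component contains vertices of both labels lying outside $M$; such a vertex separates $x$ and $y$, a contradiction. Next, since $X_i$ is long and displays both letters, choose $x'=(i,q)\in X_i$ with $q$ not a path-neighbour of $n$ and $u(q)\neq u(n)$. Then $x'\not\sim x$ (same component, non-neighbour) while $x'\sim y$ (cross-component, opposite label to $\ell(y)=\ell(x)=u(n)$), so $x'$ separates $x$ and $y$; the module property forces $x'\in M$, giving two vertices of $M$ in $X_i$ and contradicting Step~1. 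Therefore $M$ is a singleton and $\widehat Q_{(u,\star)}$ is prime, which completes the proof. The main obstacle is precisely this primality lemma: primality is not inherited from the infinite graph $\widehat G_{(u,\star)}$ nor preserved under equimorphy, so it must be verified directly for the finite-component object $\widehat Q_{(u,\star)}$; the crux is to arrange the components as prime paths displaying both letters and then to exploit the symmetric, label-based cross-component adjacency of the Boolean sum to produce the separating vertices, while Step~1 keeps these vertices available outside $M$.
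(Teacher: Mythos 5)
Your high-level plan is the same as the paper's: Boolean sum, Proposition \ref{prop:pathminimal} for path-minimality and wqo, Proposition \ref{propdistinctages} for pairwise incomparability of the ages, and a two-step module argument in the style of Theorem \ref{claim1} for primality. The gap is in your primality lemma: the hypothesis ``each component is a path on at least four vertices displaying both letters'' is too weak, and in fact the lemma is false as stated. Take $u$ to be the periodic word $0101\dots$ (uniformly recurrent and non-constant) and choose two of the intervals $I_n$ so that the corresponding components carry the label pattern $10101$, the remaining intervals being arbitrary windows on at least four vertices of unbounded length. Let $x$ and $y$ be the middle vertices of these two components; both are labelled $1$. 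Within such a component, the two path-neighbours of the middle vertex are labelled $0$, hence are adjacent both to $x$ (path edges) and to $y$ (opposite labels across components), while the two remaining vertices are labelled $1$ and lie at path-distance $2$, hence are adjacent to neither $x$ nor $y$; and every vertex of a third component sees $x$ and $y$ identically, because cross-component adjacency under the Boolean sum depends only on labels and $\ell(x)=\ell(y)$. Thus $\{x,y\}$ is a nontrivial module and the graph is not prime. This is exactly where your Step 2 breaks: the vertex $x'=(i,q)$ with $u(q)\neq u(n)$ and $q$ not a path-neighbour of $n$ need not exist under your hypothesis.

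The paper avoids this by a quantitative choice which is also the natural repair of your argument: with $l(u)$ the maximal length of a constant run of $u$, it takes every component of length at least $\varphi(u)$ (equal to $2(l(u)+1)+1$ in general), by restricting $\widehat G_{(u,\star)}$ to $\{(n,m)\colon m\leq n+\varphi(u)\}$. Then, for any vertex $(i,n)$, on at least one side of $n$ its component contains $l(u)+1$ consecutive positions at path-distance at least $2$ from $n$; since no constant run of $u$ is longer than $l(u)$, both letters occur among these positions, so the separating vertex $x'$ exists and Step 2 goes through. With this strengthening your proof is correct and essentially coincides with the paper's: your Step 1 (invoking primality of finite paths on at least four vertices) matches the paper's first bullet, and your preliminary reduction to $\ell(x)=\ell(y)$ via third components is a harmless variant of the paper's case distinction on the value of $\ell(a)\star\ell(b)$.
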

\begin{proof}Let $\star$ be the Boolean sum. Let $u$ be a uniformly recurrent and non constant word, let  $G_{(u, \star)}$  be the labelled graph on $\NN\times \NN$ associated to $u$, let $\ell$ be the labelling  and  $G$ be the restriction of  $\widehat G_{(u, \star)}$ to the set $X:= \{ (n, m): m\leq n+ \varphi (u)\}$. Containing only paths of finite unbounded  length, this graph is path-minimal. We show in an almost identical  way as in Theorem \ref{claim1} that this graph is prime.

Let $M$ be a nontrivial module in $G$.

\begin{itemize}
\item  $M$ meets each  component in at most one vertex.
Indeed, suppose that $M$ contains at least  two vertices, say $a$ and $b$,  of some component $X_i$ of $\widehat Q_k$. Then it contains $X_i$. Indeed, note first that it contains  the vertices in $X_i$ exterior to the path joining $a$ to $b$, next note that  it contains the vertices of $X_i$  on that path (since $X_i:= \{(i,j): j<i+5\}$). Next we prove that $M=X$. Let $c:= (j, k)\not \in X_i$, that is $j\not =i$. Since $\vert X_i\vert\geq  \varphi(u)$, $X_i$ contains $a:= (i, n)$ and $b:= (i, m)$ with $u(n)\not =u(m)$, hence $\ell(a)\dot{+}\ell(c)= u(n)\dot{+}u(k)\not =u(m)\dot{+}u(k)=\ell(b)\dot{+}\ell(c)$. That means that $c$ is not linked to $a$ in the same way as to $b$. Since $M$ is a module, it contains $c$. Thus $M=X$. A contradiction.

\item $M$ is a singleton.

Indeed, suppose that $M$ contains at least two elements $a:= (i, n)$ and $b:= (j,m)$. According to the previous item, $i\not =j$. We claim that there is some $c:= (i, k)$ distinct of $a$ such that $c$ is not linked to $a$ and $b$ in the same way. Since $M$ is a module, this implies that $c$ belongs to $M$ and this contradicts the previous item. In order to prove our claim, set $v:= \ell(a) \star \ell (b)$. Suppose that $v= 0$, that is $a$ not linked to $b$. Since $\vert X_i\vert \geq \varphi(u)$ we may find $k$ with $\vert k-i\vert \geq 2$ and $\ell(c)\not = \ell (a)$, proving our claim in that case. We do similarly if $v=1$.
\end{itemize}
\end{proof}

\begin{theorem}\label{thm-nonprime-path-minimal}There are $2^{\aleph_0}$ path-minimal non prime graphs whose ages are pairwise incomparable and wqo and such that any graph equimorphic to these graphs is also nonprime.
\end{theorem}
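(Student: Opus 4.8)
The plan is to realize the $2^{\aleph_0}$ required graphs as the family $\widehat Q_{(u, \star)}$ in which $\star$ is the \emph{second projection} ($i\star j=j$) and $u$ ranges over $2^{\aleph_0}$ uniformly recurrent non-constant words whose factor sets are pairwise nonequivalent in the sense of Definition \ref{def:equivalence}. Proposition \ref{prop:pathminimal} already supplies that each such graph is path-minimal with wqo age equal to $\age(\widehat G_{(u, \star)})$, and Proposition \ref{propdistinctages}, which is stated precisely for the second projection, gives that these ages are pairwise incomparable for inclusion. Thus the only genuinely new content is the final clause, that no graph equimorphic to $\widehat Q_{(u, \star)}$ is prime; I would in fact prove the stronger statement that \emph{every} graph equimorphic to $\widehat Q_{(u, \star)}$ carries a nontrivial module.

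First I would isolate the elementary fact that drives the argument: with $\star$ the second projection, for $i<j$ the vertices $(i,x)$ and $(j,y)$ of $\widehat G_{(u, \star)}$ are adjacent if and only if $u(y)=1$, so the adjacency of a vertex to another vertex lying in a \emph{strictly earlier} component depends only on its own label. Hence, if $W$ is any set of vertices all lying in components of index $<j$, then $(j,y)$ is joined to all of $W$ if $u(y)=1$ and to none of $W$ if $u(y)=0$; in particular such a $W$ is seen uniformly by every vertex sitting in a later component.

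Now let $G'$ be equimorphic to $\widehat Q_{(u, \star)}$ and set $H:=\widehat G_{(u, \star)}$. Since $\widehat Q_{(u, \star)}$ and $H$ share the same age (Proposition \ref{prop:pathminimal}) and $\widehat Q_{(u, \star)}\le H$, we get $G'\le \widehat Q_{(u, \star)}\le H$, so $G'$ is isomorphic to an induced subgraph $H_{\restriction A}$ with $\age(H_{\restriction A})=\age(H)$; write $A_i:=A\cap(\{i\}\times\NN)$. The key step is that $A$ meets infinitely many components. For this I would use that $\age(H)$ contains arbitrarily large cliques: any collection of label-$1$ vertices lying in distinct components is a clique, since for any two of them the one in the later component has label $1$, and each component contains infinitely many label-$1$ vertices because $u$ is non-constant. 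On the other hand, inside one component the graph is a path, hence triangle-free, so an induced clique uses at most two vertices per component. Consequently an induced $K_m$ in $H_{\restriction A}$ forces $A$ to meet at least $\lceil m/2\rceil$ components, and the unboundedness of cliques forces $A$ to occupy infinitely many components.

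Finally, let $i_0<i_1$ be the two smallest indices with $A_{i_0},A_{i_1}\neq\varnothing$ and put $M:=A_{i_0}\cup A_{i_1}$. Every vertex of $A\setminus M$ lies in a component of index $>i_1$ (as $i_0,i_1$ are the two smallest occupied indices), hence by the observation above is joined to all or to none of $M$, so $M$ is autonomous in $H_{\restriction A}$. It is nontrivial: $\lvert M\rvert\geq 2$ because two components are occupied, and $M\neq A$ because a third component is occupied. Transporting $M$ across the isomorphism $H_{\restriction A}\cong G'$ yields a nontrivial module of $G'$, so $G'$ is not prime; taking $G'=\widehat Q_{(u, \star)}$ shows these graphs are themselves non-prime. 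I expect the main obstacle to be exactly this equimorphism-invariance, namely handling an \emph{arbitrary} $G'$ rather than the canonical graph; the device that removes it is to pull $G'$ back into the lexicographic sum and to certify ``infinitely many occupied components'' by the clique-size invariant, after which the autonomous set $A_{i_0}\cup A_{i_1}$ is forced by the second-projection adjacency rule.
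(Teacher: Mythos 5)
Your proposal is correct and takes essentially the same route as the paper: the paper also works with $\star$ the second projection, cites Propositions \ref{prop:pathminimal} and \ref{propdistinctages} for path-minimality, wqo and incomparability of the ages, observes (exactly your second-projection remark) that unions of initial components $M_k:=\bigcup_{j\le k}X_j$ are modules of $\widehat G_{(u,\star)}$, and destroys primality of any equimorphic graph by taking the trace of such a module on an embedded copy --- your $M=A_{i_0}\cup A_{i_1}$ is precisely such a trace. The only real difference is that you carefully justify why the trace is nontrivial (via the at-most-two-clique-vertices-per-component count and unbounded cliques in the age), a point the paper passes over with ``clearly \dots if $k$ is large enough''.
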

\begin{proof}
Let $\star$ to be the second projection, i.e.,    $i\star j=j$ for all $i,j\in \{0,1\}$. Consider the graph  $\widehat G_{(u, \star)}$ for a uniformly recurrent word $u$. For $i\in \NN$, set $X_i:= \{i\}\times P_u$. Let $k\in \NN$. We claim that $M_k:=\cup_{j=0}^{k} X_i$ is a module in $\widehat G_{(u, \star)}$. Indeed, if $x\not \in M_k$, then either $\ell(x)=0$, in which case $x$ is not adjacent to any vertex of $M_k$, or $\ell(x)=1$, in which case $x$ is adjacent to every vertex of $M$. Now let $G$ be a graph equimorphic to $\widehat Q_{(u, \star)}$. Clearly the trace of $M_k$ on $G$ is a module in $G$ and if is $k$ large enough,  $M_k$ is nontrivial proving that $G$ is not prime.  According to Proposition \ref {prop:pathminimal} this graph is path-minimal. According to  Proposition \ref {propdistinctages} (applied to the second projection) there are  $2^{\aleph_0}$ graphs of that form with pairwise incomparable ages.
\end{proof}
\begin{remark} We get the same conclusion by replacing the  second projection by the first. Indeed, in that case, the sets $\NN \setminus M_k$ are   modules of  $\widehat G_{(u, \star)}$.
\end{remark}

\section{Proof of Theorem \ref{thm:incomparability graph}}

Let $G:=(V,E)$ be a graph and let $F\subseteq V$. We define a binary relation $\equiv_F$ on $V\setminus F$ as follows. For $v,v'\in V\setminus F$ set
\[v\equiv_F v' \mbox{ if for all } f\in F \quad (\{f,v\}\in E \Leftrightarrow \{f,v'\}\in E).\]
It is easily seen that $\equiv_F$ defines an equivalence relation on $V\setminus F$. Let $\tau_G(F)$ be the partition of $V\setminus F$ induced by $\equiv_F$. Each equivalence class $B$ is completely determined by a subset of $F$, namely, the subset of $f\in F$ such that $\{f,v\}\in E$ for every $v\in B$. Hence, the number of equivalence classes of $\equiv_F$ is at most $2^{|F|}$.

\begin{lemma}\label{lem1}Let $G:=(V,E)$ be a graph, $F\subseteq V$ and $B\in \tau_G(F)$. Then the equivalence $\equiv_B$ induces a partition of $F$ in at most two subsets $A$ and $A'$. In particular, $G_{\restriction A\cup B}$ is the direct or complete sum of $G_{\restriction A}$ and $G_{\restriction B}$.
\end{lemma}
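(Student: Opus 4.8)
The plan is to exploit the single defining feature of $B$ as a class of $\equiv_F$, namely that the bipartite adjacency between $F$ and $B$ is \emph{constant along} $B$. First I would note that since $B\subseteq V\setminus F$ we have $F\cap B=\varnothing$, so $F\subseteq V\setminus B$ and the relation $\equiv_B$ does restrict to $F$. The crucial observation is this: because $B$ is one $\equiv_F$-class, any two $b,b'\in B$ satisfy $b\equiv_F b'$, hence for every $f\in F$ we have $\{f,b\}\in E\Leftrightarrow\{f,b'\}\in E$. Consequently each $f\in F$ is either adjacent to \emph{every} vertex of $B$ or to \emph{no} vertex of $B$; there is no intermediate behaviour.

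Next I would define $A:=\{f\in F : \{f,b\}\in E \text{ for all } b\in B\}$ and $A':=F\setminus A$. By the observation of the previous paragraph, $A'$ consists exactly of those $f\in F$ adjacent to no vertex of $B$ (this is the point where block-constancy is used a second time, to rule out a finer splitting). Two elements of $A$ are then $\equiv_B$-equivalent, since both are adjacent to all of $B$; likewise any two elements of $A'$ are $\equiv_B$-equivalent. Finally, as $B\neq\varnothing$ (equivalence classes are nonempty), picking any $b\in B$ separates an element of $A$ from an element of $A'$, so these two sets are distinct $\equiv_B$-classes. Hence $\{A,A'\}$ is precisely the partition of $F$ induced by $\equiv_B$, and it has at most two nonempty blocks.

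For the final assertion I would read off the structure of $G_{\restriction A\cup B}$ directly from the definition of $A$: every vertex of $A$ is adjacent to every vertex of $B$, so this induced graph consists of all edges inside $A$, all edges inside $B$, and the complete bipartite graph between $A$ and $B$; that is, it is the complete sum of $G_{\restriction A}$ and $G_{\restriction B}$. Symmetrically, $G_{\restriction A'\cup B}$ has no edge joining $A'$ to $B$ and is therefore the direct sum of $G_{\restriction A'}$ and $G_{\restriction B}$. Writing the chosen block generically as $A$, exactly one of these two cases occurs, which yields the stated ``direct or complete sum'' dichotomy.

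I do not expect a genuine obstacle here: the statement is essentially an unwinding of the two equivalence relations against each other. The only point demanding care is verifying that $A$ and $A'$ are the actual $\equiv_B$-classes rather than merely a refinement of the partition, and this is exactly what the block-constancy of the $F$--$B$ adjacency guarantees.
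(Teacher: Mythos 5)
Your proof is correct and follows essentially the same route as the paper's: both define $A$ by adjacency to $B$, use the block-constancy of the $F$--$B$ adjacency (coming from $B$ being a class of $\equiv_F$) to conclude that $A$ and its complement are exactly the $\equiv_B$-classes inside $F$, and then read off the complete-sum/direct-sum structure of $G_{\restriction A\cup B}$. Your write-up is in fact somewhat more explicit than the paper's (which leaves the final dichotomy as ``a consequence''), but there is no difference in method.
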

\begin{proof}Let $A$ be the set of $u\in F$ such that $\{u,x\}\in E$ for some $x\in B$. Let $u'\in A$. Since $B$ is an equivalence class of $\equiv_F$ we have $\{u',x'\}\in E$ for all $x'\in B$. Furthermore, if $u'\in F$ we have $u\equiv_B u'$ if $u'\in A$. Hence $A$ is an element of the partition induced by $\equiv_B$ and its complement in $F$ is also an element of the partition induced by $\equiv_B$. This proves the first assumption of the lemma. The rest is a consequence.
\end{proof}

Paths in the incomparability graph of a poset have a special form given the following lemma which is an improvement of I.2.2 Lemme, p.5 of \cite{pouzet78}.

\begin{lemma}\label{lem:inducedpath} Let $x,y$ be two vertices of a poset $P$ with $x<y$.  If $x_0, \dots, x_n$ is an induced path from $x$ to $y$ in the incomparability graph of $P$ then   $x_i< x_j$ for all $j\geq i+2$.
\end{lemma}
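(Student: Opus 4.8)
The plan is to read off, from the hypothesis that $x_0,\dots,x_n$ is an \emph{induced} path in the incomparability graph of $P$, exactly which pairs are comparable and which are not: consecutive vertices $x_i,x_{i+1}$ are incomparable (they form an edge of $\inc(P)$), while any two vertices $x_i,x_j$ with $|i-j|\ge 2$ are comparable (they are non-adjacent in an induced path). First I would dispose of the degenerate cases: since $x<y$ is strict we have $x_0\neq x_n$, and if $n=1$ the endpoints $x_0,x_1$ would be adjacent in $\inc(P)$, hence incomparable, contradicting $x<y$; so $n\ge 2$. The aim is then to show that every comparability between vertices at index-distance at least $2$ is oriented "forward", i.e.\ $x_i<x_j$ whenever $j\ge i+2$.

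The heart of the argument is a local consistency step for distance-$2$ comparabilities, which I expect to be the main obstacle to get exactly right. I would prove: if $x_i<x_{i+2}$ (with $i+3\le n$), then $x_{i+1}<x_{i+3}$. This rests on two short transitivity arguments that exploit the incomparability of consecutive vertices to rule out the wrong orientation. First, $x_i$ and $x_{i+3}$ are comparable; were $x_{i+3}<x_i$, then $x_{i+3}<x_i<x_{i+2}$ would give $x_{i+3}<x_{i+2}$, contradicting $x_{i+2}\parallel x_{i+3}$; hence $x_i<x_{i+3}$. Second, $x_{i+1}$ and $x_{i+3}$ are comparable; were $x_{i+3}<x_{i+1}$, then $x_i<x_{i+3}<x_{i+1}$ would give $x_i<x_{i+1}$, contradicting $x_i\parallel x_{i+1}$; hence $x_{i+1}<x_{i+3}$. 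Since the incomparability relation is unchanged on passing to the dual $P^d$, applying this statement in $P^d$ gives the reverse implication $x_{i+2}<x_i\Rightarrow x_{i+3}<x_{i+1}$. Consequently the orientation of the distance-$2$ comparability $\{x_i,x_{i+2}\}$ is the same for every admissible $i$: either $x_i<x_{i+2}$ for all such $i$, or $x_{i+2}<x_i$ for all such $i$.

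Next I would propagate a fixed forward orientation to all longer comparabilities: assuming $x_i<x_{i+2}$ for every $i$, I claim $x_i<x_j$ whenever $j\ge i+2$, by induction on $j-i$. The cases $j=i+2$ and $j=i+3$ are exactly the previous step. For the inductive step with $j=i+k+1$ and $k\ge 3$: the vertices $x_i$ and $x_{i+k+1}$ are comparable, and if $x_{i+k+1}<x_i$ then, using $x_i<x_{i+k-1}$ from the induction hypothesis, transitivity would give $x_{i+k+1}<x_{i+k-1}$, contradicting the forward orientation of the distance-$2$ pair $\{x_{i+k-1},x_{i+k+1}\}$; hence $x_i<x_{i+k+1}$.

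Finally I would pin down the orientation using $x=x_0<x_n=y$. If the distance-$2$ comparabilities were all oriented backward, the dual of the propagation step (carried out in $P^d$) would yield $x_n<x_0$, that is $y<x$, contradicting $x<y$ (this is where $n\ge 2$ is used). Therefore the forward orientation holds, and the propagation step delivers $x_i<x_j$ for all $j\ge i+2$, as required. Once the local consistency step is established the remainder is a routine induction, so I expect essentially all the difficulty to be concentrated there.
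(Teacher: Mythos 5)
Your proof is correct, but it is organized quite differently from the paper's. The paper proceeds by induction on the length $n$ of the whole path: for $n\ge 3$ it shows $x_0<x_{n-1}$ and $x_1<x_n$ (each by the same one-line transitivity-plus-incomparability trick you use), then invokes the induction hypothesis on the two subpaths $x_0,\dots,x_{n-1}$ and $x_1,\dots,x_n$, which together with the given $x_0<x_n$ cover all pairs $j\ge i+2$; note that the endpoint hypothesis $x<y$ is re-used at every level of that induction to orient the shortened paths. You instead argue local-to-global: first a consistency step showing that the orientation of the distance-$2$ comparabilities cannot flip along the path (forward at $i$ forces forward at $i+1$, and dually in $P^d$), then a propagation by induction on $j-i$ under a fixed forward orientation, and only at the very end a single use of $x<y$ to exclude the backward alternative. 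Your route is longer, but it isolates a symmetric intermediate fact that the paper's proof does not state: for \emph{any} induced path in the incomparability graph of a poset, with no hypothesis on the endpoints, all comparabilities among its vertices are oriented coherently in one of the two directions (the path is a ``zigzag'' of the order). The paper's endpoint-peeling induction buys brevity; yours buys this slightly more general statement, at the price of the three-stage structure and the dualization bookkeeping. Both rest on the same elementary core: consecutive vertices are incomparable, non-consecutive ones are comparable, and transitivity rules out the wrong orientations.
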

\begin{proof}
We proceed by induction on $n$. If $n\leq 2$ the property holds trivially. Suppose $n\geq 3$. Taking  out $x_0$, induction applies to $x_1, \dots, x_n$. Similarly, taking out $x_n$, induction applies to $x_0, \dots, x_{n-1}$. Since the path from $x_0$ to $x_n$ is induced, $x_0$ is comparable to every $x_j$ with $j\geq 2$ and $x_n$ is comparable to every $x_j$ with $j<n-1$. In particular, since $n\geq 3$, $x_0$ is comparable to $x_{n-1}$. Necessarily, $x_0< x_{n-1}$. Otherwise, $x_{n-1}<x_0$  and then by transitivity $x_{n-1}<x_{n} $ which is impossible since $\{x_{n-1}, x_{n}\} $  is an edge of the incomparability graph. Thus, we may apply induction to the path  $x_0, \dots,  x_{n-1}$ and  get $x_0<x_j$ for every $j>2$. Similarly, we get $x_1<x_n$ and via the induction applied to the path from $x_1$ to $x_n$, we obtain $x_j< x_n$ for $j<n-1$. The stated result follows.
\end{proof}

If $P:=(V,\leq)$ is a poset, a subset $X$ of $V$ is \emph{order convex} or \emph{convex} if for all $x,y\in X$, $[x,y]:=\{z : x\leq z\leq y\}\subseteq X$.

\begin{lemma}\label{lem3}Let $P:=(V,\leq)$ be a poset. If a convex subset $C$ of $V$ contains the end  vertices  of an induced path of length $n+3$, then it contains an induced subpath of length $n$.
\end{lemma}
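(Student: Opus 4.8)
The plan is to read off the desired subpath from the given induced path by combining Lemma~\ref{lem:inducedpath} with the convexity of $C$. Write the induced path as $x_0,\dots,x_{n+3}$, so that $x_0$ and $x_{n+3}$ are the two end vertices lying in $C$. Since its length $n+3$ is at least $2$, the end vertices are non-adjacent in the incomparability graph, hence comparable in $P$. Both convexity and the incomparability graph are invariant under passing to the dual poset $P^{d}$, so I may assume without loss of generality that $x_0<x_{n+3}$, and then apply Lemma~\ref{lem:inducedpath} to this path.

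Lemma~\ref{lem:inducedpath} gives $x_i<x_j$ whenever $j\ge i+2$. Taking $i=0$ yields $x_0<x_j$ for every $j\ge 2$, and taking $j=n+3$ yields $x_i<x_{n+3}$ for every $i\le n+1$. Consequently every inner vertex $x_j$ with $2\le j\le n+1$ satisfies $x_0<x_j<x_{n+3}$, that is $x_j\in[x_0,x_{n+3}]$; since $x_0,x_{n+3}\in C$ and $C$ is convex, all of these vertices lie in $C$. Because they occupy a block of consecutive indices of an induced path, the subgraph they span is again an induced path — the restriction of an induced path to an interval of its indices is a path — so $C$ contains this induced subpath.

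The main obstacle is the boundary bookkeeping at the two ends. The vertices $x_1$ and $x_{n+2}$ adjacent to the end vertices need not lie in $C$: here $x_1$ is incomparable to $x_0$ and $x_{n+2}$ is incomparable to $x_{n+3}$, so neither is sandwiched between elements of $C$ that we control. Moreover the end vertices $x_0$ and $x_{n+3}$, being comparable to all the inner vertices, are isolated from the inner subpath in the incomparability graph and so cannot be appended to lengthen it. Thus the only part one can guarantee inside $C$ is the inner consecutive block, and the crux is to verify that, after discarding the two vertices adjacent to the end vertices, this surviving block is long enough to furnish the required induced subpath of length $n$; this is exactly where the surplus ``$+3$'' in the hypothesis is consumed.
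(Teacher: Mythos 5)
Your argument follows the paper's own proof essentially verbatim: observe that the two ends are comparable, apply Lemma~\ref{lem:inducedpath}, and use convexity to place the inner vertices $x_2,\dots,x_{n+1}$ inside $C$. All of that is correct. The gap is exactly the step you defer at the end and never carry out: the surviving block $x_2,\dots,x_{n+1}$ consists of $n$ vertices, so it is an induced path of length $n-1$, not $n$. Passing from the original path to the guaranteed block discards four vertices --- $x_1$ and $x_{n+2}$ because they may escape $C$, and $x_0$, $x_{n+3}$ because, as you yourself note, they are comparable to every inner vertex and hence isolated in the restriction of the incomparability graph to $C$ --- so the length drops by four, not three. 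The surplus ``$+3$'' is therefore not consumed; it is insufficient, and the count you flag as ``the crux'' cannot be made to come out to $n$.

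No bookkeeping can repair this, because the statement as printed is off by one. For $n\geq 1$, consider the poset on $\{x_0,\dots,x_{n+3}\}$ in which $x_i<x_j$ if and only if $j\geq i+2$: its incomparability graph is exactly the induced path $x_0,x_1,\dots,x_{n+3}$, and the convex set $C:=[x_0,x_{n+3}]=\{x_0,x_2,x_3,\dots,x_{n+1},x_{n+3}\}$ contains the two ends but no induced path of length $n$, since $x_0$ and $x_{n+3}$ are comparable to every other element of $C$. To be fair, the paper's proof commits the same overcount (it asserts that $x_2,\dots,x_{n+1}$ is a path of length $n$), so your proposal faithfully reproduces a flaw that is already in the source. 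The usable statement is: if $C$ contains the ends of an induced path of length $n+4$, then $C$ contains an induced subpath of length $n$ (namely $x_2,\dots,x_{n+2}$, which has $n+1$ vertices). This corrected form is exactly what the paper invokes downstream: in the proof of Lemma~\ref{lem5} the two same-coloured vertices are taken at distance $n+4$, and the extracted vertices $x_{i+2},\dots,x_{i+n+2}$ then genuinely form a path of length $n$; note that Lemma~\ref{lem5} cites ``the proof of Lemma~\ref{lem3}'' rather than its statement.
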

\begin{proof}Let $x_0,...,x_{n+3}$ be an induced path of length $n+3$ with $x_0,x_{n+3}\in C$. It follows from Lemma \ref{lem:inducedpath} that $x_0\leq x_i\leq x_{n+1}$ if $2\leq i\leq n+3$. Hence the path $x_2,...,x_{n+1}$ is in $C$.
\end{proof}

Let $P:=(V,\leq)$ be a poset. For $v\in V$, denote by $D(v):=\{u\in V : u<v\}$, $U(v):=\{u\in V : u>v\}$ and $Inc(v):=\{u\in V: u \mbox{ is incomparable to } v\}$.

\begin{lemma}\label{lem4}If $G$ is the incomparability graph of a poset $P=(V,\leq)$, then for each subset $F$ of $V$, each element of $\tau_G(F)$ is convex.
\end{lemma}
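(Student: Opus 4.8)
The plan is to read off convexity directly from the definition of $\equiv_F$, using nothing but transitivity of $\leq$ and the fact that the members of a class share the same adjacency pattern on $F$. So I fix a class $B\in\tau_G(F)$, pick $x,y\in B$ and an element $z$ with $x\leq z\leq y$, and aim to show $z\in B$; that is, $z\in V\setminus F$ together with $z\equiv_F x$. Since $x,y\in B$ I already know that for every $f\in F$ the pair $\{f,x\}$ is an edge exactly when $\{f,y\}$ is, and because $G=Inc(P)$ this means: $f$ is incomparable to $x$ if and only if it is incomparable to $y$. I will therefore fix an arbitrary $f\in F$ and compare its position relative to $z$ with its position relative to $x$ (equivalently $y$), splitting into the case where $f$ is incomparable to $x,y$ and the case where it is comparable to them.

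The clean case is when $f\parallel x$ and $f\parallel y$; here I claim $f\parallel z$ as well. Indeed, if $f\leq z$ then $f\leq z\leq y$ gives $f\leq y$, contradicting $f\parallel y$; symmetrically $z\leq f$ gives $x\leq z\leq f$, hence $x\leq f$, contradicting $f\parallel x$. Thus $\{f,z\}\in E$, matching $\{f,x\}\in E$. This is precisely the assertion that each set $Inc(f)=\{v:v\parallel f\}$ is order convex, so the class consisting of the vertices incomparable to \emph{every} $f\in F$ is an intersection of convex sets and is convex outright; moreover for that class $z\notin F$ comes for free, since an $f\in F$ lying in $[x,y]$ would be comparable to $x$, contradicting $f\parallel x$.

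The delicate case, which I expect to be the main obstacle, is when $f$ is comparable to both $x$ and $y$: I must then show $f$ is comparable to $z$, and simultaneously that no such $f$ lies inside $[x,y]$. Enumerating the possibilities, $f<x$ yields $f<x\leq z$ and $f>y$ yields $f>y\geq z$, so in both of these transitivity gives $f\sim z$, while the configuration $f<x,\ f>y$ is excluded by $x\leq y$. The only dangerous configuration is $x<f<y$, where $f$ sits strictly inside the interval and could be incomparable to an intermediate $z$ (so that $z$ would fall in a different class); this configuration also threatens the requirement $z\notin F$. The core of the proof is thus to rule out $x<f<y$ for $f\in F$ and $x,y$ in a common class, and for this the bare definition of $\equiv_F$ is not enough: I plan to exploit the order structure of $Inc(P)$, in particular the monotonicity of induced paths recorded in Lemma~\ref{lem:inducedpath}, which controls how the comparabilities of $F$ interleave with a class. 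Once this is secured, combining the two cases over all $f\in F$ gives $\{f,z\}\in E\Leftrightarrow\{f,x\}\in E$ for every $f$, hence $z\equiv_F x$ and $z\notin F$, so $z\in B$ and $B$ is convex.
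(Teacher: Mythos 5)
Your case analysis is correct as far as it goes, and you have isolated the decisive point accurately; but the proposal stops exactly there. The cases where $f$ is incomparable to both $x$ and $y$ (transitivity forces $f\|z$), and where $f<x$ or $y<f$ (transitivity forces comparability with $z$), are complete. The configuration $x<f<y$, however, is not handled at all: you only announce a plan to exclude it via Lemma~\ref{lem:inducedpath}, with no argument. And this configuration is not merely ``dangerous,'' it is the whole lemma: if some $f\in F$ satisfies $x<f<y$ with $x,y$ in a block $B$, then convexity already fails, because $f\in[x,y]$ while $B\subseteq V\setminus F$ forces $f\notin B$. So your proof reduces Lemma~\ref{lem4} precisely to the statement you did not prove.

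That statement cannot be proved, because the lemma as written (for an arbitrary $F\subseteq V$) is false. Take the diamond poset $V=\{a,b,c,d\}$ with $a<b<d$, $a<c<d$ and $b\|c$, and take $F=\{b\}$. The only edge of $G=Inc(P)$ is $\{b,c\}$, so $\{a,d\}$ is a block of $\tau_G(F)$; but $[a,d]=V\not\subseteq\{a,d\}$. Here $x=a$, $f=b$, $y=d$, $z=c$ realize exactly your forbidden configuration, and Lemma~\ref{lem:inducedpath} cannot exclude it (one can even pad this example so that $F$ is the vertex set of an arbitrarily long induced path of $Inc(P)$). You should know that the paper's own proof trips at the same spot: it asserts that the $\equiv_F$-class of $v$ equals $\bigcap_{y\in F}V_v(y)$ with $V_v(y)\in\{D(y),U(y),Inc(y)\}$, but $\equiv_F$ records only comparability versus incomparability with each $f\in F$, not its direction, so two members of one class may lie on opposite sides of some $f$ (as $a$ and $d$ do with respect to $b$); the intersection $\bigcap_{y\in F}V_v(y)$ is a convex subset of the class, in general a proper one. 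What is true, and what the application in Lemma~\ref{lem6} actually needs, is the statement your (and the paper's) argument does prove: the finer partition of $V\setminus F$ whose blocks are the nonempty sets $\bigcap_{f\in F}W_f$, with each $W_f$ chosen among $D(f)$, $U(f)$, $Inc(f)$, consists of convex sets; it refines $\tau_G(F)$, determines adjacency to $F$, and has at most $3^{|F|}$ blocks instead of $2^{|F|}$, which is harmless for the counting in Lemma~\ref{lem6}.
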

\begin{proof}Observe first that for each $u\in V$, the sets $U(v)$ and $D(v)$ and $Inc(v)$ are convex. Next, observe that the intersection of convex subsets is convex. To conclude that an element of $\tau_G(F)$ is convex it is enough to prove that it is the intersection of convex sets. For that, let $v\not \in F$. For each $y\in F$ set $V_v(y)$ to be $Inc(y)$ if $v\|y$, $U(y)$ if $y<v$, or $D(y)$ if $v<y$. We claim that the equivalence class containing $v$ is $B:=\bigcap_{y\in F}V_v(y)$. Indeed, by definition $v\in B$ and $B$ is convex since intersection of convex sets. Next, if $v'\equiv_F v$, then $v'\in B$. Finally, if $v'\not \in B$, then for some $y\in F$, $v'\not \in V_v(y)$ and thus $v'\nequiv_F v$.
\end{proof}

\begin{lemma}\label{lem5}Let $n,k$ be nonnegative integers and $f_k(n)=k\cdot (n+4)$. If the incomparability graph of a poset $P$ contains an induced path $C$ of length at least $f_k(n)$, then for every covering of $P$ by at most $k$ convex sets, one convex set contains a subpath  of $C$ of length  $n$.
\end{lemma}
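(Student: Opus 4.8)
The plan is to combine a pigeonhole count on the vertices of $C$ with the convexity input already packaged in Lemma \ref{lem3}. Write the path as $C = x_0, x_1, \dots, x_m$ with $m \ge f_k(n) = k(n+4)$, so that $C$ has at least $k(n+4)+1$ vertices, and fix a covering $V = K_1 \cup \cdots \cup K_k$ by at most $k$ convex sets. Since a covering may overlap, I would first assign to each vertex $x_t$ a single index $c(t) \in \{1,\dots,k\}$ with $x_t \in K_{c(t)}$.

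First I would apply the pigeonhole principle to the map $c$. At least $k(n+4)+1$ vertices are distributed among at most $k$ classes, so some convex set, say $K$, receives at least $\lceil (k(n+4)+1)/k\rceil = n+5$ of the vertices $x_t$. Let $a$ be the least and $b$ the greatest index $t$ among those counted with $x_t \in K$. Since $K$ contains at least $n+5$ vertices with distinct indices lying in $[a,b]$, the spread satisfies $b - a \ge n+4$, and both endpoints $x_a, x_b$ lie in $K$.

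Next comes the step using order-convexity. The restriction $x_a, x_{a+1}, \dots, x_b$ of $C$ is again an induced path (a consecutive stretch of an induced path induces a path) of length $b-a$, and its two endpoints $x_a, x_b$ both lie in the convex set $K$. Here I would invoke Lemma \ref{lem3} with its parameter taken to be $(b-a)-3$ rather than $n$: the endpoints of an induced path of length $((b-a)-3)+3 = b-a$ lie in $K$, so $K$ contains an induced subpath of length $(b-a)-3$. As $b-a \ge n+4$, this length is at least $n+1 \ge n$; truncating to $n+1$ consecutive vertices of that subpath yields a subpath of $C$ of length exactly $n$ all of whose vertices lie in $K$, which is the desired conclusion.

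The point that needs care — and which governs the precise constant in $f_k(n)$ — is the interface with Lemma \ref{lem3}. The pigeonhole argument only supplies \emph{two} vertices $x_a, x_b$ of a single convex set that are far apart in index; it does \emph{not} give an entire length-$(n+3)$ stretch sitting inside that set, so one cannot apply Lemma \ref{lem3} with parameter $n$ directly to the segment $x_a,\dots,x_{a+n+3}$ (whose far endpoint need not lie in $K$). The device of running Lemma \ref{lem3} with the shifted parameter $(b-a)-3$, so that only the two endpoints we actually control must be convex-enclosed, is exactly what makes the count and the convexity lemma mesh; the surplus $+4$ (rather than $+3$) in $f_k(n)$ provides the slack ensuring the extracted subpath has length at least $n$.
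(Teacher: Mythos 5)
Your proof is correct and follows essentially the same route as the paper: a pigeonhole argument produces two vertices of $C$ lying in a single convex set at path-distance at least $n+4$, and the convexity lemma (Lemma \ref{lem3}, resting on Lemma \ref{lem:inducedpath}) then captures the stretch between them. The only cosmetic differences are that the paper pigeonholes on $k+1$ evenly spaced vertices of the path (its Claim) rather than on all vertices with extremal indices, and that it invokes the \emph{proof} of Lemma \ref{lem3} directly on a length-$(n+4)$ segment instead of its statement with a shifted parameter, as you do.
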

\begin{proof}We start with the following claim.\\
\begin{claim} \label{claim7} If the vertices of a path of length $m$ are coloured with $k$ colors, then there are two vertices of the same color at distance at least $\lfloor \frac{m}{k}\rfloor $.
\end{claim}
\noindent {\bf Proof of  Claim \ref{claim7}} Let $v_0,v_1,...,v_m$ be a path, $l:=\lfloor \frac{m}{k}\rfloor $ and $w_0:=v_0,w_1:=v_l,w_2:=v_{2l},...,w_k:=v_{kl}$. The colors of these $k+1$ vertices cannot be all different, so two vertices $w_i,w_j$ with $i<j$ have the same color. Their distance is at least $l$.$\Box$\\
If we have a path of length at least $f_k(n)=k\cdot (n+4)$, there are two vertices $x,y$ on this path at distance at least $n+4$. The induced path is $x=x_i,x_{i+1},x_{i+2},...,x_{i+n+2}, x_{i+n+3},x_{i+n+4}$. According to the proof of Lemma \ref{lem3}, all vertices $x_{i+2},...,x_{i+n+2}$ are in the convex hull of $\{x_i,x_{i+n+4}\}$.
\end{proof}

\begin{lemma}\label{lem6}Let $P:=(V,\leq)$ be a poset. If $Inc(P)$ contains induced finite paths of unbounded  length, then for every integer $n$ there is an induced path $C$ of length $n$ and a subset $V'$ of $V\setminus V(C)$ such that:
\begin{enumerate}[$(1)$]
  \item $V'$ is convex and contains induced paths of $Inc(P)$ of unbounded finite lengths.
  \item Either all vertices $x\in V(C)$ and $y\in V'$ are adjacent, or nonadjacent in $Inc(P)$.
\end{enumerate}
\end{lemma}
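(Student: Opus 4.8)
The plan is to produce the pair $(C,V')$ by fixing \emph{one} long induced path $R$ at the outset to play the role of the set $F$ in Lemma \ref{lem1}, extracting from the blocks of $\tau_G(F)$ a single block $V'$ that still carries arbitrarily long paths, and then locating inside $R$ a monochromatic subpath $C$ whose relation to $V'$ is uniform.

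First I would fix $n$ and choose an induced path $R=x_0\cdots x_N$ in $G:=Inc(P)$ with $N:=3n+6$ (possible by hypothesis), and put $F:=V(R)$. By Lemma \ref{lem4} every block of $\tau_G(F)$ is convex, and there are at most $2^{|F|}$ of them; adjoining the singletons $\{f\}$, $f\in F$, gives a covering of $V$ by $k\le 2^{N+1}+(N+1)$ convex sets. Now I run a pigeonhole argument through Lemma \ref{lem5}: since $G$ contains induced paths of unbounded length, for every $m$ there is one of length at least $f_k(m)=k(m+4)$, so Lemma \ref{lem5} yields a convex set of the covering containing a subpath of length $m$. As the covering is finite, one fixed convex set, call it $V'$, contains induced paths of unbounded finite length; since it carries paths of positive length it is not one of the singletons, so $V'=B$ for a block $B\in\tau_G(F)$. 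This already gives condition $(1)$: $V'$ is convex, disjoint from $F$, and carries arbitrarily long paths.

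It remains to find $C$ meeting condition $(2)$, and this is the technical heart. By Lemma \ref{lem1}, $F$ splits under $\equiv_{B}$ into $A$ (vertices of $F$ adjacent to all of $B$) and $A'$ (vertices adjacent to none of $B$); for any $y\in B$ one has $A=Inc(y)\cap F$. I then analyze the combinatorics of $A$ along $R$. Orienting $R$ so that $x_0<x_N$, Lemma \ref{lem:inducedpath} forces $x_i<x_j$ for all $j\ge i+2$, so in $P_{\restriction F}$ two vertices $x_i,x_j$ are comparable exactly when $|i-j|\ge 2$. Since $Inc(y)$ is convex (as in the proof of Lemma \ref{lem4}), $A$ is a convex subset of this order; writing $a$ and $b$ for the extreme indices occurring in $A$, convexity applied to $a\prec c\prec b$ (that is $a+2\le c\le b-2$) forces $[a+2,b-2]\subseteq A$, so $A$ is the interval of indices $[a,b]$ with possibly $a+1$ and/or $b-1$ removed. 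Consequently the three contiguous runs $[0,a-1]$, $[a+2,b-2]$, $[b+1,N]$ are monochromatic (the first and last in $A'$, the middle in $A$), pairwise disjoint, and cover all but the at most four indices $a,a+1,b-1,b$. Hence one of them has at least $(N-3)/3=n+1$ vertices, and the corresponding stretch of $R$ is an induced subpath of length at least $n$ lying entirely in $A$ or entirely in $A'$. Truncating it to length exactly $n$ gives $C$: it lies in $F$, hence is disjoint from $V'=B\subseteq V\setminus F$, and by the definition of $A$ and $A'$ every vertex of $C$ is adjacent to every vertex of $V'$ (if $C\subseteq A$) or to none of them (if $C\subseteq A'$), which is condition $(2)$. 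The degenerate cases ($A=\varnothing$, $A=F$, or $b-a$ small) only make one run longer and are routine.

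The step I expect to be the main obstacle is the structural description of $A$: one must convert ``$A$ is convex in the incomparability order'' into ``$A$ is an interval of $R$ up to two exceptional points.'' This is exactly where Lemma \ref{lem:inducedpath} is indispensable, for without the almost-linear ordering of $V(R)$ that it supplies, the neighbourhood trace $Inc(y)\cap F$ of an outside vertex could scatter across $R$ and no long monochromatic subpath would be guaranteed.
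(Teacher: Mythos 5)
Your proof is correct, and its first half coincides with the paper's: fix a long induced path $F$, note that the blocks of $\tau_G(F)$ are convex (Lemma \ref{lem4}), and combine Lemma \ref{lem5} with a pigeonhole over the finitely many blocks to obtain one block $V'$ carrying induced paths of unbounded length. (The paper covers $V\setminus F$ by the blocks alone, after observing that deleting the finite set $F$ preserves unboundedness of paths; your device of adjoining the singletons $\{f\}$, $f\in F$, lets you quote Lemma \ref{lem5} verbatim as a covering of all of $P$ --- an immaterial but slightly cleaner variant.) Where you genuinely diverge is in extracting $C$. The paper simply applies Lemma \ref{lem5} a second time, with $k=2$, to the covering of $F$ by the two classes $A, A'$ supplied by Lemma \ref{lem1}, having taken $F$ of length at least $2(n+4)$; this is shorter, but it tacitly uses that $A$ and $A'$ are (traces on $F$ of) convex sets, a point the paper leaves to the reader (it follows from Lemma \ref{lem4} applied to the equivalence $\equiv_{V'}$). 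You instead prove a sharper structural statement: writing $A=Inc(y)\cap F$ for any $y\in V'$, using Lemma \ref{lem:inducedpath} to order $V(F)$ almost linearly, and invoking convexity of $Inc(y)$, you show that $A$ is an interval of indices up to at most the two points $a+1$ and $b-1$, whence one of three monochromatic runs has at least $n+1$ vertices once $|F|=3n+7$. The cost is a slightly longer initial path ($3n+6$ versus $2(n+4)$) and some case analysis; the gain is that the convexity issue is explicit and you obtain a reusable description of how any outside block meets an induced path. One sentence worth adding: ``orienting $R$ so that $x_0<x_N$'' presupposes that the endpoints of $R$ are comparable, which holds because $R$ is induced of length at least $2$, so $x_0$ and $x_N$ are nonadjacent in $Inc(P)$ and hence comparable in $P$.
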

\begin{proof}Since $Inc(P)$ contains induced paths of unbounded length, it contains a path $C'$ of length $n'\geq 2\cdot (n+4)$. Let $F:=V(C')$. Since $Inc(P)$ contains induced paths of unbounded lengths and $F$ is finite, the same holds for $V\setminus F$. The partition of  $V\setminus F$ in equivalence classes with respect to $\equiv_F$ has at most $2^{n'+1}$ equivalence classes. According to Lemma \ref{lem4} each block is convex. According to Lemma \ref{lem5} some of these equivalence classes contain paths of unbounded length. According to Lemma \ref{lem1}, the equivalence $\equiv_{V'}$ on $F$ induces a partition in at most two sets. According to Lemma \ref{lem5}, one of these two sets contains a subpath $C$ of $C'$ of length $n$. With $C',V'$ we get the required conclusion.
\end{proof}
We prove Theorem \ref{thm:incomparability graph} as follows. With Lemma \ref{lem6} we build a sequence $(C'_n,V'_n)_{n\in \NN}$ where $C'_n$ is an induced path of length $n$ of $V'_{n-1}$, with $V'_{-1}=V$ and such that $V'_n$ is convex and all its elements are either all adjacent or nonadjacent to $C'_n$. This does not prevent the situation that at some stage $C'_n$ is nonadjacent to all elements of $V'_n$ and some other $C'_n$  adjacent to all. But at least either there will be infinitely many in the first case, in which case we obtain a direct sum of paths of arbitrarily large lengths, or not, in which case we obtain a complete sum.

\section{Proof of Theorem \ref{thm:isometric}}
The result is a consequence of the following lemma.
\begin{lemma}The supremum of the diameters of the connected components of a graph $G$ is infinite if and only if $G$ embeds a direct sum of isometric paths of arbitrarily large lengths.
\end{lemma}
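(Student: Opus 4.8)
The plan is to prove the two implications separately, reducing everything to the construction of suitably separated geodesics. For the easy direction ($\Leftarrow$), suppose $G$ embeds a direct sum of isometric paths of unbounded length, and let $P$ be one such path, of length $n$, with end vertices $a,b$. Since $P$ is an isometric subgraph of $G$, we have $d_G(a,b)=d_P(a,b)=n$, so $a$ and $b$ lie in a common connected component whose diameter is at least $n$. As the lengths are unbounded, the supremum of the diameters of the components of $G$ is infinite.

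For the converse ($\Rightarrow$) I would first record the standard remark that any shortest path is an isometric \emph{induced} path: if $w_0,\dots,w_L$ is a geodesic then $d_G(w_i,w_j)=|i-j|$ for its vertices (a subpath of a geodesic is again a geodesic), and an edge $\{w_i,w_j\}$ would force $|i-j|\le 1$, so there are no chords. Consequently the whole problem reduces to producing isometric paths $P_1,P_2,\dots$ of unbounded length that are pairwise at graphic distance at least $2$: then no edge of $G$ joins two of them and they are vertex disjoint, so the graph induced by $G$ on $\bigcup_k V(P_k)$ is exactly the direct sum $\oplus_k P_k$, each summand being isometric in $G$.

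The heart of the argument is the following greedy step, which I would isolate as a claim: if the supremum of the diameters of the components of $G$ is infinite, then for every finite set $F\subseteq V(G)$ and every integer $\ell$ there is an isometric path of length at least $\ell$ disjoint from the ball $B_G(F,1)$, equivalently at distance at least $2$ from $F$. To prove it, note first that the hypothesis yields geodesics of arbitrarily large length (a shortest path between two sufficiently distant vertices of a component of large diameter). Fix such a geodesic $\gamma=w_0,\dots,w_L$ with $L$ large. Then $\gamma$ meets $B_G(F,1)$ in only few vertices: at most $|F|$ of its vertices lie in $F$, and for each $f\in F$ any two indices $i,j$ with $w_i,w_j$ adjacent to $f$ satisfy $|i-j|=d_G(w_i,w_j)\le 2$, so there are at most three such indices; hence $\gamma$ has at most $4|F|$ vertices in $B_G(F,1)$. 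Deleting these vertices splits $\gamma$ into at most $4|F|+1$ subpaths, each a sub-geodesic disjoint from $B_G(F,1)$, and for $L$ large enough the longest of them has length at least $\ell$ by pigeonhole. This proves the claim.

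Finally I would build the required family by induction: having produced pairwise $2$-separated isometric paths $P_1,\dots,P_{k-1}$, I apply the claim with $F=\bigcup_{i<k}V(P_i)$ (a finite set) and $\ell=k$, obtaining an isometric path $P_k$ of length at least $k$ lying at distance at least $2$ from $F$, hence from each previous $P_i$. The resulting family gives the direct sum $\oplus_k P_k$ of isometric paths of unbounded length embedded in $G$, as required. The main obstacle is precisely this greedy step in the absence of local finiteness: the ball $B_G(F,1)$ may be infinite, so one cannot simply delete it from $G$ and hope that long geodesics survive. The counting is therefore carried out \emph{along a single geodesic} $\gamma$, where the isometry of $\gamma$ forces every vertex of $F$ to be adjacent to only a bounded arc of $\gamma$; this is exactly what keeps the estimate finite and makes the argument work for arbitrary (possibly non locally finite) graphs.
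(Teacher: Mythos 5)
Your proof is correct, and its key step is genuinely different from the paper's. Both arguments share the same overall strategy -- greedily build isometric (hence induced) paths that are pairwise at graphic distance at least $2$, so that the induced subgraph on their union is the desired direct sum -- but they secure the greedy step in different ways. The paper first splits into two cases: if every connected component has finite supremum of isometric-path lengths, it simply picks a maximum-length path in each of infinitely many components of unbounded diameter; otherwise it works inside a single component of infinite diameter and chooses each new geodesic entirely outside a large ball $B_G(X,m)$ around the union $X$ of the previous paths, using the facts that a set of finite diameter has balls of finite diameter and that the complement of such a ball therefore still has infinite diameter (so a geodesic of length $k<m$ starting in that complement can never touch $B_G(X,1)$). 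You instead give a single, unified argument with no case distinction: take any sufficiently long geodesic $\gamma$ wherever it lives, observe that isometry forces each vertex of the finite set $F$ to have its closed neighbourhood meet $\gamma$ in at most four vertices occupying a window of consecutive indices, so $\gamma$ meets $B_G(F,1)$ in at most $4|F|$ vertices, and extract a long clean sub-geodesic by pigeonhole. Both routes correctly avoid any local-finiteness assumption (the ball $B_G(F,1)$ may well be infinite); the paper neutralizes this through diameter estimates, you through counting along the geodesic itself. Your version buys uniformity and an explicit quantitative bound, and it exploits only the finite cardinality of $F$; the paper's version is slightly more conceptual and its ball argument would even tolerate an infinite $X$ of finite diameter, though that extra generality is never needed. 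One stylistic note: like you, the paper treats the direct sum as the induced graph on the union of the selected paths, each path being isometric in $G$; the converse implication, which you spell out, is left implicit there.
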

\begin{proof}Let us associate to each connected component $C$ of $G$ the supremum of the lengths of the isometric paths in $C$. If all these suprema are finite we are done (in each connected component we can select a path of maximal length). Otherwise, for some connected component the supremum is infinite. We can assume that $ G $ is the graph induced on this connected component. Let $ V $ be its set of vertices. If $ X $ is a subset of $ V $ it has a diameter $ \delta (X) $.  If this diameter is finite, the diameter of $B_G(X, 1)$ is also finite (at most $ \delta (X) + 2 $), and hence $B_G(X, m) $ is finite for any every $ m \geq 1 $. The diameter of $V \setminus B_G(X, m)$ is infinite (indeed if $U, W$ are two subsets the diameter of $U \cup W$ is at most $ \delta(U) + \delta(W) + d(U, W) $). Choose an integer $ m $ arbitrarily.
Take $ k <m $ and two elements $x, y$ in $V \setminus B_G(X, m)$ at distance $ k$ and a path $R$ of length $ k $. If a vertex of this path is in $B_G(X, 1) $ then $ x $ is in $B_G(X, k + 1) $, which is impossible if $ k \leq m-1 $. Therefore, the isometric path is in the complement of $B_G (X, 1) $ and therefore it has no edge in common with the other paths already constructed.
\end{proof}

\acknowledgements
\label{sec:ack} Results of this paper have been presented by the second author to ALGOS 2020 Algebras, Graphs and Ordered Sets, Loria, Nancy, August 26th to 28th 2020. The authors are pleased to thank the organizers for their warm welcome.\\
The authors are grateful to two  anonymous referees for their careful reading of the manuscript and their many  comments and suggestions.

\bibliographystyle{abbrvnat}
\bibliography{long-paths}

\begin{thebibliography}{30}
\providecommand{\natexlab}[1]{#1}
\providecommand{\url}[1]{\texttt{#1}}
\expandafter\ifx\csname urlstyle\endcsname\relax
  \providecommand{\doi}[1]{doi: #1}\else
  \providecommand{\doi}{doi: \begingroup \urlstyle{rm}\Url}\fi

\bibitem[Allouche and Shallit(2003)]{allouche}
J.-P. Allouche and J.~Shallit.
\newblock \emph{Automatic sequences}.
\newblock Cambridge University Press, Cambridge, 2003.
\newblock ISBN 0-521-82332-3.
\newblock \doi{10.1017/CBO9780511546563}.
\newblock Theory, applications, generalizations.

\bibitem[Assous and Pouzet(2018)]{assous-pouzet}
R.~Assous and M.~Pouzet.
\newblock J\'{o}nsson posets.
\newblock \emph{Algebra Universalis}, 79\penalty0 (3):\penalty0 Paper No. 74,
  26, 2018.
\newblock \doi{10.1007/s00012-018-0551-7}.

\bibitem[Berth\'{e} and Rigo(2010)]{berthe}
V.~Berth\'{e} and M.~Rigo, editors.
\newblock \emph{Combinatorics, automata and number theory}, volume 135 of
  \emph{Encyclopedia of Mathematics and its Applications}.
\newblock Cambridge University Press, Cambridge, 2010.
\newblock ISBN 978-0-521-51597-9.
\newblock \doi{10.1017/CBO9780511777653}.

\bibitem[Buckley and Harary(1988)]{buckley-harary}
F.~Buckley and F.~Harary.
\newblock On longest induced paths in graphs.
\newblock \emph{Chinese Quart. J. Math.}, 3\penalty0 (3):\penalty0 61--65,
  1988.

\bibitem[Ehrenfeucht et~al.(1999)Ehrenfeucht, Harju, and
  Rozenberg]{ehrenfeucht}
A.~Ehrenfeucht, T.~Harju, and G.~Rozenberg.
\newblock \emph{The theory of 2-structures}.
\newblock World Scientific Publishing Co., Inc., River Edge, NJ, 1999.
\newblock ISBN 981-02-4042-2.
\newblock \doi{10.1142/4197}.
\newblock A framework for decomposition and transformation of graphs.

\bibitem[Fogg(2002)]{pytheas}
N.~P. Fogg.
\newblock \emph{Substitutions in dynamics, arithmetics and combinatorics},
  volume 1794 of \emph{Lecture Notes in Mathematics}.
\newblock Springer-Verlag, Berlin, 2002.
\newblock ISBN 3-540-44141-7.
\newblock \doi{10.1007/b13861}.
\newblock Edited by V. Berth\'{e}, S. Ferenczi, C. Mauduit and A. Siegel.

\bibitem[Fra\"{\i}ss\'{e}(1984)]{fraisse84}
R.~Fra\"{\i}ss\'{e}.
\newblock L'intervalle en th\'{e}orie des relations; ses
  g\'{e}n\'{e}ralisations; filtre intervallaire et cl\^{o}ture d'une relation.
\newblock In \emph{Orders: description and roles ({L}'{A}rbresle, 1982)},
  volume~99 of \emph{North-Holland Math. Stud.}, pages 313--341. North-Holland,
  Amsterdam, 1984.

\bibitem[Fra\"{\i}ss\'{e}(2000)]{fraissetr}
R.~Fra\"{\i}ss\'{e}.
\newblock \emph{Theory of relations}, volume 145 of \emph{Studies in Logic and
  the Foundations of Mathematics}.
\newblock North-Holland Publishing Co., Amsterdam, revised edition, 2000.
\newblock ISBN 0-444-50542-3.
\newblock With an appendix by Norbert Sauer.

\bibitem[Gallai(2001)]{gallai}
T.~Gallai.
\newblock A translation of {T}. {G}allai's paper: ``{T}ransitiv orientierbare
  {G}raphen'' [{A}cta {M}ath. {A}cad. {S}ci. {H}ungar. {\bf 18} (1967), 25--66;
  {MR}0221974 (36 \#5026)].
\newblock In \emph{Perfect graphs}, Wiley-Intersci. Ser. Discrete Math. Optim.,
  pages 25--66. Wiley, Chichester, 2001.
\newblock Translated from the German and with a foreword by Fr\'{e}d\'{e}ric
  Maffray and Myriam Preissmann.

\bibitem[Higman(1952)]{higman}
G.~Higman.
\newblock Ordering by divisibility in abstract algebras.
\newblock \emph{Proc. London Math. Soc. (3)}, 2:\penalty0 326--336, 1952.
\newblock \doi{10.1112/plms/s3-2.1.326}.

\bibitem[Ille(2005)]{ille}
P.~Ille.
\newblock A characterization of the indecomposable and infinite graphs.
\newblock \emph{Glob. J. Pure Appl. Math.}, 1\penalty0 (3):\penalty0 272--285,
  2005.

\bibitem[Kearnes and Oman(2013)]{kearnes}
K.~A. Kearnes and G.~Oman.
\newblock J\'{o}nsson posets and unary {J}\'{o}nsson algebras.
\newblock \emph{Algebra Universalis}, 69\penalty0 (2):\penalty0 101--112, 2013.
\newblock \doi{10.1007/s00012-013-0222-7}.

\bibitem[Kelly(1985)]{kelly85}
D.~Kelly.
\newblock Comparability graphs.
\newblock In \emph{Graphs and order ({B}anff, {A}lta., 1984)}, volume 147 of
  \emph{NATO Adv. Sci. Inst. Ser. C: Math. Phys. Sci.}, pages 3--40. Reidel,
  Dordrecht, 1985.

\bibitem[K\H{o}nig(1927)]{konig}
D.~K\H{o}nig.
\newblock \"{U}ber eine schlussweise aus dem endlichen ins unendliche.
\newblock \emph{Acta Sci. Math. (Szeged)}, 3:\penalty0 121--130, 1927.

\bibitem[Laflamme et~al.(2014)Laflamme, Pouzet, Sauer, and
  Zaguia]{laflamme-pouzet-sauer-zaguia}
C.~Laflamme, M.~Pouzet, N.~Sauer, and I.~Zaguia.
\newblock Pairs of orthogonal countable ordinals.
\newblock \emph{Discrete Math.}, 335:\penalty0 35--44, 2014.
\newblock \doi{10.1016/j.disc.2014.07.005}.

\bibitem[Lothaire(2002)]{lothaire}
M.~Lothaire.
\newblock \emph{Algebraic combinatorics on words}, volume~90 of
  \emph{Encyclopedia of Mathematics and its Applications}.
\newblock Cambridge University Press, Cambridge, 2002.
\newblock ISBN 0-521-81220-8.
\newblock \doi{10.1017/CBO9781107326019}.
\newblock A collective work by Jean Berstel, Dominique Perrin, Patrice Seebold,
  Julien Cassaigne, Aldo De Luca, Steffano Varricchio, Alain Lascoux, Bernard
  Leclerc, Jean-Yves Thibon, Veronique Bruyere, Christiane Frougny, Filippo
  Mignosi, Antonio Restivo, Christophe Reutenauer, Dominique Foata, Guo-Niu
  Han, Jacques Desarmenien, Volker Diekert, Tero Harju, Juhani Karhumaki and
  Wojciech Plandowski, With a preface by Berstel and Perrin.

\bibitem[Milner and Pouzet(1992)]{milner-pouzet}
E.~C. Milner and M.~Pouzet.
\newblock Antichain decompositions of a partially ordered set.
\newblock In \emph{Sets, graphs and numbers ({B}udapest, 1991)}, volume~60 of
  \emph{Colloq. Math. Soc. J\'{a}nos Bolyai}, pages 469--498. North-Holland,
  Amsterdam, 1992.

\bibitem[Morse and Hedlund(1940)]{morse-hedlund}
M.~Morse and G.~A. Hedlund.
\newblock Symbolic dynamics {II}. {S}turmian trajectories.
\newblock \emph{Amer. J. Math.}, 62:\penalty0 1--42, 1940.
\newblock \doi{10.2307/2371431}.

\bibitem[Oudrar(2015)]{oudrar}
D.~Oudrar.
\newblock \emph{Sur l'\'enum\'eration de structures discr\`etes: une approche
  par la th\'eorie des relations.}
\newblock PhD thesis, Universit\'e d'Alger USTHB \`a Bab Ezzouar, 2015.
\newblock URL \url{ArXiv:1604.05839}.

\bibitem[Polat(1998)]{polat}
N.~Polat.
\newblock Graphs without isometric rays and invariant subgraph properties. {I}.
\newblock \emph{J. Graph Theory}, 27\penalty0 (2):\penalty0 99--109, 1998.
\newblock \doi{10.1002/(SICI)1097-0118(199802)27:2<99::AID-JGT5>3.0.CO;2-A}.

\bibitem[Pouzet(1972)]{pouzet72}
M.~Pouzet.
\newblock Un bel ordre d'abritement et ses rapports avec les bornes d'une
  multirelation.
\newblock \emph{C. R. Acad. Sci. Paris S\'{e}r. A-B}, 274:\penalty0
  A1677--A1680, 1972.

\bibitem[Pouzet(1978{\natexlab{a}})]{pouzet78}
M.~Pouzet.
\newblock Ensemble ordonn\'{e} universel recouvert par deux chaines.
\newblock \emph{J. Combin. Theory Ser. B}, 25\penalty0 (1):\penalty0 1--25,
  1978{\natexlab{a}}.
\newblock \doi{10.1016/0097-3165(78)90026-2}.

\bibitem[Pouzet(1978{\natexlab{b}})]{pouzetetat}
M.~Pouzet.
\newblock Sur la th\'eorie des relations.
\newblock 1978{\natexlab{b}}.
\newblock Th\`ese de doctorat d'\'Etat.

\bibitem[Pouzet(1979)]{pouzet79}
M.~Pouzet.
\newblock Relation minimale pour son \^{a}ge.
\newblock \emph{Z. Math. Logik Grundlagen Math.}, 25\penalty0 (4):\penalty0
  315--344, 1979.
\newblock \doi{10.1002/malq.19790251905}.

\bibitem[Pouzet and Sauer(2006)]{pouzet-sauer}
M.~Pouzet and N.~Sauer.
\newblock From well-quasi-ordered sets to better-quasi-ordered sets.
\newblock \emph{Electron. J. Combin.}, 13\penalty0 (1):\penalty0 Research Paper
  101, 27, 2006.
\newblock URL
  \url{http://www.combinatorics.org/Volume_13/Abstracts/v13i1r101.html}.

\bibitem[Pouzet and Zaguia(2009)]{pouzet-zaguia}
M.~Pouzet and I.~Zaguia.
\newblock On minimal prime graphs and posets.
\newblock \emph{Order}, 26\penalty0 (4):\penalty0 357--375, 2009.
\newblock \doi{10.1007/s11083-009-9131-y}.

\bibitem[Sobrani(1992)]{sobranithesis}
M.~Sobrani.
\newblock \emph{Structure d'ordre de la collection des \^ages de relations}.
\newblock PhD thesis, Universit\'e Claude-Bernard, Lyon, 1992.

\bibitem[Sobrani(2002)]{sobranietat}
M.~Sobrani.
\newblock Sur les \^ages de relations et quelques aspects homologiques des
  constructions d+m.
\newblock \emph{Th\`ese de doctorat d'\'etat, Universit\'e S.M.Ben
  Abdallah-Fez, Fez}, 2002.

\bibitem[Sumner(1973)]{sumner}
D.~P. Sumner.
\newblock Graphs indecomposable with respect to the {$X$}-join.
\newblock \emph{Discrete Math.}, 6:\penalty0 281--298, 1973.
\newblock \doi{10.1016/0012-365X(73)90100-3}.

\bibitem[Watkins(1986)]{watkins}
M.~E. Watkins.
\newblock Infinite paths that contain only shortest paths.
\newblock \emph{J. Combin. Theory Ser. B}, 41\penalty0 (3):\penalty0 341--355,
  1986.
\newblock \doi{10.1016/0095-8956(86)90055-9}.

\end{thebibliography}

\end{document}